\newcommand{\bg}{{\bar g}}
\newcommand{\Lie}{{\mathcal L}}
\newcommand{\ti}{{\tilde\imath}}
\newcommand{\tj}{{\tilde\jmath}}
\newcommand{\tk}{{\tilde k}}
\newcommand{\ta}{{\tilde \alpha}}
\newcommand{\R}{\mathbb R}
\newcommand{\wt}[1]{\widehat{#1}}
\newcommand{\tg}{\hat g}
\newcommand{\dfdx}[2]{\frac{\partial #1}{\partial #2}}
\newcommand{\Hol}[1]{Hol_p(#1)}
\newcommand{\comment}[1]{{}}
\newcommand{\wtSol}{\widetilde{Sol}}
\newcommand{\tr}{\operatorname{Tr}}
\newcommand{\const}{\operatorname{const}}
\newcommand{\Id}{\operatorname{Id}}
\renewcommand{\Im}{\operatorname{Im}}
\newcommand{\weg}[1]{}
\theoremstyle{plain}
\newtheorem{thm}{Theorem}
\newtheorem{lem}{Lemma}
\newtheorem{cor}{Corollary}
\theoremstyle{definition}
\theoremstyle{remark}
\newtheorem{rem}{Remark}
\newtheorem{ex}{Example}
\title[Degree of mobility for metrics of lorentzian  signature]{Degree of mobility for metrics of lorentzian  signature and parallel  (0,2)-tensor fields on cone 
manifolds}
\author{Aleksandra  Fedorova and Vladimir S. Matveev} 
\address{Mathematisches Institut, Friedrich-Schiller Universit\"at Jena\\
07737 Jena, Germany}  
\email{vladimir.matveev@uni-jena.de}
\begin{document}

  \begin{abstract}
  Degree of mobility of a (pseudo-Riemannian) metric is the dimension
  of the space of metrics geodesically equivalent to it. We describe
  all possible values of the degree of mobility on a simply connected
  n-dimensional manifold of lorentz signature. As an application we
  calculate all possible differences between the dimension of the
  projective and the isometry groups. One of the main new  technical
  results in the proof is the description of all parallel  symmetric $(0,2)$-tensor fields on cone manifolds of signature
  $(n-1,2)$. 
 \end{abstract}
  \maketitle
  
\section{Introduction.} 
\subsection{Main definitions and results.}

Let $(M^n,g)$ be a connected Riemannian (= $g$ is positively definite)
or pseudo-Riemannian manifold of dimension $n\ge 3$. Within the whole paper we assume that all objects are $C^\infty$-smooth.  We say that a
metric $\bar g$ on $M^n$ is \emph{geodesically equivalent} to $g$, if
every geodesic of $g$ is a (reparametrized) geodesic of $\bar g$. We
say that they are \emph{affinely equivalent}, if their Levi-Civita
connections coincide.

As we recall in Section~\ref{standard}, the set of metrics
geodesically equivalent to a given one (say, $g$) is in one-to-one
correspondence with nondegenerate solutions of the
equation~\eqref{basic}. Since the equation~\eqref{basic} is linear,
the space of its solutions is a linear vector space. Its dimension is
called the \emph{degree of mobility} of $g$ and will be denoted by
$D(g)$. Locally, the degree of mobility of $g$ coincides with the
dimension of the set (equipped by  its natural topology) of metrics
geodesically equivalent to $g$.

The degree of mobility is at least one (since $\const \cdot g$ is
always geodesically equivalent to $g$) and is at most $\tfrac{(n+1)(n+2)}{2}$,
which is the degree of mobility of simply-connected spaces of constant
sectional curvature. 

Our main result is the description of all possible values of the
degree of mobility on simply-connected manifolds of the lorentz
signature $(1,n-1)$:

\begin{thm}
  \label{th:main}
  \label{Bneq0}
  Let $(M^n, g)$, $n\geq 3$, be a connected simply-connected manifold
  of nonconstant curvature of riemannian or lorentzian  signature. Assume that there
  exists at least one metric which is geodesically equivalent to $g$,
  but is  not affinely equivalent to $g$. Then, the degree of mobility
  $D(g)$  is 
   equal to $\frac{k(k+1)}{2} + \ell$ for
  certain $0 \le k\le  n-2$ and $1 \le \ell \le \lfloor
  \tfrac{n-k+1}{3}\rfloor$.
\end{thm}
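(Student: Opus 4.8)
The plan is to translate the degree of mobility into a holonomy count on the metric cone and then to read off the admissible values from the de Rham--Wu decomposition.

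\emph{Step 1 (reduction to the cone).} By the correspondence recalled in Section~\ref{standard}, the metrics geodesically equivalent to $g$ correspond to nondegenerate solutions of~\eqref{basic}, and the full (linear) solution space of~\eqref{basic} has dimension $D(g)$. Under the standing hypothesis that some geodesically but not affinely equivalent metric exists (the case $B\neq 0$), I would pass to the cone construction: form $\hat M=M\times I$ with $\hat g=\varepsilon\,dr^2+r^2 g$, the sign $\varepsilon=\pm1$ being dictated by the sign of the constant $B$. The identity $\hat\nabla^2(r^2/2)=\hat g$ makes $\hat M$ a genuine metric cone, and the cited prolongation gives a linear isomorphism between the solution space of~\eqref{basic} and the space $\mathcal P$ of $\hat\nabla$-parallel symmetric $(0,2)$-tensor fields on $(\hat M,\hat g)$. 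For riemannian $g$ the cone has signature $(n+1,0)$ or $(n,1)$; for lorentzian $g$ it has signature $(n,1)$ or $(n-1,2)$, the last being the genuinely new case. Thus $D(g)=\dim\mathcal P$, and since $M$, hence $\hat M$, is simply connected, the holonomy principle yields $D(g)=\dim\big(\operatorname{Sym}^2 W^\ast\big)^{H}$, where $W=T_p\hat M$ and $H=\Hol{\hat g}$.

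\emph{Step 2 (splitting off the flat part).} I would apply the pseudo-Riemannian de Rham--Wu decomposition $W=W_0\oplus W_1\oplus\cdots\oplus W_s$ into $H$-invariant, mutually orthogonal, nondegenerate subspaces, where $W_0$ is the space of $H$-fixed vectors and each $W_i$ ($i\ge1$) is weakly irreducible without nonzero fixed vectors; correspondingly $H=\{e\}\times H_1\times\cdots\times H_s$ acts as a product. First one checks that an $H$-invariant symmetric form has no component pairing $W_0$ with any $W_i$ (such a component would be an $H$-fixed vector of $W_i$, hence zero) and none pairing distinct $W_i,W_j$ (these are inequivalent as modules of the product group $H$). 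Hence the count splits as $D(g)=\tfrac{k(k+1)}2+\ell$, with $k=\dim W_0$ and $\ell=\sum_{i=1}^s \dim\big(\operatorname{Sym}^2 W_i^\ast\big)^{H_i}$. Nonconstant curvature of $g$ forces $\hat g$ to be non-flat, so $s\ge1$ and $\ell\ge1$; once each nontrivial factor is shown to have dimension at least $3$, we also obtain $k\le(n+1)-3=n-2$.

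\emph{Step 3 (the key estimate, and the main obstacle).} The heart of the argument is to bound $\dim\big(\operatorname{Sym}^2 W_i^\ast\big)^{H_i}$ by $\tfrac13\dim W_i$ for every nontrivial factor. Here one uses crucially that $\hat g$ is a cone: by the structure theorem for de Rham--Wu factors of a metric cone (each such factor is again a cone), a factor of dimension $2$ would be a cone over a $1$-dimensional base and therefore flat, contradicting $W_i\subset W\ominus W_0$; hence $\dim W_i\ge3$. In the definite case each $H_i$ acts irreducibly, so by Schur the only invariant symmetric form is a multiple of $\hat g|_{W_i}$, the factor contributes exactly $1$ to $\ell$, and the estimate is immediate. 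The genuinely new work is the indefinite case, above all the signature $(n-1,2)$ cones, where $H_i$ may be only weakly irreducible and $\big(\operatorname{Sym}^2 W_i^\ast\big)^{H_i}$ may have dimension $d_i>1$. I would classify these weakly irreducible holonomies---the restriction to at most two negative directions severely limits the indecomposable types and their invariant tensors---and verify case by case that the supporting factor then has dimension at least $3d_i$. Summing gives $\ell=\sum_i d_i\le\tfrac13\sum_i\dim W_i=\tfrac13(n+1-k)$, i.e.\ $\ell\le\lfloor\tfrac{n-k+1}3\rfloor$, which with Step~2 yields the asserted range $0\le k\le n-2$, $1\le\ell\le\lfloor\tfrac{n-k+1}3\rfloor$. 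I expect this classification of parallel symmetric $(0,2)$-tensors on weakly irreducible cone factors of signature $(n-1,2)$ to be the decisive and most laborious point: it is precisely the new technical result advertised in the abstract, and the factor $3$ (hence the one-third floor) is forced by the minimal size of such indecomposable blocks.
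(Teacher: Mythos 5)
The proposal has genuine gaps at two of its three steps. The most serious is in Step~1: you identify the standing hypothesis (existence of a geodesically but not affinely equivalent metric) with ``the case $B\neq 0$''. This is false. The constant $B$ of Theorem~\ref{thm:mg+Ba} is only defined when $D(g)\ge 3$ (the subcase $D(g)=2$ needs the separate, trivial remark that $2=\tfrac{1\cdot 2}{2}+1$ is in the list), and even when it is defined it can vanish while $g$ still admits geodesically-but-not-affinely equivalent metrics; this happens for lorentzian metrics carrying a parallel isotropic $\lambda_i$. The paper spends two entire sections on $B=0$: when some solution has $\mu\neq 0$, one cannot build a cone over $g$ itself but must first produce, on open subsets with compact closure, a geodesically equivalent $\bar g$ with $B(\bar g)\neq 0$ (Lemmas~\ref{thm:newB}, \ref{thm:mu_neq_0}) and then pass to the whole simply connected manifold (Lemma~\ref{thm:noncompact}); when every solution has $\mu=0$, the cone picture is unavailable in principle and a completely different argument is needed (Lemma~\ref{thm:B=0_isotropic}, via the Jordan form of $a^i_j$ and the Splitting Lemma), which in fact yields a strictly smaller list of values. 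Your proof covers none of this.

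Step~2 is also wrong in exactly the signatures that matter. A weakly irreducible factor $W_i$ of an indefinite metric can contain nonzero $H$-fixed vectors: they are necessarily null, their span is degenerate, and so they cannot be split off into $W_0$; one cannot normalize the de Rham--Wu decomposition so that the $W_i$ are ``without nonzero fixed vectors''. Consequently the mixed blocks need not vanish --- Schur's lemma does not apply to weakly irreducible (as opposed to irreducible) modules, and the pairing of $W_0$ (or of $W_j$) with a fixed null direction of $W_i$ gives a nonzero invariant form. This is precisely the content of Lemma~\ref{thm:nondiag}, and it is why in Theorems~\ref{cov}, \ref{cov1} the number $k$ counts \emph{all} holonomy-invariant $1$-forms, not $\dim V_0$. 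Numerically, your formula $D(g)=\tfrac{k_0(k_0+1)}{2}+\sum_i d_i$ with $k_0=\dim W_0$ undercounts $D(g)$ by the cross terms (already by $k_0$ when a single factor carries one invariant null form), so the identity you propose to prove is false as stated. Finally, Step~3 is a plan rather than a proof: the classification of weakly irreducible cone holonomies in signature $(n-1,2)$ and the case-by-case bound $\dim W_i\ge 3d_i$ are left entirely undone, and the paper deliberately avoids any such classification. Instead it proves the pointwise algebraic identity $L^i_pR^p_{\ jkl}=0$ for a parallel selfadjoint $L$ on an indecomposable cone factor (using the Jordan normal form of Theorem~\ref{thm:pairofmatrices}, the fact that the signature allows at most two nontrivial Jordan blocks, and the cone vector field $\vec v$ with $L^r\vec v\neq 0$), deduces from Lemma~\ref{thm:kernelR} that each factor contributes only $\const\cdot g_i$ modulo squares of invariant $1$-forms, and obtains $\ell\le\lfloor\tfrac{n-k+1}{3}\rfloor$ from the estimate that each indecomposable factor has dimension at least three more than the number of fixed vectors it contains.
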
 

In the theorem above the brackets ``$\lfloor \ , \ \rfloor$'' mean the
integer part.

\begin{thm}
  \label{thm:realization} 
  For any $n\geq 3$, $0\leq k\leq n-2$ and $\ell \in \{1,...,\tfrac{n-k+1}{3} \}$ such that $\tfrac{k(k+1)}{2}+\ell\geq 2$
  there exists a Lorentzian metric $g$
  on $\R^n$ such that it admits a metric $\bar g$ that is geodesically
  equivalent, but not affinely equivalent to $g$,  and such that  
  $D(g)=\frac{k(k+1)}{2}+\ell$.
\end{thm}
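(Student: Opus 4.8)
The plan is to prove the theorem constructively: for each admissible triple $(n,k,\ell)$ I would write down an explicit Lorentzian metric $g$ on $\mathbb{R}^n$ and compute its degree of mobility through the cone construction recalled in Section~\ref{standard}. Recall that $D(g)$ equals the dimension of the space of parallel symmetric $(0,2)$-tensor fields on the metric cone $(\hat M,\hat g)$ over $(M,g)$, a pseudo-Riemannian manifold of signature $(n-1,2)$, and that by the holonomy principle this space is isomorphic to the space of $\mathrm{Hol}_p(\hat g)$-invariant symmetric bilinear forms on $T_p\hat M$. Thus the entire problem reduces to exhibiting a cone metric whose holonomy representation admits exactly $\tfrac{k(k+1)}{2}+\ell$ linearly independent invariant symmetric forms, and which is the cone over a Lorentzian metric of nonconstant curvature carrying a non-affine geodesically equivalent partner.

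The structural idea is to prescribe the de Rham--Wu decomposition of $T_p\hat M$ into holonomy-indecomposable orthogonal summands
\[
T_p\hat M = \mathbb{R}^k \oplus W_1 \oplus \cdots \oplus W_\ell ,
\]
where $\mathbb{R}^k$ carries the trivial holonomy action and each $W_i$ is an indecomposable, non-irreducible module of the type occurring for cones of signature $(n-1,2)$. On the trivial summand every symmetric form is invariant, contributing $\tfrac{k(k+1)}{2}$; choosing the $W_i$ to be pairwise non-isomorphic, each admitting only its own induced metric as an invariant form and possessing no invariant vectors (so that no cross terms between $\mathbb{R}^k$ and the blocks, nor between distinct blocks, inflate the count), the blocks contribute exactly $\ell$. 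Since the minimal dimension of such an indecomposable block is $3$, realizing the decomposition forces a partition of the non-flat part $\dim W = n+1-k$ as $d_1+\cdots+d_\ell$ with each $d_i\ge 3$, which is possible precisely when $3\ell\le n+1-k$; this is exactly the constraint $\ell\le\lfloor\tfrac{n-k+1}{3}\rfloor$ in the statement.

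To realize this decomposition by an honest cone over a single base metric, I would write $g$ in a block-product normal form in the spirit of Levi-Civita's description: a constant-curvature block producing the flat $\mathbb{R}^k$ factor of the cone, together with $\ell$ explicit building-block metrics whose individual cones realize the indecomposable summands $W_i$. Each building block should be of a degenerate, pp-wave type chosen so that the corresponding solution of~\eqref{basic} is nondegenerate but not affine. One then verifies directly that the assembled cone has the prescribed holonomy decomposition, that it is not flat (so $g$ has nonconstant curvature), and that the extra parallel tensor arising from one of the $W_i$ yields a geodesically equivalent metric whose associated function is nonconstant, hence non-affinely equivalent.

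The main obstacle is this last, geometric, verification rather than the linear algebra of invariant forms: one must produce genuine block metrics whose cones are simultaneously indecomposable with a one-dimensional space of invariant symmetric forms, pairwise non-isomorphic so that no unintended cross terms appear, and of the precise degenerate type giving a non-affine rather than merely affine equivalence. Controlling the holonomy so that it is neither too small, which would create extra invariant forms and push $D(g)$ above the target, nor reducible in an unintended way, is the delicate point; I expect to handle it by drawing the blocks from the explicit list underlying the paper's classification of parallel $(0,2)$-tensors on cones of signature $(n-1,2)$, and to treat the boundary cases $k=0$, small $\ell$, and the exclusion forced by $\tfrac{k(k+1)}{2}+\ell\ge 2$ separately.
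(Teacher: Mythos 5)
Your overall strategy coincides with the paper's: build the $(n+1)$-dimensional cone as an orthogonal product of a flat factor $\R^k$ and $\ell$ indecomposable cone blocks of dimensions $k_i\ge 3$ with $k_1+\dots+k_\ell=n-k+1$ (the product is again a cone by Lemma~\ref{thm:glue}), count the holonomy-invariant symmetric forms via Theorem~\ref{cov} to get $\tfrac{k(k+1)}{2}+\ell$, and take $g$ to be the induced metric on a level set $\{v=\const\}$. The dimension count and the constraint $3\ell\le n-k+1$ match the paper exactly. Two side remarks: your requirement that the blocks be pairwise non-isomorphic is superfluous, since by Lemma~\ref{thm:nondiag} all off-diagonal components of an invariant form are built from invariant vectors and $1$-forms, so the absence of invariant vectors in the blocks already kills every cross term; and the cone in this realization has Lorentzian signature $(1,n)$, not $(n-1,2)$ (the latter arises elsewhere in the paper only after multiplying by $-1/B$ when $B>0$).

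However, there is a genuine gap precisely where you locate ``the main obstacle'': you never produce the building blocks, and your plan for obtaining them would fail. You propose to draw them ``from the explicit list underlying the paper's classification of parallel $(0,2)$-tensors on cones of signature $(n-1,2)$'', but no such list exists -- Theorem~\ref{cov} is a structure theorem about invariant forms, not a classification of indecomposable cone holonomies. Moreover, your insistence that the blocks be of ``degenerate, pp-wave type'' so as to force a non-affine (rather than affine) equivalence misidentifies where non-affineness comes from. On a cone the relevant system is \eqref{eq:mg-a} with $B=-1\neq 0$ (Lemma~\ref{thm:two_cases}, Theorem~\ref{thm:parallel}); hence any parallel symmetric tensor $A$ on the cone that is not proportional to $\hat g$ induces a solution $(a,\lambda,\mu)$ of \eqref{basic} with $\lambda_i\not\equiv 0$, because $\lambda\equiv 0$ forces, via the second and third equations of \eqref{eq:mg-a}, $a=\const\cdot g$; nondegeneracy is then arranged by passing to $a+t\,g$. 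So the non-affine partner is automatic as soon as the space of parallel tensors has dimension at least $2$, and no special degenerate blocks are needed. Indeed, the paper's blocks are the simplest imaginable: $(\wt M_0,\hat g_0)=(\R^k,g_{euclidean})$ (the cone over the round sphere $S^{k-1}$), one cone over the flat Lorentzian $\R^{k_1-1}$, and cones over flat Euclidean $\R^{k_i-1}$ for $i\ge 2$. The essential verification -- that each such block admits no parallel symmetric $(0,2)$-tensor besides $\const\cdot\hat g_s$, hence in particular no parallel vector fields -- is done by checking that the curvature tensor and its covariant derivative generate all of $so(T_p\wt M_i,\hat g_i)$ at each point, with \cite[Theorem 4.1]{Cortes2009} covering the Riemannian blocks. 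This explicit choice and this verification constitute the actual content of the paper's proof, and they are exactly what your proposal leaves open.
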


The condition $\frac{k(k+1)}{2}+\ell\geq 2$  in Theorem  \ref{thm:realization} is due to our assumption that 
there exists a metric $\bar g$ that is geodesically
  equivalent, but not affinely equivalent to $g$. Actually, a generic metric $g$  does not admit such a  metric, and in fact has $D(g)= 1$, see \cite{hall}.

\begin{figure} 
    \caption{Degree of mobility for low dimensions}
  \includegraphics[scale=0.7]{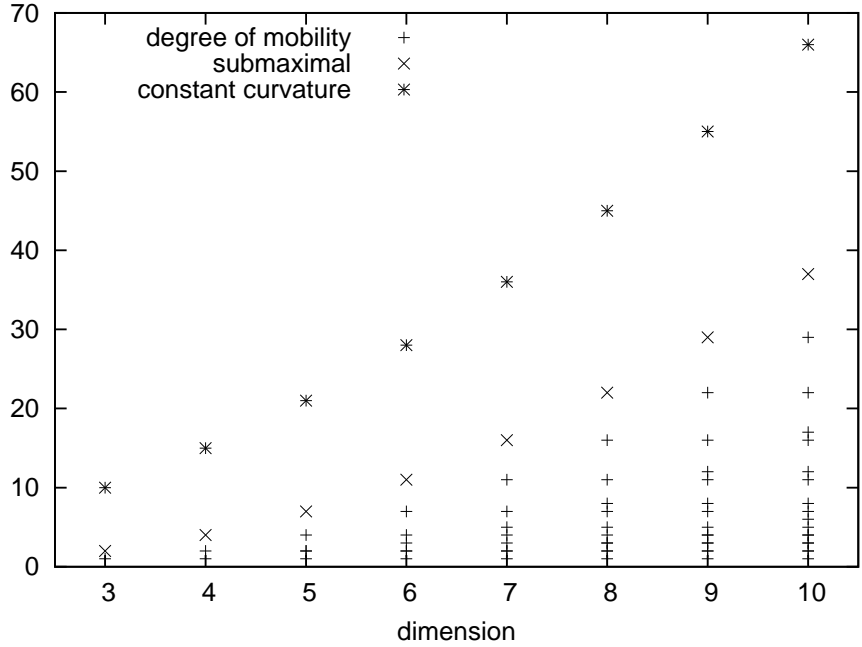} 
\end{figure}

The Riemannian version of Theorem~\ref{th:main} is known and is due to \cite{shandra,KMMS}: the principle idea is due to~\cite{shandra},  but the main result has a mistake which was corrected in~\cite{KMMS}.

We see that  the biggest degree of mobility of a metric of a nonconstant
curvature on a simply-connected manifold is $\tfrac{(n-1)(n-2)}{2} +
1= \tfrac{(n - 3) n }2+2$.  This value is known to be the
``submaximal'' value for metrics of all signatures, see~\cite[\S
1.2]{mikes} and \cite[Theorem 6.2]{dissertation_kiosak}. As we mentioned above, the  maximal value of the degree of mobility of a metric
on an $n$-dimensional manifold is achieved on simply-connected
manifolds of  constant sectional curvature and is equal to
$\tfrac{(n+2)(n+1)}{2}$.

Let us now comment on our assumptions in Theorem~\ref{th:main}. The
assumption that the manifold is simply-connected is important: the
degree of mobility of an isometric quotient can 
be smaller than the degree of mobility of the initial manifold.  For
example, for certain isometric quotients of the round 3-sphere, the
degree of mobility could be one,~\cite{topology}.   The assumption $n\ge 3$ is also important: the
dimension $n=2$ was studied already by Darboux~\cite{Darboux} and
Koenigs~\cite{konigs}, see also \cite{duna,kruglikov}. They have shown that in dimension $n=2$ the
degree of mobility of an arbitrary metric of nonconstant curvature  is 1,2,3,4. We see that the
list of possible degrees of mobility in dimension 2 is very different
from the list obtained by   the formula $D(g)= \frac{k(k+1)}{2}
+ \ell$ from Theorem~\ref{th:main}.

The assumption that there exists a metric that is geodesically equivalent to $g$ but not affinely equivalent to $g$ is also important since one can  construct examples of metrics (of arbitrary signature) 
on $\mathbb{R}^n$  with the degree of mobility equal to $\tfrac{(n-4)(n-3)}{2} + 2$, and for  $n\ge  5 $ 
this number is not in the list  of degrees of mobility given by Theorem \ref{th:main}. Of course, in the lorentzian signature, all metrics geodesically equivalent to the metrics from  these examples are affinely equivalent to them.

Unfortunately, we do not know whether the assumption that the metric
has riemannian or lorentzian signature is important. In dimension $n=3$,  all metrics
have, up to multiplication by $-1$, the riemannian or lorentzian
signature. In dimension $n=4$ one can show that the statement of our
theorem still holds (was essentially done in \cite{dissertation_kiosak}). 
Our proof does not work for metrics of other signatures though: we construct examples showing that one of the main tools of the proof, Theorem \ref{cov}, is wrong if the initial metric has other signatures.  

\subsection{Application to the dimension of the projective algebra.} 

A vector field whose (local) flow takes unparameterized geodesics to
geodesics is called a \emph{projective vector field}. Projective vector
fields satisfy the equation~\eqref{apl} in Section~\ref{standard}.
Since the equation~\eqref{apl} is linear, the space of its solutions
is a linear vector space, we will denote it by  $proj(g)$. Since every
killing vector field (i.e., a vector field whose flow acts by (local)
isometries) is evidently a projective vector field, the set of the
Killing vector fields which we denote by $iso(g)$ forms a vector
subspace of $proj(g)$.

\begin{thm}
  \label{th:apl}
  Let $(M^n, g)$ be a connected simply-connected $n\ge 3$-dimensional manifold of
  nonconstant curvature of riemannian or lorentzian  signature. Assume that $D(g)\geq
  3$ and that there exists at least one metric which is geodesically
  equivalent to $g$, but not affinely equivalent to $g$. Then,
  $$\dim proj(g)-
  \dim iso(g) = \frac{k(k+1)}{2} + \ell-1$$ for certain $k\in
  \{0,1,...,n-2\}$ and $1 \le \ell \le \lfloor
  \tfrac{n-k+1}{3}\rfloor$.
\end{thm}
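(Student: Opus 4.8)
The plan is to first isolate the quantitative heart of the statement, namely the relation
\begin{equation*}
\dim proj(g)-\dim iso(g)=D(g)-1,
\end{equation*}
and then simply to read off the right-hand side from Theorem~\ref{th:main}: under the present hypotheses it gives $D(g)=\tfrac{k(k+1)}{2}+\ell$ with $0\le k\le n-2$ and $1\le\ell\le\lfloor\tfrac{n-k+1}{3}\rfloor$, so that the difference equals $\tfrac{k(k+1)}{2}+\ell-1$. Hence the whole task reduces to the displayed relation, which I would establish on the metric cone $(\hat M,\hat g)$ over $(M,g)$ that already underlies the computation of the degree of mobility. Recall (from the standard set-up of Section~\ref{standard}) that solutions of~\eqref{basic} are in bijection with $\hat\nabla$-parallel symmetric $(0,2)$-tensor fields on the cone; write $\mathcal P$ for the latter space, so that $\dim\mathcal P=D(g)$.

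On the cone I would use the standard dictionary between the projective geometry of $g$ and the affine geometry of $\hat g$: projective vector fields of $g$ correspond to affine (connection-preserving) vector fields of $\hat g$, the single additional affine field being the radial homothety (Euler) field $E$ with $\Lie_E\hat g=2\hat g$; and Killing fields of $g$ correspond exactly to Killing fields of $\hat g$. Thus $\dim aff(\hat g)=\dim proj(g)+1$ and $\dim iso(\hat g)=\dim iso(g)$. Here the exclusion of constant curvature is essential — it is what makes the cone non-flat and so forbids the extra affine fields (translations) that a flat cone would carry — while the hypothesis $D(g)\ge 3$ is what lets me invoke the structural results guaranteeing that there are no further unexpected affine or Killing fields. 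Now consider the linear map
\begin{equation*}
\Psi\colon aff(\hat g)\longrightarrow \mathcal P,\qquad w\longmapsto \Lie_w\hat g.
\end{equation*}
For affine $w$ the tensor $\Lie_w\hat g$ is indeed $\hat\nabla$-parallel, and it vanishes precisely when $w$ is Killing; therefore $\ker\Psi=iso(\hat g)$ and $\dim aff(\hat g)-\dim iso(\hat g)=\dim\Im\Psi$.

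The crux, and what I expect to be the main obstacle, is to show that $\Psi$ is \emph{surjective}, i.e.\ that every $\hat\nabla$-parallel symmetric $(0,2)$-tensor is a Lie derivative of $\hat g$ along an affine field; granting this, $\dim\Im\Psi=\dim\mathcal P=D(g)$, and combining with the two identifications above yields $\dim proj(g)+1-\dim iso(g)=D(g)$, which is the required relation. In positive-definite signature surjectivity is easy: a parallel symmetric tensor is $\hat g$-self-adjoint, hence simultaneously diagonalizable with $\hat g$, so by the de Rham decomposition it is a constant multiple of the metric on each parallel factor, and such a block rescaling is exactly $\Lie_w\hat g$ for the affine field $w$ acting as a radial homothety on that factor. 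In signature $(n-1,2)$ the self-adjoint parallel endomorphism attached to a parallel tensor need not be diagonalizable and may carry nilpotent and off-diagonal parts coming from indecomposable but non-irreducible holonomy; realizing those by affine fields is the delicate point and requires the explicit normal forms supplied by our description of parallel symmetric $(0,2)$-tensor fields on cones of this signature. This is also precisely where the signature enters in an essential way, consistent with the fact that the corresponding tool (Theorem~\ref{cov}) fails for other signatures. Once surjectivity is in hand the relation $\dim proj(g)-\dim iso(g)=D(g)-1$ follows, and Theorem~\ref{th:main} completes the proof.
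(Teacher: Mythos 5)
Your reduction of the theorem to the single identity $\dim proj(g)-\dim iso(g)=D(g)-1$ is where the proposal breaks down, for two independent reasons. First, that identity is not available in all cases covered by the theorem. The paper's proof splits into cases according to the constant $B$ of Theorem~\ref{thm:mg+Ba}. When $B\neq 0$ (and when $B=0$ but some solution has $\mu\neq 0$, which reduces to $B\neq 0$ via Lemma~\ref{thm:mu_neq_0} and Lemma~\ref{thm:noncompact-proj-iso}), the exact identity does hold, and is proved by a direct computation on $M$ itself: the map $\phi(v)=[a^v]$ of Lemma~\ref{thm:proj/hom} has kernel $iso(g)$ (a homothety rescales $B$, so it must be an isometry) and is surjective because $v^i=\tfrac{1}{2B}\lambda^i$ maps to $[a]$ --- a formula that visibly requires $B\neq 0$. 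In the remaining ``special'' case ($B=0$ and every solution has $\mu=0$) the paper establishes only the two-sided bound $D(g)-2\leq \dim proj(g)-\dim iso(g)\leq D(g)-1$ and does not (and need not) decide which value occurs: Theorem~\ref{th:apl} still follows because Lemma~\ref{thm:B=0_isotropic} shows that in this case $D(g)=\tfrac{k(k+1)}{2}+\ell'$ with $\ell'\geq 2$, so subtracting either $1$ or $2$ still lands in the allowed list. Your proposal asserts the exact identity in all cases; this is strictly stronger than what the paper proves, no argument for it is offered, and its truth in the special case is left open even by the paper.

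Second, the cone dictionary you invoke is not valid without the hypothesis $B\neq 0$. The bijection between solutions and parallel symmetric $(0,2)$-tensor fields on the cone (Theorem~\ref{thm:parallel}) concerns the system \eqref{eq:mg-a}, i.e.\ the extended system with $B=-1$; to pass from \eqref{basic} to that system one must first renormalize via Lemma~\ref{thm:two_cases}, which divides by $B$. When $B=0$ there is no such correspondence, and $\dim\mathcal P\neq D(g)$ in general, so your starting point $\dim\mathcal P=D(g)$ already fails in exactly the case that causes the trouble. Finally, even within the case $B\neq 0$, your argument defers its central step --- surjectivity of $\Psi\colon aff(\hat g)\to\mathcal P$, together with the identifications $\dim aff(\hat g)=\dim proj(g)+1$ and $\dim iso(\hat g)=\dim iso(g)$ --- to ``the explicit normal forms,'' i.e.\ it is announced rather than proved; the paper avoids all of this by exhibiting the preimage $\tfrac{1}{2B}\lambda^i$ explicitly, with no cone needed at this stage. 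So the proposal is not a complete proof, and its strategy cannot be completed as stated: the correct argument requires the case analysis in $B$, and in the special case one must replace the exact identity by the two-sided bound combined with the restricted range of $\ell$ from Lemma~\ref{thm:B=0_isotropic}.
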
 

In the case of $D(g)=2$, we prove that 
the codimension of the space of homothety vector fields  (i.e., such that $\mathcal{L}_v g = \const g$) in the space
of projective fields is at most 1, see Lemma \ref{thm:proj/hom} is Section \ref{cod}. 

Note that the number $\dim proj(g)-
  \dim iso(g)$ is a natural number in the projective geometry. Indeed, since Knebelman \cite{knebelman,beltrami_short} it is known that if $\bar g$ is geodesically equivalent to $g$ then $\dim iso(g)= \dim iso(\bar g)$. Since evidently  $\dim proj(g)= \dim proj(\bar g)$, we have    $\dim proj(g)-
  \dim iso(g)= \dim proj(\bar g)-
  \dim iso(\bar g)$.

Note that there is almost no hope to obtain the possible dimensions of
$iso(g)$ (for manifolds of all dimensions), since the possible values of $\dim
iso(g)$ for homogeneous manifolds give too many combinatorical 
possibilities.  For every fixed dimension, it can be in principle done
though.  Moreover, as examples show, the lists of possible dimensions
of $iso(g)$ on a simply-connected $n$-dimensional manifold depend on
the signature of $g$ and, for certain $n$, are different for
Riemannian and Lorentzian metrics.

\subsection{Overview of known global results.} 

If the manifold $(M,g)$ is closed or the metrics are  complete,  the  natural analog of Theorem  \ref{th:main}   was known before and  is true for metrics of all signature: by \cite[Theorem 1]{KM}, if two complete metrics $g$ and $\bar g$  of nonconstant curvature on a $n\ge 3$-dimensional manifold  are geodesically equivalent but not affinely equivalent,   then $D(g)= 2$. 
By    \cite[Corollary 5.2]{Mounoud2010}, if two  metrics $g$ and $\bar g$  of nonconstant curvature on a closed  $n\ge 3$-dimensional manifold  are geodesically equivalent but not affinely equivalent,   then $D(g)=2$. 
If we merely  assume that 
 the  metric  $g$  is complete,  then, in the riemannian and in the 
 lorentzian case, the list of the degrees of mobilities  (on connected simply-connected manifolds) coincides with that of in Theorem \ref{th:main}.

\subsection{ Relation to parallel 
  symmetric $(0,2)$-tensor fields and  difficulties of the  lorentzian signature.} \label{plan}

The \emph{cone manifold} over $(M,g)$ is the manifold $\wt
M=\mathbb{R}_{>0}\times M$ endowed with the metric $\hat g$ defined by
$\hat g=\,dr^2+ r^2 g$ (i.e., in the local coordinate system
$(r,x^1,...,x^n) $ on $\wt M$, where $r$ is the standard coordinate
on $\R_{>0}$, and $(x^1,...,x^n)$ is a local coordinate system on $M$,
the scalar product in $\hat g$ of the vectors $u= u^0 {\partial_r} +
\sum_{i=1}^n u^i {\partial_{ x^i} } $ and $v= v^0 {\partial_r} +
\sum_{i=1}^n v^i {\partial_{x^i} } $ is given by $\hat g(u,v)= u^0
v^0+ r^2\sum_{i,j=1}^n g_{ij} u^i v^j$).
 
The degree of  mobility of metrics on $n$-dimensional manifold
appears to be closely related to the dimension of the space of
parallel  symmetric $(0,2)$-tensor fields of $n+1$-dimensional
cone manifolds. We will explain what we mean by ``closely related'' in
Section~\ref{standard10}.  For Riemannian metrics, this observation was
essentially known to Solodovnikov~\cite{s1} and was used
by Shandra in~\cite{shandra}, where, as we mentioned above, the Riemannian
version of our main Theorem~\ref{th:main} was essentially proved. In~\cite{KM},
the result of Solodovnikov was extended for all signatures, which
allows us to use it in our problem.  The assumption that the metric
$g$ has lorentzian signature $(1,n-1)$ implies that the (metric of the)
cone manifold which is used in the proof of Theorem~\ref{th:main} has
signature $(1,n)$ or $(n-1, 2)$.
  
In view of this relation between geodesically equivalent metrics and
parallel symmetric $(0,2)$-tensor fields, the following
statement is closely related to Theorem~\ref{th:main}:
\begin{thm}
  \label{cone} 
  Let $(\wt M^{n+1}, \hat g)$ be a connected simply-connected nonflat
  cone manifold of signature $(0,n+1)$, $(1,n)$ or $(n-1, 2)$.  Then,
  the dimension of the space of parallel symmetric
  $(0,2)$-tensor fields is $\frac{k(k+1)}{2} + \ell$, where $k $ is the
  dimension of the space of parallel vector fields, and $1 \le \ell
  \le \lfloor \tfrac{n-k+1}{3}\rfloor$.
\end{thm}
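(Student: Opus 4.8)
The plan is to exploit the de Rham--Wu decomposition together with the special structure of cone manifolds. The key external facts about cone metrics are: a cone $(\wt M^{n+1},\hat g)$ over $(M^n,g)$ is flat if and only if $(M,g)$ has constant curvature $1$, and more importantly the holonomy of a cone is constrained by Gallot's theorem and its pseudo-Riemannian analogues. First I would appeal to the correspondence, which we may use from the earlier parts of the paper, between parallel symmetric $(0,2)$-tensor fields and the holonomy representation: a parallel $(0,2)$-tensor field is precisely a $\hat g$-symmetric endomorphism commuting with the holonomy group $\Hol{\hat g}$, so the dimension we want equals $\dim$ of the space of self-adjoint holonomy-invariant endomorphisms of the tangent space at a point.

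Next I would decompose the tangent space into holonomy-irreducible blocks via de Rham--Wu. The flat part contributes the parallel vector fields: if there are $k$ linearly independent parallel vector fields, the maximal flat factor is a $k$-dimensional (necessarily nondegenerate, by the signature hypotheses) subspace, and self-adjoint invariant endomorphisms of this $\R^k$ piece form a space of dimension $\tfrac{k(k+1)}{2}$, since the holonomy acts trivially there and self-adjointness with respect to the induced form cuts the $k^2$ endomorphisms down to the symmetric ones. This accounts for the first summand. The remaining, genuinely non-flat indecomposable factors each contribute to the extra term $\ell$, and the content of the theorem is that each such factor contributes at most a small constant and that there can be at most $\lfloor\tfrac{n-k+1}{3}\rfloor$ of them.

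For the bound on $\ell$ the crucial input is a structural restriction on the indecomposable non-flat cone factors in signature $(1,n)$ or $(n-1,2)$. In the Riemannian (definite) case each indecomposable nonflat factor is irreducible, so by Schur its commutant is $\R$ or $\C$ or $\Hbb$; self-adjointness leaves only the real multiples of the identity, contributing $1$ to $\ell$ per factor, and since a nonflat cone factor over an irreducible base has dimension at least $3$ the count of such factors is bounded by $\lfloor\tfrac{n-k+1}{3}\rfloor$. I would carry over exactly this counting. The delicate case is the Lorentzian signature $(n-1,2)$ on the cone, where the relevant indecomposable holonomy representations may be non-irreducible (they can have an invariant isotropic subspace), so Schur's lemma no longer applies directly and one must classify the possible commutants and the self-adjoint elements therein by hand.

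I expect the main obstacle to be precisely this signature $(n-1,2)$ analysis: one must show that even when an indecomposable factor fails to be irreducible, the space of self-adjoint invariant endomorphisms contributes the right amount to $\ell$ while the underlying factor still occupies dimension at least $3$, so that the bound $\ell\le\lfloor\tfrac{n-k+1}{3}\rfloor$ survives. This is exactly where the promised description of all parallel symmetric $(0,2)$-tensor fields on cone manifolds of signature $(n-1,2)$ enters, and I would isolate that classification as a separate lemma, proving it via the cone-specific holonomy restrictions (the $\hat g$-parallel radial field $r\partial_r$ and the resulting obstruction to a nondegenerate flat factor in the $(n-1,2)$ case), and then feed it into the decomposition argument to finish both the lower and upper estimates on $\ell$.
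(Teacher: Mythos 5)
Your overall route---identify parallel symmetric $(0,2)$-tensor fields with holonomy-invariant self-adjoint endomorphisms, decompose the tangent space \`a la de Rham--Wu, and count the contributions of the flat and non-flat factors---is the same as the paper's, but two of your steps are genuinely broken. First, your claim that the $k$ parallel vector fields span the maximal flat factor and that this span is ``necessarily nondegenerate, by the signature hypotheses'' is unjustified and is exactly what the paper warns against right after formula \eqref{1b}: ``In the other signatures, there may exist invariant one-forms on $T_p\wt M$ that are not 1-forms on $V_0$.'' In signatures $(1,n)$ and $(n-1,2)$ a parallel vector field may be isotropic and lie \emph{inside} an indecomposable non-flat block. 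This is not cosmetic: once invariant vectors are allowed inside the blocks $V_\alpha$ (say $r_\alpha$ of them in $V_\alpha$), the dimension count needs the sharper inequality $\dim V_\alpha\ge r_\alpha+3$, so that $n+1\ge k+3\ell$ with $k=\dim V_0+r_1+\dots+r_\ell$, and not merely $\dim V_\alpha\ge 3$; moreover the $\tfrac{k(k+1)}{2}$ term must then come from quadratic combinations of \emph{all} invariant $1$-forms, including off-diagonal terms coupling $V_0$ with isotropic invariant directions inside the $V_a$'s and coupling different $V_a$'s to each other---contributions which your block-by-block count omits entirely (the paper handles them separately in Lemma \ref{thm:nondiag}).

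Second, you correctly locate the heart of the theorem---describing the invariant self-adjoint endomorphisms of an indecomposable but non-irreducible factor---but you do not prove it; you only promise a lemma, and the mechanism you name for proving it does not exist: the radial field $r\partial_r$ is \emph{not} parallel on a cone (it satisfies $\wt\nabla_X(r\partial_r)=X$; it is the gradient of $v=r^2/2$ with $v_{,ij}=\hat g_{ij}$), and if it were parallel every cone would have $k\ge 1$. The paper's replacement for Schur's lemma is a long, cone-specific argument: the normal form for $g$-self-adjoint endomorphisms (Theorem \ref{thm:pairofmatrices}) together with the signature restriction bounds the nilpotent part of $L$ to at most two nontrivial Jordan blocks; the commutation relation \eqref{ricci} combined with the fact that the (non-parallel) cone field $\vec v$ has maximal height for $L$ yields $L^i_pR^p_{\ jk\ell}=0$ (Section \ref{sec:two_blocks}); complex eigenvalues are excluded in Section \ref{sec:complex}; and Lemma \ref{thm:kernelR} then converts this identity into parallel vector fields spanning the image of $L$. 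Example \ref{ex2} exhibits a cone metric carrying a parallel nilpotent self-adjoint $L$ with three Jordan blocks and $L^i_pR^p_{\ jkm}\ne 0$, so the step you defer really can fail once the hypotheses are relaxed; no soft holonomy argument of the kind you sketch can substitute for it.
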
 
 
Though Theorem \ref{cone} provides one  of the main steps  in the proof of Theorem \ref{th:main}, 
it is  not equivalent to Theorem \ref{th:main}. 
Actually, Theorem~\ref{cone} follows from Theorem~\ref{th:main}, modulo certain results of \cite{KM,Mounoud2010} we recall in Section \ref{sec:Bneq0}. If the
signature of $g$ is riemannian, Theorem~\ref{th:main} follows from
Theorem~\ref{cone}, though this implication needs additional work which will be  essentially done in Section \ref{sec:mu_neq_0}.

If $g$ has lorentzian  signature, proof of  Theorem~\ref{th:main} splits
into two parts: the first ``generic'' part is based on  
Theorem~\ref{cone} and the second part (``special case'') is   Lemma~\ref{thm:B=0_isotropic}.
 
Let us now explain two main steps in the proof of Theorem~\ref{cone},
which are Theorems~\ref{cov},~\ref{cov1} below. Besides providing an important step in the proof of Theorem \ref{th:main}, Theorem~\ref{cov}  could be interesting on its   own since investigation of parallel  tensor fields  on cone manifolds  is a classical topic, see  for example \cite{Cortes2009,Ga, Mounoud}. 

 Fix a point $p\in \wt
M$ (where $(\wt M, \hat g)$ is a simply-connected cone
manifold). Consider the holonomy group $\Hol{\hat g} \subset SO(T_p\wt
M, \hat g_{p})$ of the metric $\hat g$.  Consider the decomposition of
$T_p\wt M$ in the direct product of  mutually orthogonal $\hat
g$-nondegenerate subspaces invariant w.r.t. the action of the holonomy
group
\begin{equation}
  \label{decomposition} 
  T_p\wt M = V_0 \oplus V_1\oplus ... \oplus V_\ell.  
\end{equation} 
We assume that $V_0$ is flat in the sense that the holonomy group acts
trivially on $V_0$, and that the decomposition is maximal in the sense
that for all $i=1,...,\ell$ it is not possible to decompose $V_i$ into
the direct product of two nontrivial (i.e., of dimension $\ge 2$)
$\hat g$-nondegenerate subspaces invariant w.r.t. the action of the
holonomy group.  We allow $\dim(V_0)= 0$ but assume $\dim(V_i)\ge 2$
for $i\ne 0$.
  
It is well known that parallel  symmetric
$(0,2)$-tensor fields on a simply-connected manifold are in the one-to-one
correspondence with  bilinear symmetric $(0,2)$-forms on $T_p\wt M$ invariant
with respect to $\Hol{\hat g}$.

We denote by $g_i$, $i= 0,\dots ,\ell$,  the restriction of the metric $g$
to the subspace $V_i$, considered as a $(0,2)$-tensor on $T_pM$: for
the vectors $v = v_0 + v_1+...+v_\ell$ and $u = u_0 + u_1+...+u_\ell$
of $T_p\wt M$ (where $v_\alpha, u_\alpha\in V_\alpha$) we put
$$g_i(v_0 + v_1+...+v_\ell, u_0 + u_1+...+u_\ell) = g(v_i, u_i).$$
$g_i$ is evidently invariant w.r.t.  $\Hol{\hat g}$.

\begin{thm}
  \label{cov}
  Let $(\wt M,\hat g)$ be a simply-connected cone manifold of dimension $n+1$. Assume
  $\hat g$ has signature $(1, n)$ , $(n-1, 2)$, or the riemannian
  signature $(0,n+1)$,  and consider the (maximal)
  decomposition~\eqref{decomposition}. We denote by
  $\{\tau_1,...,\tau_k\}$ a  basis in the space of 1-forms on $T_p
  \wt M$ that are invariant with respect to  the
  holonomy group.
  
  Let $A$ be a symmetric bilinear form  on $T_p \wt M$ such that  it is  invariant with
  respect to the holonomy group. Then, there exists   a
  symmetric $k\times k$-matrix $c_{ij}\in \mathbb{R}^{k^2}$ and constants
  $C_1,...,C_\ell\in \mathbb{R}$ such that
  \begin{equation}
    \label{1}
    A= \sum_{i,j=1}^k c_{ij} \tau_i\tau_j+ \sum_{i=1}^\ell C_i g_i .
  \end{equation} 
\end{thm}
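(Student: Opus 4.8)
The plan is to linearise the statement. Since $\hat g$ is nondegenerate and both $\hat g$ and $A$ are $\Hol{\hat g}$--invariant, the $(1,1)$--tensor $S:=\hat g^{-1}A$, determined by $A(x,y)=\hat g(Sx,y)$, is $\hat g$--selfadjoint and commutes with every element of $\Hol{\hat g}$; conversely every such $S$ yields an invariant symmetric $A$. Hence it is enough to describe the space $\mathcal C$ of $\hat g$--selfadjoint endomorphisms of $T_p\wt M$ commuting with the holonomy group. Under this correspondence the forms $g_i$ are the images of the orthogonal block projections $\Proj{i}$ onto $V_i$, so that $\sum_i C_i g_i\leftrightarrow\sum_i C_i\Proj{i}$; and, writing $e_1,\dots,e_k$ for the $\Hol{\hat g}$--invariant vectors with $\hat g(e_i,\cdot)=\tau_i$, each product $\tau_i\tau_j$ is the image of the selfadjoint endomorphism $x\mapsto\tau_j(x)e_i+\tau_i(x)e_j$. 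So the theorem is equivalent to the claim that $\mathcal C$ is spanned by the $\Proj{i}$ together with these rank--$\le2$ endomorphisms built from invariant vectors.

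First I would use that $S$, commuting with $\Hol{\hat g}$, preserves the holonomy--isotypic structure and, being selfadjoint, has off--diagonal blocks $S\colon V_\alpha\to V_\beta$ ($\alpha\neq\beta$) that are intertwiners. Two observations handle the cross terms. For the flat block $V_0$ the holonomy acts trivially, so $A|_{V_0}$ is an arbitrary symmetric form on $V_0$, and any intertwiner into or out of $V_0$ has image in the invariant vectors; both are therefore expressible through the $\tau_i$. For two distinct nonflat factors $V_i,V_j$, an intertwiner whose image is not contained in the span of the invariant vectors would force a nontrivial subquotient of $V_i$ to be isomorphic to one of $V_j$; a key structural input, coming from the cone geometry behind~\eqref{decomposition} (and from its companion Theorem~\ref{cov1}), is that the nonflat factors of these cones are pairwise nonisomorphic, which kills all such remaining cross terms. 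Thus everything off the nonflat diagonal blocks is absorbed into $\sum_{i,j}c_{ij}\tau_i\tau_j$, and it remains to treat a single nonflat block.

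The heart of the proof is to show that an $\hat g|_{V_i}$--selfadjoint endomorphism of $V_i$ commuting with $\Hol{\hat g}|_{V_i}$ equals a scalar multiple of $\Id_{V_i}$ plus a combination of the endomorphisms $x\mapsto\hat g(w,x)\,w$ attached to invariant null vectors $w\in V_i$. When $V_i$ is irreducible this is a Schur--type fact: the real endomorphism commutant is $\R$, $\mathbb C$ or $\mathbb H$, and in the holonomies that occur here the metric forces its nontrivial generators to be skew, so its selfadjoint part is exactly $\R\,\Id_{V_i}$ and $A|_{V_i}=C_i\,\hat g|_{V_i}$. In the Riemannian signature $(0,n+1)$ the holonomy is fully reducible, every nonflat $V_i$ is irreducible with no invariant vectors, and $k=\dim V_0$, so the theorem follows at once.

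The main obstacle is the indefinite case, where the holonomy representation on a block need not be completely reducible: an indecomposable but nonirreducible block carries an invariant isotropic subspace, and its commutant then contains nonscalar selfadjoint nilpotents such as $x\mapsto\hat g(w,x)\,w$. This is precisely where the signature hypothesis is decisive: with at most two ``small'' directions the invariant isotropic subspace of such a block has dimension at most $2$, which severely restricts the admissible (weak Berger) holonomy types. I would then run a type--by--type verification, using the classification of cone holonomies in these signatures, to show that the only additional commuting selfadjoint endomorphisms are exactly those built from invariant null vectors, i.e.\ those already matching the $\tau_i\tau_j$ terms. Collecting the diagonal scalars into $\sum_i C_i g_i$ and all invariant--vector contributions into $\sum_{i,j}c_{ij}\tau_i\tau_j$ then gives~\eqref{1}. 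The genuinely new and most delicate step is the signature $(n-1,2)$, where the two isotropic directions may interact within one block or be distributed over two, and the combinatorics of the possible types must be checked directly.
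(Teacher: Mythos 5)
Your overall frame -- replacing $A$ by the selfadjoint endomorphism $S=\hat g^{-1}A$ commuting with the holonomy group, splitting $S$ into blocks along the decomposition~\eqref{decomposition}, and treating off-diagonal and diagonal blocks separately -- is the same as the paper's. But both halves of your execution have problems. For the cross terms, your argument rests on the claim that ``the nonflat factors of these cones are pairwise nonisomorphic.'' This is proved nowhere (it is not in Theorem~\ref{cov1}, which only counts dimensions), and it is also unnecessary: the correct argument, which is the paper's Lemma~\ref{thm:nondiag}, is purely algebraic and uses only that the holonomy group is the direct product $H_0\times\dots\times H_\ell$ with $H_\alpha$ acting trivially on $V_\beta$ for $\beta\neq\alpha$. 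For any equivariant $L'\colon V_b\to V_a$ and any $h$, applying the factor $h_a$ (which acts trivially on $V_b$) gives $hL'(v)=\Proj{a}Lh_a\Proj{b}(v)=L'(v)$, so the image of $L'$ consists of invariant vectors, and dually its ``row space'' consists of invariant $1$-forms -- no non-isomorphism of the factors is needed, and isomorphic factors cause no harm because an intertwiner between them cannot commute with the full product group anyway.

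The more serious gap is in the diagonal blocks, which are the heart of the theorem, and there your proposal does not contain a proof. For irreducible blocks your Schur argument is incomplete: a selfadjoint parallel complex structure $J$ (i.e.\ a selfadjoint element of the commutant with $J^2=-\Id$) is an anti-isometry and can exist in split signature, e.g.\ signature $(2,2)$ in dimension $4$, which is allowed under the hypothesis $(n-1,2)$; the paper rules this out not by representation theory but by the explicit computation of Section~\ref{sec:complex}, which uses $\tilde R\vec v=0$ for the cone vector field to force flatness. For indecomposable non-irreducible blocks you defer entirely to ``the classification of cone holonomies in these signatures'' and a ``type-by-type verification'' that you never perform -- this is precisely the content that must be proved, and no such classification is invoked in the paper. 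The paper's actual mechanism is self-contained: Theorem~\ref{thm:pairofmatrices} plus the signature hypothesis limit the nilpotent part of $L$ to at most two nontrivial Jordan blocks; the cone structure supplies a vector $\vec v$ of maximal Jordan height ($L^r\vec v\neq 0$ whenever $L^r\neq 0$); elementary linear algebra then yields $L^i_pR^p_{\ jk\ell}=0$; and Lemma~\ref{thm:kernelR} converts this into parallel vector fields spanning $\Im L$, i.e.\ into the $\tau_i\tau_j$ terms. Example~\ref{ex1} shows that for non-cone metrics of signature $(n-1,2)$ the conclusion is simply false, so any correct proof must use the cone vector field quantitatively exactly at this step -- your proposal uses it only through an appeal to an unspecified classification, which leaves the decisive case unproven.
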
 

Evidently, any bilinear form  $A$ given by the formula~\eqref{1} is invariant
with respect to the  holonomy group and is symmetric.

In the case when the metric $\hat g$ is Riemannian, Theorem~\ref{cov}
is well-known and is essentially due to de Rham~\cite{DR}. Moreover,
in this case it is true without the assumption that $(\wt M, \hat g)$
is a cone manifold. The classical way to formulate Theorem~\ref{cov}
in the Riemannian setup is as follows: there exists a coordinate
system
$$x =(\underbrace{x_0^1,...,x_0^{k_0}}_{\bar x_0}, \underbrace{x_1^1,...,x_1^{k_1}}_{\bar x_1}, ... ,
\underbrace{x_\ell^1,...,x_\ell^{k_\ell}}_{\bar x_\ell})$$ in a neighborhood of $p$ such that in
this coordinate system the metric has the block-diagonal form (with
the blocks of dimensions  $k_0\times k_0$, ... , $k_\ell\times k_\ell$)
$$
\hat g =
\begin{pmatrix}
  g_0 & &  \\
  & g_1& &  \\
  & & \ddots &  \\
  & & & g_\ell
\end{pmatrix}
$$
such that the entries of each matrix $g_i$ depend on the coordinates $
x_i^1,...,x_i^{k_i}$ only, such that the metric $g_0$ is the flat
metric $(dx_0^1)^2 + ... + (dx^{k_0}_0)^2$, and such that the holonomy
group of each metric $g_i$ for $i\ne 0$ is irreducible. For this
metric, every symmetric bilinear form  invariant with respect to  the holonomy group is given by
\begin{equation}
  \label{1b}
  A= \sum_{i,j=1}^{k_0} c_{ij} dx_0^i\, dx_0^j + \sum_{i=1}^\ell C_i g_i,
\end{equation}
where $c_{ij}$ is a symmetric $k_0\times k_0$-matrix. The relation
between the formulas~\eqref{1} and~\eqref{1b} is as follows: in the
formula~\eqref{1b}, the 1-forms invariant with respect to $\Hol{\hat
  g}$ are (essentially) the one-forms on $V_0$ (and therefore $k=
k_0=dim(V_0)$ and as the basis in the space of 1-forms invariant w.r.t. $\Hol{\hat
  g}$  we can take $dx_0^1,...,dx_0^{k_0}$). In the other signatures, there may exist invariant
one-forms on $T_p\wt M$ that are not 1-forms on $V_0$.

In the case when the metric $\hat g$ has lorentzian signature,
Theorem~\ref{cov} is also known~(see for example \cite{SK}) and is also true
without the assumption that $(\wt M, \hat g)$ is a cone manifold. The
new part of Theorem~\ref{cov} is when the signature is $(n-1, 2)$, as
example \ref{ex1} in Section \ref{counter} shows, in this case the assumption that the metric is a cone
metric is essential.

Moreover, the assumption that the signature of the metric $g$ is riemannian, lorentzian, or  $(n-1, 2)$  is important for Theorem \ref{cov}, see (counter)example \ref{ex2} in Section \ref{counter}. 
 
Theorem~\ref{cov} describes all parallel  symmetric
$(0,2)$-tensor fields on cone manifolds of signatures $(0,n+1)$, $(1,n)$ and $(n-1,
2)$. The next theorem counts the dimensions of the space of such
tensor fields.
  
\begin{thm}
  \label{cov1}
  Under the assumptions of Theorem~\ref{cov}, the number $k$ is at
  most $n-2$ and the number $\ell$ is at least $1$ and at most $\lfloor
  \tfrac{n-k+1}{3}\rfloor$.
\end{thm}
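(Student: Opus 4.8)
The plan is to reduce both inequalities to a single dimension count on the blocks $V_i$. Recall that, via $\hat g$, the space of holonomy-invariant $1$-forms is identified with the space of holonomy-fixed vectors, so $k$ equals the dimension of the fixed-vector space in $T_p\wt M$; since the blocks are mutually orthogonal and $\hat g$-nondegenerate, this space is the orthogonal direct sum of the fixed spaces of the individual blocks. Writing $k_i$ for the dimension of the holonomy-fixed space inside $V_i$ (so that all of $V_0$ is fixed and $k = \dim V_0 + \sum_{i=1}^\ell k_i$) and using $\sum_{i=0}^\ell \dim V_i = n+1$, one gets $\sum_{i=1}^\ell(\dim V_i - k_i) = (n+1)-k$. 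Hence it suffices to prove two things: (A) $\ell \ge 1$, and (B) $\dim V_i - k_i \ge 3$ for every non-flat block $i\in\{1,\dots,\ell\}$. Indeed, summing (B) yields $3\ell \le \sum_{i=1}^\ell(\dim V_i - k_i) = n+1-k$, so $\ell \le \lfloor\tfrac{n-k+1}{3}\rfloor$, and combining this with (A) gives $n+1-k \ge 3\ell \ge 3$, i.e. $k \le n-2$.

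For (A): the cone is nonflat and $\wt M$ is simply connected, so $\Hol{\hat g}$ coincides with the restricted holonomy and is nontrivial; therefore at least one block carries nontrivial holonomy and $\ell\ge 1$. (If $\ell=0$ the whole tangent space would equal $V_0$, on which the holonomy acts trivially, forcing $\hat g$ to be flat.) Thus everything reduces to the per-block inequality (B).

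To attack (B) I would first record two structural facts about a non-flat block $V_i$. First, the fixed space $F_i\subset V_i$ of dimension $k_i$ is totally isotropic: a fixed vector $v$ with $\hat g(v,v)\ne 0$ spans a nondegenerate invariant line which, being flat, belongs to the maximal flat factor $V_0$ and not to $V_i$, and two fixed null vectors with $\hat g(v,w)\ne 0$ span a nondegenerate invariant plane that would split off; in particular $k_i$ is bounded by the Witt index of $\hat g$, that is by $0,1,2$ in the three admitted signatures, consistent with the global bound. Second, each block is itself locally a metric cone: the component $\xi_{(i)}$ of the Euler field $r\partial_r$ along the parallel distribution extending $V_i$ satisfies $\hat\nabla_X\xi_{(i)} = X$ for $X$ tangent to that distribution (this follows from $\hat\nabla(r\partial_r)=\Id$ together with the parallelism of the distributions), so the potential $\tfrac12\hat g(\xi_{(i)},\xi_{(i)})$ has Hessian equal to the restricted metric, which is the defining property of a cone. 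Consequently a non-flat block satisfies $\dim V_i\ge 3$, since a cone over a base of dimension $\le 1$ is flat; this already settles (B) when $k_i=0$, in particular the entire Riemannian case.

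The hard part is the boost from $\dim V_i\ge 3$ to $\dim V_i\ge k_i+3$ when $k_i\ge 1$, i.e. for the genuinely indefinite, weakly-irreducible blocks that occur only in signatures $(1,n)$ and $(n-1,2)$. Here I would pass to the $(\dim V_i-1)$-dimensional base $N_i$: a parallel null vector field on the cone $V_i$ corresponds to a solution $u$ of the Obata equation $\mathrm{Hess}^{N_i} u = -u\,h_i$, and the isotropy of $F_i$ translates into $u_a u_b + \langle\nabla u_a,\nabla u_b\rangle_{h_i} = 0$; so the $k_i$ fixed directions yield $k_i$ Obata functions with non-null gradients of vanishing energy. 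One then must show that a non-constant-curvature base carrying such a $k_i$-dimensional isotropic family of Obata solutions has dimension at least $k_i+2$. This is exactly the place where the holonomy classification obtained in the proof of Theorem~\ref{cov} enters: it describes the possible weakly-irreducible cone holonomies in signatures $(1,n)$ and $(n-1,2)$ and pins down, for each admissible $k_i\in\{0,1,2\}$, the minimal block dimension $k_i+3$ (realised by the $3$-, $4$- and $5$-dimensional model cones underlying Theorem~\ref{thm:realization}). The main obstacle is thus controlling these indecomposable blocks with an isotropic fixed subspace, and in particular the genuinely new case $k_i=2$ in signature $(n-1,2)$; the summation step itself is purely combinatorial.
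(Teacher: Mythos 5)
Your combinatorial reduction is sound and essentially the paper's: writing $r_i$ (your $k_i$) for the dimension of the holonomy-fixed subspace of $V_i$, the two claims (A) $\ell\ge 1$ and (B) $\dim V_i\ge r_i+3$ for each $i\ge 1$ do imply both inequalities of the theorem, and your observations that each block carries a solution of \eqref{eq:hom} (this is Lemma~\ref{thm:split}) and that a nonflat block admitting such a solution has dimension $\ge 3$ (Lemma~\ref{thm:Rv0}) correctly settle (B) when $r_i=0$, hence the Riemannian case. The problem is that for $r_i\ge 1$ --- precisely the new situation in signatures $(1,n)$ and $(n-1,2)$, which is the whole point of the theorem --- you do not prove (B): you reformulate it on the base as a statement about isotropic families of Obata solutions and then write that ``one then must show'' the dimension bound, deferring to ``the holonomy classification obtained in the proof of Theorem~\ref{cov}''. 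No such classification exists in that proof: Theorem~\ref{cov} is proved by showing that the diagonal components of a parallel self-adjoint endomorphism satisfy $L^i_pR^p_{\ jkl}=0$ and then invoking Lemma~\ref{thm:kernelR}; nowhere are the weakly-irreducible cone holonomies of these signatures classified, and nothing there pins down a minimal block dimension as a function of $r_i$. So the core inequality of the theorem is left unproven.

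The missing step has an elementary proof, and it is the paper's: inside a block $V_\alpha$, every holonomy-fixed vector $u$ satisfies $u^j\overset{(\alpha)}{R}{}^i_{\ jkl}=0$ (parallel fields annihilate the curvature), and by Lemma~\ref{thm:split} the projection of the cone vector field to $V_\alpha$ is the gradient of a solution of \eqref{eq:hom} for the block metric and also annihilates $\overset{(\alpha)}{R}$; moreover, by (the proof of) Lemma~\ref{thm:cone-not-parallel} this projection is linearly independent of the fixed vectors at every point where the block curvature is nonzero, because it does not annihilate the covariant derivative of the curvature while parallel fields do. This produces $r_\alpha+1$ linearly independent vectors annihilating $\overset{(\alpha)}{R}$; since $R_{ijkl}$ is skew-symmetric in $i,j$, a nonzero curvature cannot have such a kernel of codimension $\le 1$, so $\dim V_\alpha\le r_\alpha+2$ would force $\overset{(\alpha)}{R}\equiv 0$, contradicting the nonflatness of the block. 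Hence $\dim V_\alpha\ge r_\alpha+3$, uniformly in the signature, with no case analysis on $r_\alpha\in\{0,1,2\}$, no isotropy of the fixed space, and no holonomy classification. If you replace your Obata/classification paragraph by this curvature-annihilation argument, your proof closes and becomes, up to the packaging of the two inequalities, the proof given in the paper.
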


Combining Theorems~\ref{cov} and~\ref{cov1}, we obtain
Theorem~\ref{cone}. Now, as we explained above, Theorem~\ref{cone} is
essentially equivalent to the ``generic'' part of the
Theorem~\ref{th:main}; and the ideas used in the proof of Theorem \ref{cov}, \ref{cov1}  will  also be seen  in  the ``special'' part of the proof  of Theorem~\ref{th:main}.

\section{ Degree of mobility as the dimension of the  space of parallel symmetric $(0,2)$ tensor fields on the cone. }  \label{standard10}
\subsection{Geodesically equivalent metrics, Sinjukov equation, and degree of   mobility.}
\label{standard}

Whenever the tensor index notation are used, we consider $g$ as the
background metric (to low and rise indexes), sum with respect to
repeating indexes, and denote by comma ``,'' the covariant
differentiation w.r.t. the Levi-Civita connection of $g$. The
dimension of our manifold will be denoted by $n$; we assume $n\geq 3$.
 
As it was known already to Levi-Civita~\cite{Levi-Civita}, two
connections $\nabla= \Gamma_{jk}^i $ and $\bar \nabla= \bar
\Gamma_{jk}^i $ have the same unparameterized geodesics, if and only
if their difference is a pure trace: there exists a 1-form $\phi $
such that
\begin{equation}
  \label{c1}
  \bar \Gamma_{jk}^i - \Gamma_{jk}^i = \delta_k^i\phi_{j} +
  \delta_j^i\phi_{k}.
\end{equation} 
  
If $\nabla$ and $\bar \nabla$ related by~\eqref{c1} are Levi-Civita
connections of metrics $g$ and $\bar g$, then one can find explicitly
(following Levi-Civita~\cite{Levi-Civita}) a function $\phi$ on the
manifold such that its differential $\phi_{,i}$ coincides with the 1-form  $\phi_i$: indeed, contracting~\eqref{c1} with respect to $i$
and $j$, we obtain $\bar \Gamma_{p i}^p = \Gamma_{p i}^p + (n+1)
\phi_{i}$.  On the other hand, for the Levi-Civita connection $\nabla$
of a metric $g$ we have $ \Gamma_{p k}^p = \tfrac{1}{2} \frac{\partial
  \log(|det(g)|)}{\partial x_k} $.  Thus,
\begin{equation}
  \label{c1,5}
  \phi_{i}= \frac{1}{2(n+1)}  \tfrac{\partial }{\partial x_i}
  \log\left(\left|\tfrac{\det(\bar g)}{\det( g)}\right|\right)=
  \phi_{,i}
\end{equation} 
for the function $\phi:M\to \mathbb{R}$ given by
\begin{equation}
  \label{phi}
  \phi:= \tfrac{1}{2(n+1)} \log\left(\left|\tfrac{\det(\bar g)}{\det(
        g)}\right|\right).
\end{equation}
In particular, the derivative of $\phi_i$ is symmetric, i.e.,
$\phi_{i,j}= \phi_{j,i}$.
 
The formula~\eqref{c1} implies that two metrics $g$ and $\bar g$ are
geodesically equivalent if and only if for a certain $\phi_{i}$ (which
is, as we explained above, the differential of $\phi$ given
by~\eqref{phi}) we have
\begin{equation}
  \label{LC}
  \bar g_{ij, k} -2 \bar g_{ij} \phi_{k}-\bar g_{ik}\phi_{j}- \bar
  g_{jk}\phi_{i}= 0, 
\end{equation} 
where ``comma'' denotes the covariant derivative with respect to the
connection $\nabla$.  Indeed, the left-hand side of this equation is
the covariant derivative with respect to $\bar \nabla$, and vanishes
if and only if $\bar \nabla$ is the Levi-Civita connection for $\bar
g$.

The equations~\eqref{LC} can be linearized by a clever substitution:
consider $a_{ij}$ and $\lambda_i$ given by
\begin{eqnarray}
  \label{a}
  a_{ij} &=   &e^{2\phi} \bar g^{p q} g_{p i} g_{q j},\\
  \label{lambda} 
  \lambda_{i} & = &  -e^{2\phi}\phi_p \bar g^{p q} g_{q i},
\end{eqnarray}
where $\bar {g}^{p q}$ is the tensor dual to $\bar g_{p q}$: \ $\bar
{g}^{p i}\bar g_{p j}= \delta_j^i$.  It is an easy exercise to show
that the following linear equations for the symmetric $(0,2)$-tensor
$a_{ij}$ and $(0,1)$-tensor $\lambda_i$ are equivalent to~\eqref{LC}:
\begin{equation}
  \label{basic} 
  a_{ij,k}= \lambda_i g_{jk} + \lambda_j  g_{ik}. 
\end{equation} 
One may consider the equation~\eqref{basic} as a linear PDE-system on
the unknown $(a_{ij},\lambda_k)$; the coefficients in this system
depend on the metric $g$.

One can also consider~\eqref{basic} as a linear PDE-system on the
components of the tensor $a_{ij}$ only, since the components of
$\lambda_i$ can be obtained from the components of $\nabla_k a_{ij}=
a_{ij,k}$ by linear algebraic manipulations.  Indeed,
multiplying~\eqref{basic} by $g^{ij}$ we obtain 
\begin{gather}
  \label{eq:lambda}
  \lambda_{k} =
  \tfrac{1}{2}(a_{ij}g^{ij})_{,k}=\tfrac{1}{2}\left(\tr_g(a)\right)_{,k}
\end{gather}

Since~\eqref{basic} is a system of linear PDE, the set of its
solutions is a linear vector space. Its dimension will be called the
\emph{degree of mobility} of $g$ and denoted by $D(g)$. Clearly, $D(g)
\ge 1$, since $a_{ij} =g_{ij}$ is a solution of~\eqref{basic}. It is
known (see for example~\cite[p.134]{Sinjukov}) that $D(g)\le
\tfrac{(n+1)(n+2)}{2}$.

\subsection{Metrics with $D(g)\ge 3$, extended system,  and plan of the proof of Theorem \ref{th:main}.} \label{standard1}

\begin{thm}[\cite{KM}]
  \label{thm:mg+Ba}
  Let $(M^n,g)$, $n\ge 3$,  be  a connected pseudo-Riemannian manifold such that  $D(g) \geq
  3$. Then there exists a constant $B$ such that for every  solution
  $(a_{ij},\lambda_i)$ of~\eqref{basic} there exists a smooth  function
  $\mu$ such that the following system
  \begin{gather}
    \label{eq:mg+Ba}
    \left\{
      \begin{array}{ll}
        a_{i j,k}&=\lambda_i g_{j k} + \lambda_j g_{i k}\\
        \lambda_{i,j}&=\mu g_{i j}+B a_{i j}\\
        \mu_{,i}&=2B\lambda_i
      \end{array}
    \right. 
  \end{gather}
  is satisfied.
\end{thm}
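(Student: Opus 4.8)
The plan is to prolong the linear system~\eqref{basic} by repeated covariant differentiation until it closes, and to invoke the hypothesis $D(g)\ge 3$ exactly at the point where one must show that the ``curvature coefficient'' produced by the prolongation is a single constant $B$ valid for \emph{every} solution.

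First I would differentiate the basic equation $a_{ij,k}=\lambda_i g_{jk}+\lambda_j g_{ik}$ once more in a direction $l$, antisymmetrize in the pair $(k,l)$, and apply the Ricci identity $a_{ij,kl}-a_{ij,lk}=-R_{ikl}{}^m a_{mj}-R_{jkl}{}^m a_{im}$. Since differentiating the basic equation gives $a_{ij,kl}=\lambda_{i,l}g_{jk}+\lambda_{j,l}g_{ik}$, the antisymmetrization produces the purely algebraic identity $\lambda_{i,l}g_{jk}-\lambda_{i,k}g_{jl}+\lambda_{j,l}g_{ik}-\lambda_{j,k}g_{il}=-R_{ikl}{}^m a_{mj}-R_{jkl}{}^m a_{im}$, in which an antisymmetrized combination of the second derivatives of $\lambda$ is expressed through the curvature contracted with $a$. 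Contracting this with $g^{jk}$ and using that $\lambda_i=\tfrac12(\tr_g a)_{,i}$ is a gradient, so that $\lambda_{i,j}=\lambda_{j,i}$, lets me solve for $\lambda_{i,l}$ and obtain a relation of the shape $\lambda_{i,l}=\mu\, g_{il}+N_{il}$, where $\mu:=\tfrac1n g^{pq}\lambda_{p,q}$ is a well-defined function and $N_{il}$ is a symmetric $(0,2)$-tensor depending linearly on $a$ through the curvature. This is the second equation of~\eqref{eq:mg+Ba} modulo the identification of $N_{il}$ with $B\,a_{il}$.

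The heart of the argument, and the step I expect to be the main obstacle, is to show that $N_{il}=B\,a_{il}+c\,g_{il}$ with one and the same constant $B$ for all solutions simultaneously; in general $N$ is merely the projective Schouten tensor contracted with $a$ and need not be proportional to $a$. Here $D(g)\ge 3$ is indispensable. I would use that the solution space is a vector space of dimension $\ge 3$ containing the trivial solution $(a,\lambda)=(g,0)$, which by itself imposes no constraint on $B$; choosing two further solutions $a^{(1)},a^{(2)}$ that, together with $g$, are pointwise linearly independent on a dense open set (the lower-dimensional bad locus being handled by continuity), I would write $\lambda^{(s)}_{i,l}=\mu_s g_{il}+N_{il}(a^{(s)})$ for each and exploit linearity of $a\mapsto N(a)$. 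Comparing the identity for $a^{(1)}$, $a^{(2)}$ and their combinations, a pointwise linear-algebra argument forces $N(a^{(s)})-B_s a^{(s)}$ to be a multiple of $g_{il}$ and forces the scalar functions $B_s$ to coincide; absorbing the $g$-part into $\mu$ then yields $\lambda_{i,j}=\mu g_{ij}+B\,a_{ij}$ with a single function $B$ common to all solutions.

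It remains to show $B$ is constant and to derive the third equation. For this I would differentiate $\lambda_{i,j}=\mu g_{ij}+B a_{ij}$, substitute the basic equation for $a_{ij,k}$, antisymmetrize in the last two indices, and apply the Ricci identity to the one-form $\lambda$, namely $\lambda_{i,jk}-\lambda_{i,kj}=-R_{ijk}{}^m\lambda_m$. Contracting the resulting identity with $g^{ij}$ produces, on the one hand, the relation $\mu_{,k}=2B\lambda_k$, which is the third equation of~\eqref{eq:mg+Ba}, and, on the other hand, terms proportional to $B_{,k}a_{ij}$; feeding in again the three pointwise-independent solutions supplied by $D(g)\ge3$ forces $B_{,k}=0$, so that $B$ is locally constant and, by connectedness of $M$, globally constant. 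The universality and constancy of $B$ is precisely where $D(g)\ge 3$ cannot be dropped: for $D(g)=2$ distinct solutions may genuinely carry distinct constants, which is why that case is treated separately.
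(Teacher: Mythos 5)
You should first be aware that the paper itself does not prove this theorem: it is imported verbatim from \cite{KM}, so the benchmark is the (long) proof given there. Your opening step is correct and classical (it goes back to Sinjukov): differentiating \eqref{basic}, antisymmetrizing, applying the Ricci identity, contracting with $g^{jk}$ and using $\lambda_{i,j}=\lambda_{j,i}$ (which holds since $\lambda_i$ is the differential of $\tfrac12\tr_g(a)$) gives $\lambda_{i,j}=\mu g_{ij}+N_{ij}$ with $\mu=\tfrac1n g^{pq}\lambda_{p,q}$ and $N$ linear in $a$ through the curvature. The genuine gap is exactly at what you yourself call the heart of the argument. The contracted integrability condition only tells you that $N(a)$ is some trace-free symmetric tensor; neither this identity, nor the linearity of $a\mapsto N(a)$, nor the existence of three pointwise independent solutions forces $N(a^{(s)})$ into the span of $a^{(s)}$ and $g$ at a point. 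The assertion that ``a pointwise linear-algebra argument forces $N(a^{(s)})-B_s a^{(s)}$ to be a multiple of $g_{il}$'' is precisely the content of the theorem, and you give no mechanism for it; a linear operator on a three-dimensional space of symmetric tensors need not act as a scalar on that space. In \cite{KM} this step is the core of the whole paper: it requires the full uncontracted integrability condition, its covariant derivatives (second-order integrability conditions), and a lengthy case analysis specific to pseudo-Riemannian signature, where the endomorphism $a^i_j$ may be non-diagonalizable and $\lambda_i$ may be isotropic; these degenerate configurations are exactly why the pseudo-Riemannian statement remained open long after Solodovnikov's Riemannian version.

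There are also two subsidiary problems. (i) You assume that two solutions exist which, together with $g$, are pointwise linearly independent on a dense open set. Linear independence in the solution space does not give this for free: the locus where some point-dependent nontrivial combination of $g,a^{(1)},a^{(2)}$ vanishes is closed but need not be nowhere dense, and degenerate solutions do occur in this circle of problems (for instance the rank-one solutions $u_iu_j$ constructed in Section \ref{sec:B=0_isotropic} of the paper). (ii) In your last step, contracting the antisymmetrized derivative of $\lambda_{i,j}=\mu g_{ij}+Ba_{ij}$ produces not only $\mu_{,k}$- and $B_{,k}$-terms but also the term $\lambda^m R_{mk}$ (Ricci contracted with $\lambda$); to arrive at $\mu_{,i}=2B\lambda_i$ one must additionally prove that $\lambda^m R_{mk}$ is proportional to $\lambda_k$ with the correct constant factor, i.e.\ that $\lambda$ is a Ricci eigenvector with a specific eigenvalue, which again needs the integrability apparatus and is absent from your sketch. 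In short, your skeleton (prolongation plus the use of $D(g)\ge 3$) is the right one, but the actual substance of the proof is missing.
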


Thus, the degree of mobility of the metric $g$ is equal to the
dimension of the space of solutions $(a,\lambda,\mu)$ of the
``extended system''~\eqref{eq:mg+Ba}.

Note that the constant $B$ is a metric invariant of $g$ (in the sense that a metric can not have two nontrivial solutions with different $B$, see \cite[\S 2.3.5]{KM})  but
it is not a projective invariant: for a metric $\bar g$ that is
geodesically equivalent to $g$ we may have $\bar B:= B(\bar g) \neq B$ (see
Section~\ref{sec:mu_neq_0}).

In Section~\ref{sec:Bneq0} we reduce the case $B\neq 0$ to   $B=-1$ and
then show that  the solutions of~\eqref{eq:mg+Ba} and
parallel symmetric $(0,2)$-tensor fields on the cone manifold are in one-to-one correspondence. 
In this setting, Theorem \ref{th:main} follows from  Theorems \ref{cov}, \ref{cov1}. 
 
In the case $B=0$  we consider two  subcases.  In
Section~\ref{sec:zeroB},  we assume  that 
  the extended
system~\eqref{eq:mg+Ba} admits a solution $(a,\lambda,\mu)$ with
  $\mu\not\equiv  0$.  In this subcase we    can  (locally) 
  find a  metric $\bar g$ that is  geodesically equivalent to $g$, has the same signature as $g$ and 
such that $\bar B= B(\bar g)<0$. 
   Since evidently $D(\bar g)=D(g)$ and the case
$B\neq 0$ has been already solved, we are done (though some additional
work is required  to make a transition  ``local'' $\rightarrow$ ``on a simply connected manifold'', see Section~\ref{sec:noncompact} for details).
 
Finally, in Section~\ref{sec:B=0_isotropic} we consider the ``special''
case,  when the extended system admits only solutions $(a,\lambda,\mu)$
with $\mu=0$. In this case all metrics $\bar g$ geodesically
equivalent to $g$ have  $\bar B=0$. In Section \ref{sec:B=0_isotropic}, we study and describe  such metrics,  
 calculate their  degrees  of mobility, and show that  they are  still in  the list
from Theorem~\ref{th:main}. 

\subsection{Metric cone and its Levi-Civita connection.}
\label{sec:cone}

By the  
\emph{metric cone} over  $(M,g)$ we understand  the  product manifold $\wt
M=\R_{{}>0}(r)\times M(x)$ equipped by  the  metric $\tg$ such that  in the
coordinates $(r,x)$ its matrix has the form
\begin{gather}
  \label{eq:conemetric}
  \tg(r,x)=\left(
    \begin{array}{cc}
      1& 0 \\
      0& r^2 g(x)
    \end{array}
  \right).
\end{gather}
The coordinates such that a metric has the form \eqref{eq:conemetric} will be called the {\it cone coordinates}.

For further use we calculate the Levi-Civita connection on $\wt M$ in the cone coordinates.

\begin{lem}[Folklore, see for example~\cite{Cortes2009,Mounoud2010}]
  \label{thm:coneconnection}
  Levi-Civita connection $\wt\nabla=\{\wt\Gamma^{\ti}_{\tj \tk}\}$
  corresponding to metric $\tg$ on $\wt M$ is given  by formula:
  \begin{gather}
    \wt\Gamma^0_{0 0}=0, \ 
    \wt\Gamma^i_{0 0}=0,\\
    \wt\Gamma^0_{j 0}=\wt\Gamma^0_{0 k}=0,\\
    \wt\Gamma^i_{j 0}=\frac{1}{r}\,\delta^i_j,\qquad \wt\Gamma^i_{0
      k}=\frac{1}{r}\,\delta^i_k,\\
    \wt\Gamma^0_{j k}={}-r\cdot g_{j k}(x),\\
    \wt\Gamma^i_{j k}=\Gamma^i_{j k}(x),
  \end{gather}
  where $\Gamma^i_{j k}$ are Christoffel symbols of the Levi-Chivita
  connection determined by metric $g$ on $M$, and indicies $i,j,k$
  take values from $1$ to $n$.
\end{lem}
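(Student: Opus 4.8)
The plan is to read off all Christoffel symbols of $\tg$ directly from the Koszul formula
$$\wt\Gamma^{a}_{bc}=\tfrac12\,\tg^{ad}\bigl(\partial_b \tg_{dc}+\partial_c \tg_{db}-\partial_d \tg_{bc}\bigr),$$
where the Latin indices $a,b,c,d$ run over $\{0,1,\dots,n\}$, the value $0$ labelling the cone coordinate $r$ and the values $1,\dots,n$ labelling the coordinates $x^1,\dots,x^n$ on $M$. First I would record the only nonvanishing components of $\tg$ and of its inverse: from \eqref{eq:conemetric} one has $\tg_{00}=1$ and $\tg_{ij}=r^2 g_{ij}(x)$, and correspondingly $\tg^{00}=1$ and $\tg^{ij}=r^{-2}g^{ij}(x)$, all mixed components being zero. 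Likewise the only nonzero partial derivatives of the metric are $\partial_0 \tg_{ij}=2r\,g_{ij}(x)$ and $\partial_k \tg_{ij}=r^2\,\partial_k g_{ij}(x)$; every derivative carrying a $0$-index of the metric vanishes, because $\tg_{00}$ is constant and the mixed components are identically zero.

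Then I would substitute these data into the Koszul formula for each of the five index types appearing in the statement. The key simplification is that the contraction over $d$ collapses: since $\tg^{ad}$ is diagonal, only $d=a$ survives, so for an upper index $0$ one uses $\tg^{00}=1$ and for an upper index $i$ one uses $\tg^{ij}=r^{-2}g^{ij}$. For instance $\wt\Gamma^i_{j0}=\tfrac12 r^{-2}g^{im}\,\partial_0 \tg_{mj}=\tfrac12 r^{-2}g^{im}\cdot 2r\,g_{mj}=r^{-1}\delta^i_j$, and $\wt\Gamma^0_{jk}=\tfrac12\tg^{00}\bigl(-\partial_0 \tg_{jk}\bigr)=-r\,g_{jk}$, the two terms with a $0$-derivative of a mixed component dropping out. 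The purely horizontal symbols give $\wt\Gamma^i_{jk}=\tfrac12 g^{im}\bigl(\partial_j g_{mk}+\partial_k g_{mj}-\partial_m g_{jk}\bigr)=\Gamma^i_{jk}(x)$, because the common factor $r^2$ from the three surviving derivatives cancels against the $r^{-2}$ in the inverse metric, and the resulting expression is by definition the Christoffel symbol of $g$ on $M$. The remaining symbols $\wt\Gamma^0_{00}$, $\wt\Gamma^i_{00}$ and $\wt\Gamma^0_{j0}$ all vanish, each because every metric derivative entering the corresponding bracket carries a $0$-index and is therefore zero.

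There is no genuine obstacle here: the statement is folklore and the proof is a bookkeeping exercise. The only points that require care are the correct form of the inverse metric (the factor $r^{-2}$ on the horizontal block), keeping precise track of which partial derivatives vanish, and remembering that the Koszul formula automatically yields a torsion-free connection, so that the symmetry $\wt\Gamma^a_{bc}=\wt\Gamma^a_{cb}$ implicit in the list need not be verified separately. Carrying out the five substitutions above reproduces exactly the formulas claimed in the statement.
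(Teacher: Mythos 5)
Your proposal is correct and is essentially identical to the paper's own proof: the paper likewise just substitutes the components of $\hat g$ into the standard coordinate formula $\widetilde\Gamma^{\tilde\imath}_{\tilde\jmath\tilde k}=\tfrac12\,\hat g^{\tilde\imath\tilde\alpha}\bigl(-\partial_{\tilde\alpha}\hat g_{\tilde\jmath\tilde k}+\partial_{\tilde\jmath}\hat g_{\tilde\alpha\tilde k}+\partial_{\tilde k}\hat g_{\tilde\jmath\tilde\alpha}\bigr)$ and evaluates it case by case for indices equal to $0$ or in $\{1,\dots,n\}$. Your write-up simply carries out explicitly the bookkeeping (inverse metric, nonvanishing derivatives, collapse of the contraction) that the paper leaves as ``an easy exercise.''
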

\begin{proof}
  It is an easy exercise: we substitute the components of $\tg$ in the
  formula:
  $$\wt \Gamma^{\ti}_{\tj \tk}=\frac{1}{2}\, \tg^{\ti\ta}
  \left({}-\dfdx{\tg_{\tj\tk}}{x^{\ta}} + \dfdx{\tg_{\ta\tk}}{x^{\tj}}
    + \dfdx{\tg_{\tj\ta}}{x^{\tk}}\right)$$ and put $\ti,\tj,\tk$ to
  be equal to 0 or to certain $i,j,k$ respectively.
\end{proof}

\subsection{Cone structure as the  existence of a  positive  solution of  \eqref{eq:hom}.}
\label{sec:hom}

\begin{lem}
  \label{thm:hom}
  Pseudo-Riemannian  $n+1$-dimensional manifold $(\wt M,\tg)$ is locally isometric to a cone manifold   if and only if,  
   for any $P\in \wt M$ there exists  
   a positive function $v$ on $U(P)$  such that
  \begin{gather}
    \label{eq:hom}
    \left\{
      \begin{array}{c}
        v_{,ij}=\tg_{ij}, \\
        v_{,i}\,v_{,}\vphantom{v}^{i}= 2 v.
      \end{array}
    \right.
  \end{gather}
\end{lem}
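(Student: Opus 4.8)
The plan is to prove both implications of Lemma~\ref{thm:hom}. For the direct implication, I would start from the cone coordinates $(r,x)$ in which $\tg$ has the form~\eqref{eq:conemetric} and simply guess the candidate $v=\tfrac{1}{2}r^2$. The second equation is immediate: since $v_{,i}=\tfrac{\partial v}{\partial r}\cdot(\text{gradient of }r)$ and $\tg^{00}=1$ while the $r$-direction is orthogonal to $M$, one computes $v_{,i}v_{,}{}^i=r^2=2v$. For the first equation $v_{,ij}=\tg_{ij}$, I would compute the Hessian $v_{,ij}=\tfrac{\partial^2 v}{\partial x^i \partial x^j}-\wt\Gamma^{\tk}_{ij}v_{,\tk}$ using the connection coefficients listed in Lemma~\ref{thm:coneconnection}. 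The only nonzero first derivative of $v$ is $v_{,0}=r$, so the only relevant Christoffel symbols are those with an upper index $0$, namely $\wt\Gamma^0_{jk}=-r\,g_{jk}$ and $\wt\Gamma^0_{00}=0$, and a short calculation gives $v_{,00}=1=\tg_{00}$, $v_{,0j}=0=\tg_{0j}$, and $v_{,jk}=r^2 g_{jk}=\tg_{jk}$, as required.

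For the converse, I would suppose $v>0$ satisfies~\eqref{eq:hom} on a neighborhood and reconstruct the cone structure. The key geometric object is the gradient vector field $\xi:=\nabla v$ (with components $v_{,}{}^i$). The first equation $v_{,ij}=\tg_{ij}$ says precisely that $\xi$ is a \emph{concircular/homothetic} field: $\wt\nabla_X\xi=X$ for every $X$, i.e. the covariant derivative of $\xi$ is the identity. This is exactly the characterizing property of the Euler (radial) vector field $r\partial_r$ on a cone, and it is the structural heart of the argument. From $\wt\nabla_X\xi=X$ one deduces that the flow of $\xi$ acts by homotheties and that $v$ is (up to the normalization fixed by the second equation) the square of a ``radial distance.'' The plan is then to define $r$ by $r:=\sqrt{2v}$, so that the level sets $\{v=\text{const}\}=\{r=\text{const}\}$ foliate the neighborhood, to take $M$ to be one such level hypersurface, and to use the flow of $\xi$ to identify the neighborhood with $\R_{>0}\times M$.

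The main technical step of the converse is to verify that in these coordinates $\tg$ takes the form $dr^2+r^2 g$ for $g$ the induced metric on the level set. The second equation $v_{,i}v_{,}{}^i=2v=r^2$ gives $|\xi|^2=r^2$, hence $|\nabla r|^2=1$, so $r$ is a distance-type function and $dr^2$ is the radial part of the metric; the gradient $\nabla r$ is the unit normal to the level sets. It remains to show the induced metric on the level set $\{r=r_0\}$ scales as $r^2$ under the flow: this follows because $\wt\nabla_X\xi=X$ forces the Lie derivative $\Lie_{\xi}\tg$ to be $2\tg$ restricted to directions tangent to the level sets, so the shape of the hypersurfaces is self-similar. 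I would make this precise by pushing the metric along the flow of the normalized field $\xi/r=\partial_r$ and checking the scaling factor.

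The hard part, and the step I would treat most carefully, is the converse's passage from the infinitesimal homothety condition $\wt\nabla_X\xi=X$ to the explicit global-on-$U(P)$ product coordinates; one must check that $v$ has no critical points (which follows since $|\nabla v|^2=2v>0$ forces $\nabla v\ne 0$ wherever $v>0$), that the level sets are genuine embedded hypersurfaces, and that the flow of $\xi$ is complete enough to sweep out a full neighborhood of the form $\R_{>0}\times M$ — here ``locally isometric'' in the statement is what lets me restrict to a neighborhood and avoid global completeness issues. The direct implication, by contrast, is the routine computation sketched above and should present no obstacle.
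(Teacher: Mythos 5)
Your proposal is correct and follows essentially the same route as the paper: the direct implication via $v=\tfrac12 r^2$ in cone coordinates, and the converse by setting $r=\sqrt{2v}$, noting $r_{,i}r_{,}{}^{i}=1$ so that $dr$ never vanishes, taking a level hypersurface, and flowing along the gradient to produce coordinates $(r,x)$ in which one checks $\tg_{00}=1$, $\tg_{0j}=0$ and the tangential part equals $r^2 g$. The only cosmetic difference is that you derive the $r^2$-scaling from the homothety property $\Lie_{\nabla v}\,\tg=2\tg$ of the flow, whereas the paper integrates the equivalent ODE $\partial_r\tg_{ij}=\tfrac{2}{r}\tg_{ij}$ obtained from the Hessian of $r$; these are the same computation in different parametrizations.
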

\begin{proof}
  $\Rightarrow$ Let $(\wt M,\tg)$ be locally the cone over 
  $(M,g)$. Then there exist coordinates $(r,x)$, such that $\tg$ has
  the form~\eqref{eq:conemetric}. By direct calculations we see that 
  the function $v=\frac{1}{2} r^2$  satisfies \eqref{eq:hom}.

  $\Leftarrow$ Suppose  $v$ is  a positive function in $U(P)$ satisfying~\eqref{eq:hom}. We consider 
   $r=\sqrt{2v}$ and its gradient $r_{,}^{ \ i}$.   By direct calculation, we see $$r_{,i}\,
  r_,\vphantom{r}^{i}=\frac{v_{,i}}{\sqrt{2v}}\cdot
  \frac{v_,\vphantom{v}^{i}}{\sqrt{2v}}=\frac{1}{2v}\cdot 2 v=1. $$
  This  in particular implies that  the differential of $r$ nowhere vanishes.

  Consider the $n$-dimensional hypersurface $S$ defined by the equation
  $r=r(P)$. Let $(x_1,\dots,  x_n)$ be a local coordinate system  on $S$.

  \par Let us now use $r$ and the coordinates $(x_1,...,x_n)$ to construct a  coordinate system  in a neighborhood $P$. More precisely,
  for every point $Q=(x_1,\dots,x_n)\in S$ there exists the unique curve
  $\gamma_Q:(r(P)-\varepsilon, r(P) + \varepsilon)\to \wt M  $ such that  $$\dot{\gamma_Q}^i(t)=r_{,}^{\ i}\mbox{ and }
  \gamma_Q(r(P))=Q.$$ 
 Clearly, the mapping $(t, Q)\mapsto \gamma_Q(t)$ is a local diffeomorphism  and therefore defines a coordinate system $(t,x_1,...,x_n)$ in a neighborhood of $P$. 
   Because $r_{,}^{\ i}$ is  the gradient of $r$,  the value of the function $r$ at the 
  points $\gamma_Q(t)$ is equal to $r$, so this coordinate system actually reads $(r,x_1,\dots,x_n)$.

  Let us now  show that in these coordinates the metric $\tg$ has
  the form~\eqref{eq:conemetric}.  Using~\eqref{eq:hom},  we calculate
  $$r_{,ij} = \wt \nabla_j \left(\frac{v_{,i}}{r}\right)=
  \frac{1}{r}(\tg_{ij} - r_{,i} r_{,j}).$$

  By the construction of the  coordinates $(r,x)$, we have  
  \begin{equation}\label{-2}\partial_r \tg_{ij}=\Lie_{\partial_r}(g_{ij})=2 r_{,ij}
  =\frac{2}{r}(\tg_{ij} - r_{,i} r_{,j})\end{equation}
  For $i,j\neq 0$  the  equation \eqref{-2} reads  $\partial_r
  \tg_{ij}=\frac{2}{r}\tg_{ij}$. This equation could be viewed as an ODE; solving it 
  we obtain
  $\tg_{ij}(r,x)= r^2 g_{ij}(x)$, where $g_{ij}(x)$ is the restriction of the metric $\hat g$ to $S$ written  in the coordinates $x_1,...,x_n$.    Since the  $r_,^{\ i}$ is the gradient of $r$ and therefore is    orthogonal to $\{(r,x_1,...,x_n)\mid r=\const\}$, we have  
  $\tg_{0j}=\tg_{i0}=0.$ 
  Now,  $\tg_{00}=r_{,i} r_,\vphantom{r}^{i}=1$. Combining all these, we see that  in the coordinates
  $(r,x_1,\dots,x_n)$ the metric $\tg$ is given by~\eqref{eq:conemetric}.
\end{proof}

\begin{rem} From the first equaiton of \eqref{eq:hom} we see that  a
 solution $v$ of \eqref{eq:hom} has nonzero differential at every point of a certain everywhere dense open subset of $M$. Then, $v$ is not zero at every point of   a certain everywhere dense open subset of $M$. By Lemma \ref{thm:hom},  near  the points  where $v$ is positive, $g$ is isometric  to a cone metric. Since, for a negative solution  $v$ for $g$ 
  the  (positive)  function  $-v$ is a solution of \eqref{eq:hom} for  $g'= -g$, the metric $-g$ is locally a cone metric. 
\end{rem}

\begin{rem} Actually, the first equation of \eqref{eq:hom} almost implies the second. Indeed, if $v$
satisfies the first equation of \eqref{eq:hom}, then the function 
$\tfrac{1}{2} v_,^{\ i} v_{,i} $ has differential $\left(\tfrac{1}{2} v_,^{\ i} v_{,i}\right)_{,k}= v_,^{\ i} g_{ ik } = v_{,k}$ implying that  for a certain constant $C$  the function $v + C$ satisfies \eqref{eq:hom}. Moreover, if a 1-form $v_i$ satisfies  $v_{i,j}= g_{ij}$, then it is closed so  there exists a function $v$ such that $v_{,i}=v_i$  provided the manifold is simply connected. 
\end{rem} 

\subsection{Properties of  the cone vector field.}
By Lemma~\ref{thm:hom}, cone manifolds  are (locally) 
characterized by the existence  of  a positive  function $v$  satisfying  \eqref{eq:hom}. 
Its  gradient  $\vec v:= v_,^{\ i}$ will be called a {\it cone vector field.} 

\begin{lem}
\label{thm:Rv0}
Let $(M,g)$ be a $2$-dimensional manifold, and let $v_i$  be a $1$-form such that   $v_{i,j}= g_{ij}$.  Then,  $M$ is flat.
\end{lem}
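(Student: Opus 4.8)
The plan is to extract the curvature consequence of the hypothesis $v_{i,j}=g_{ij}$ by commuting covariant derivatives, and then to exploit the very rigid form of the Riemann tensor in dimension two.

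First I would differentiate the hypothesis once more. Since the Levi-Civita connection is metric we have $g_{ij,k}=0$, and therefore $v_{i,jk}=(g_{ij})_{,k}=0$ and likewise $v_{i,kj}=(g_{ik})_{,j}=0$. Hence the commutator $v_{i,jk}-v_{i,kj}$ vanishes identically. On the other hand, the Ricci identity expresses this commutator through the Riemann tensor contracted with $v$, namely $v_{i,jk}-v_{i,kj}=-R^l_{\ ijk}v_l$ (up to the chosen sign convention, which is irrelevant here since both sides turn out to be zero). Combining the two gives the pointwise algebraic identity $R^l_{\ ijk}v_l=0$, or, after lowering the free index, $v^m R_{mijk}=0$.

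Next I would insert the two-dimensional form of the curvature tensor. In dimension two the whole Riemann tensor is determined by the Gaussian curvature $K$ via $R_{mijk}=K\left(g_{mj}g_{ik}-g_{mk}g_{ij}\right)$. Substituting this into $v^m R_{mijk}=0$ yields $K\left(v_j g_{ik}-v_k g_{ij}\right)=0$. Contracting with $g^{ik}$ and using $g^{ik}g_{ik}=2$ together with $g^{ik}g_{ij}v_k=v_j$ collapses this to the scalar relation $K\,v_j=0$ for every $j$; that is, at each point either $K=0$ or the covector $v$ vanishes.

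It then remains to upgrade this pointwise dichotomy to $K\equiv 0$ on all of $M$. Here I would observe that $v$ cannot vanish on any open set, since on such a set we would have $v_{i,j}=0$, contradicting $v_{i,j}=g_{ij}$ and the nondegeneracy of $g$. Thus the open set $\{p:v(p)\neq 0\}$ is dense, and on it $K=0$; by continuity of $K$ this forces $K\equiv 0$ everywhere, so $g$ is flat. The computation is essentially routine, and the only place that needs a little care — the step I would flag as the main (minor) obstacle — is precisely this last density argument, i.e. making sure the conclusion $K=0$ is not lost on the closed set where $v$ happens to vanish. Everything else is a direct application of the Ricci identity combined with the special structure of surfaces.
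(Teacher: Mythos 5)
Your proof is correct and takes essentially the same route as the paper: both arguments commute covariant derivatives of $v_i$ (using $v_{i,jk}=\nabla_k g_{ij}=0$) to obtain $v^m R_{mijk}=0$, observe that $v$ cannot vanish on an open subset, and conclude flatness on the resulting dense open set, hence everywhere by continuity. The only difference is cosmetic: in the endgame the paper picks a basis with $\vec v$ as first vector and kills all components of $R$ via its symmetries, whereas you substitute the two-dimensional form $R_{mijk}=K\left(g_{mj}g_{ik}-g_{mk}g_{ij}\right)$ and contract with $g^{ik}$ --- equivalent linear algebra, with the small merit on your side that your density argument uses only the stated hypothesis $v_{i,j}=g_{ij}$ rather than the paper's appeal to \eqref{eq:hom}.
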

\begin{proof}
  By the second equation of \eqref{eq:hom},  $v_i\neq 0$ on an everywhere dense open subset of $M$.  
Take a point $P\in \wt  M$ from this subset and choose a basis in $T_p\wt M$ such   that  in this basis $\vec v:= v_,^{\ i}=\left(
    \begin{array}{cc}
      1\\
      0
    \end{array}
  \right)$. By the definition of the Riemannian curvature $R^i_{\ jk\ell}$,  we have 
  \begin{gather}
    \label{eq:cone-in-the-kernel}
    R^i_{\ 1 k\ell }=R^i_{\ jk\ell} v^j{=}\nabla_k\nabla_\ell v^j -
    \nabla_k\nabla_l v^j   \stackrel{\eqref{eq:hom} }=   \nabla_k \delta^j_\ell -
    \nabla_\ell \delta^j_k =  0 \quad \mbox{for all $i,k,l=1, 2.$}
  \end{gather}
  Then, by the symmetries of the Riemannian curvature tensor we see that the
  component $R_{ijk\ell}=0$, when $i,j,k$ or $\ell$ is equal to $1$. Since
  the only remaining component $R_{2222}$ is also zero, $R^i_{\
    jk\ell}\equiv 0$ and the metric $g$ is flat.
\end{proof}

\begin{lem}
  \label{thm:cone-not-parallel}
  Assume  $v$ satisfies  \eqref{eq:hom}.   Then, for any parallel vector field $u\ne 0$, for 
   every point $P\in M$ such that the  Riemannian curvature
  tensor $R^i_{\ jk\ell} $ is not zero,  the vectors  $\vec v:= v_,^{\ i}$ and $u^i$ 
   are not proportional at $P$.
\end{lem}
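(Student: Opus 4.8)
The plan is to prove the contrapositive: if at $P$ the vectors $\vec v$ and $u$ are proportional, then $R^i_{\ jkl}(P)=0$. Two facts drive the argument, both instances of the Ricci identity. First, exactly as in the computation~\eqref{eq:cone-in-the-kernel}, the first equation of~\eqref{eq:hom} gives $\nabla_j v^i=\delta^i_j$, and differentiating this once more yields $\nabla_k\nabla_j v^i=0$; the Ricci identity then shows that the cone vector field lies in the kernel of the curvature, $R^i_{\ jkl}v^j=0$, globally. Second, since $u$ is parallel we have $\nabla_k\nabla_l u^i-\nabla_l\nabla_k u^i=0$, so the Ricci identity gives $R^i_{\ jkl}u^j=0$, again globally. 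Note that only the first equation of~\eqref{eq:hom} is used.

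Next I would differentiate both identities covariantly. Differentiating $R^i_{\ jkl}v^j=0$ and using $\nabla_m v^j=\delta^j_m$ produces the pointwise identity
\begin{equation*}
  R^i_{\ mkl}=-\,(\nabla_m R^i_{\ jkl})\,v^j,
\end{equation*}
valid everywhere. Differentiating $R^i_{\ jkl}u^j=0$ and using $\nabla_m u^j=0$ produces instead
\begin{equation*}
  (\nabla_m R^i_{\ jkl})\,u^j=0 .
\end{equation*}
Thus the first covariant derivative of the curvature is annihilated in its second slot by the parallel field $u$, whereas the undifferentiated curvature is recovered from that same first derivative contracted against $\vec v$; this contrast between the parallel field and the non-parallel cone field is the whole point of the argument.

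Finally I would combine the two relations at $P$. A nonzero parallel vector field is nowhere vanishing, so $u(P)\neq 0$; hence proportionality of $\vec v$ and $u$ at $P$ means $v^j(P)=c\,u^j(P)$ for some scalar $c$ (the value $c=0$, i.e.\ $\vec v(P)=0$, is allowed and causes no difficulty). Substituting this into the first displayed identity and invoking the second gives
\begin{equation*}
  R^i_{\ mkl}(P)=-\,(\nabla_m R^i_{\ jkl})(P)\,v^j(P)=-c\,(\nabla_m R^i_{\ jkl})(P)\,u^j(P)=0,
\end{equation*}
which is the desired conclusion. I do not expect a genuine obstacle here; the only points requiring care are the sign and index bookkeeping in the Ricci identity, so that $\vec v$ and $u$ are contracted into the \emph{same} slot of $R^i_{\ jkl}$, and the remark that the degenerate case $\vec v(P)=0$ is subsumed in the single relation $v^j=c\,u^j$, so that no separate treatment of a zero of the cone field is needed.
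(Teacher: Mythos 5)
Your proof is correct and takes essentially the same route as the paper's: both arguments differentiate the identity $R^i_{\ jkl}v^j=0$ (coming from $\nabla_j v^i=\delta^i_j$) to recover the curvature from $\nabla R$ contracted with $\vec v$, and contrast this with $(\nabla_m R^i_{\ jkl})u^j=0$ obtained by differentiating the analogous identity for the parallel field $u$. The only differences are cosmetic: you argue the contrapositive and keep the contraction in the second slot, while the paper argues directly and transfers the contraction to the first slot via the curvature symmetries.
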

\begin{proof}
  As we explained  in the proof of Lemma \ref{thm:Rv0}, see~\eqref{eq:cone-in-the-kernel} there,  we have $v_{,i} R^i_{\ jk\ell}=0$.
  We  covarinatly differentiate this equation to obtain:
  $$v_{i,m} R^i_{\ jk\ell}+v_{,i} R^i_{\ jk\ell,m}=0\ \ \textrm{implying} \ \  R_{mjk\ell}+v_{,i} R^i_{\ jk\ell,m}=0.$$ 

  Since $R_{mjk\ell}\ne 0$ at $P$, we have  \begin{equation} \label{-1} v_{,i} R^i_{\ jk\ell,m}\neq 0.\end{equation}   But  by definition of $R^i_{\ jk\ell}$, for any parallel vector field $u$ we have  
   $u^j R^i_{\ jk\ell} =\nabla_k\nabla_\ell u^j -
    \nabla_k\nabla_{\ell} u^j =0$. Covariantly 
    differentiating the last equation and using the symmetries of the curvature tensor, we obtain  $u_i R^i_{\ jk\ell,m} = 0$.  Combining this with 
    \eqref{-1}, we see  that  the vectors $u$ and $\vec v$ are not proportional. 
\end{proof}

\subsection{Direct product and decomposition of cone manifolds.}

\begin{lem}
  \label{thm:split}
  Consider   the direct product $$(\wt M,\tg)=(M_1,\overset{1}{g})\times (M_2,\overset{2}g)$$
  and assume that a function $v$  on $\wt M $ satisfies \eqref{eq:hom}. 
  
  Then, for any $s=1,2$ there exists a  function $\overset{s}v$ von $M_s$  satisfying 
   \eqref{eq:hom} (w.r.t. the metric $\overset{s}g$ on $M_s$). The  differential 
    $\overset{s}{v}_{, i}=d \overset{s}{v}$  of the  function $\overset{s}{v}$ is not zero at almost every point.  Moreover,  $\overset{s}{v}_{, i} R^i_{\ jkm} =  0$, where $R$ is the curvature tensor of $\hat g$.    
\end{lem}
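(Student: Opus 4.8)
The plan is to use the product structure of $(\wt M,\tg)$, in which the Levi-Civita connection and the curvature tensor have no components mixing the two factors, to show that $v$ is, up to additive constants, a sum of solutions of \eqref{eq:hom} on the two factors. Choose local coordinates $(y^1,\dots)$ on $M_1$ and $(z^1,\dots)$ on $M_2$ and write $\alpha,\beta,\dots$ for indices of the first group and $\mu,\nu,\dots$ for the second. Because the product connection has no Christoffel symbols with one lower index from each group, the mixed components of the first equation of \eqref{eq:hom} reduce to $v_{,\alpha\mu}=\partial_\alpha\partial_\mu v=\tg_{\alpha\mu}=0$. Hence $\partial_\alpha v$ does not depend on $z$ and $\partial_\mu v$ does not depend on $y$, so $v=V_1(y)+V_2(z)$ for suitable functions $V_s$ on $M_s$.

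For purely first-group indices the vanishing of the mixed Christoffel symbols again shows that the covariant Hessian of $v$ on $\wt M$ equals the covariant Hessian of $V_1$ on $(M_1,\overset{1}{g})$ (here one uses $\partial_\gamma v=\partial_\gamma V_1$); thus the $M_1$-part of $v_{,ij}=\tg_{ij}$ reads $(V_1)_{,\alpha\beta}=\overset{1}{g}_{\alpha\beta}$, i.e. $V_1$ satisfies the first equation of \eqref{eq:hom} on $M_1$, and likewise $V_2$ on $M_2$. By the Remark after Lemma \ref{thm:hom}, which observes that the first equation of \eqref{eq:hom} forces $\tfrac12(V_s)_{,i}(V_s)_,^{\ i}-V_s$ to be constant, adding a suitable constant to $V_s$ yields a function $\overset{s}{v}$ satisfying \emph{both} equations of \eqref{eq:hom} on $M_s$. (Equivalently, since $\tg$ is block-diagonal one may split $v_{,i}v_,^{\ i}=2v$ directly into its $y$- and $z$-dependent parts.) If $d\overset{s}{v}$ vanished on a nonempty open set, its covariant Hessian would vanish there, contradicting $\overset{s}{v}_{,\alpha\beta}=\overset{s}{g}_{\alpha\beta}\neq 0$; hence $d\overset{s}{v}\neq 0$ on an everywhere dense open subset of $M_s$.

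It remains to prove $\overset{s}{v}_{,i}R^i_{\ jkm}=0$. As shown in the proof of Lemma \ref{thm:cone-not-parallel} (see \eqref{eq:cone-in-the-kernel}), the cone function satisfies $v_{,i}R^i_{\ jkm}=0$ for the curvature $R$ of $\tg$. Now $v_{,i}=\overset{1}{v}_{,i}+\overset{2}{v}_{,i}$ (adding constants does not change gradients), and $R^i_{\ jkm}$, being the curvature of a direct product, is nonzero only when $i,j,k,m$ lie in a single factor. For $j,k,m$ in the first group the term $\overset{2}{v}_{,i}R^i_{\ jkm}$ vanishes, since $\overset{2}{v}_{,\alpha}=0$; hence $\overset{1}{v}_{,i}R^i_{\ jkm}=v_{,i}R^i_{\ jkm}=0$. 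For $j,k,m$ not all in the first group $R^i_{\ jkm}=0$ anyway, so $\overset{1}{v}_{,i}R^i_{\ jkm}=0$ for all $j,k,m$, and symmetrically for $\overset{2}{v}$. The only step requiring real care is the additive splitting $v=V_1+V_2$; everything else is a routine consequence of the absence of mixed terms in the product connection and curvature.
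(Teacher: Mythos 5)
Your proof is correct in substance and follows essentially the same route as the paper: the paper splits the 1-form $dv$ into its orthogonal projections onto the two factors and sets $\overset{s}{v}:=\tfrac12\overset{s}{v}_i\overset{s}{v}{}^i$ (which is exactly your ``add a constant via the Remark'' normalization), then uses the block structure of the product curvature, just as you do.

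One small slip in your curvature step should be fixed: the claim that $R^i_{\ jkm}=0$ whenever $j,k,m$ are \emph{not all} in the first group is false in the subcase where $j,k,m$ all lie in the \emph{second} group, since there the curvature of the product can be nonzero (with $i$ also in the second group). The conclusion $\overset{1}{v}_{,i}R^i_{\ jkm}=0$ still holds in that subcase, but by the same contraction argument you use elsewhere: $R^i_{\ jkm}$ with $j,k,m$ in the second group vanishes unless $i$ is in the second group, and there $\overset{1}{v}_{,i}=0$.
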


\begin{rem} 
Withing the whole paper we understand ``almost everywhere'' or ``at almost every point'' in the topological sense: a property is fulfilled almost everywhere or at almost every point if the set of the points where it is fulfilled is  open and everywhere dense. 
\end{rem} 

\begin{proof}
  Let us consider the decomposition $v_{,i} = \overset{1}{v}_i+\overset{2}{v}_i\, ,$
  where $\overset{s}{v}_i$ is the orthogonal projection of $v_i$ to
  $TM_s$.  Let us choose coordinates $x_1,...,x_n$ on $M$  such that
  $x_1,\dots, x_k$ are coordinates on $M_1$ and $x_{k+1},\dots, x_n$ are
  coordinates on $M_2$.  For $i,j\leq k$ we have 
  $$\overset{1}{\nabla}_j \overset{1}{v}_i = \wt \nabla_j v_i=g_{i
    j}=\overset{1}{g}_{i j}$$
   implying that the components of $\overset{1}{v}_i$ depends  on 
   the coordinates  $x_1,\dots, x_k$  only and could be viewed as a 1-form on $M_1$. 
   Next, consider  the function $\overset{1}v:=  \tfrac{1}{2} \overset{1}v_i  {\overset{1}v}{}^i$ which also depend on the coordinates  $x_1,\dots, x_k$  only and  can be viewed as a function on $M_1$. We have 
   $$\overset{1}{v}_{,i}= \tfrac{1}{2} \left(\overset{1}v_k  {\overset{1}v}{}^k\right)_{,i}= \overset{1}{g}_{ik}\overset{1}{v}{}^k= \overset{1}v_i.$$   
Thus,  $\overset{1}{v}$ satisfies \eqref{eq:hom}. Similarly we can  prove the existence of a function 
   $\overset{2}v$ on $M_2$  satisfying  \eqref{eq:hom}.   
   
   The first equation of \eqref{eq:hom} implies that the differentials of $\overset{s}v_i$ are nonzero at almost every point. 
  
  Since  the curvature tensor of $\hat g$ is the direct sum of the curvature  tensors of $\overset{1}{g}$ and  $\overset{2}{g}$, and since as   we explained  in the proof of Lemma \ref{thm:Rv0},  $\overset{s}{v}_{, i} $ satisfies $\overset{s}{v}_{, i} \overset{s}R{}^i_{\ jkm} =  0$, where $\overset{s}R{}^i_{\ jkm}$ is   the curvature tensor  of $\overset{s}{g}$, we have $\overset{s}{v}_{, i} R^i_{\ jkm} =  0$.
\end{proof}

\begin{rem} 
In Lemma \ref{thm:split} we do not require that the functions (which were denoted by $\overset{1}v$, $\overset{2}{v}$ in the  proof)  
on the manifolds  $M_1, M_2$ are positive. It is easy to construct an example such that $\overset{1}v$ is positive and  $\overset{2}{v}$ is negative. 
\end{rem}

\begin{lem}
  \label{thm:glue}
 Assume    $(\wt
  M,\tg)$ is the  direct product of two manifolds,  $$(\wt
  M,\tg)=(M_1,\overset{1}g)\times (M_2,\overset{2}g),  $$ where 
 every  $(M_i,\overset{i}g)$, $i=1,2$,  is a    cone manifold. Then,
  $(\wt M, \tg)$ admits a positive function satisfying \eqref{eq:hom}.
\end{lem}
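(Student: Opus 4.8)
The plan is to construct the required function explicitly as the sum of the two cone functions. Since each $(M_s,\overset{s}g)$ is a cone manifold, Lemma~\ref{thm:hom} furnishes a \emph{positive} function $\overset{s}{v}$ on $M_s$ satisfying \eqref{eq:hom} with respect to $\overset{s}g$. I would set $v:=\overset{1}{v}+\overset{2}{v}$, regarding each summand as a function on $\wt M=M_1\times M_2$ via pullback along the corresponding projection. Positivity of $v$ is then immediate, being a sum of two positive functions.

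It remains to check the two equations of \eqref{eq:hom} for $v$. The structural facts I would exploit are that, for a direct product, the Levi-Civita connection $\wt\nabla$ does not mix the factors (its Christoffel symbols vanish unless all three indices lie in the same $M_s$), and that $\overset{s}{v}$ depends only on the coordinates of $M_s$. For the Hessian equation $v_{,ij}=\tg_{ij}$ I would argue by the factor membership of the indices: if both $i,j$ lie in $M_1$, the covariant Hessian of $v$ reduces to $\overset{1}{v}_{,ij}$ computed with the $M_1$-connection, which equals $\overset{1}{g}_{ij}=\tg_{ij}$; the case of both indices in $M_2$ is symmetric; and for mixed indices both the ordinary second derivatives and the Christoffel terms vanish, giving $v_{,ij}=0=\tg_{ij}$.

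For the norm equation I would use that the gradient of $\overset{1}{v}$ lies in $TM_1$ and that of $\overset{2}{v}$ in $TM_2$, and that these subspaces are $\tg$-orthogonal. Hence the cross terms drop and $v_{,i}\,v_{,}^{\ i}=\overset{1}{v}_{,i}\,\overset{1}{v}_{,}^{\ i}+\overset{2}{v}_{,i}\,\overset{2}{v}_{,}^{\ i}=2\overset{1}{v}+2\overset{2}{v}=2v$, where the middle equality is the second equation of \eqref{eq:hom} applied on each factor.

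The one point deserving attention---and the contrast with Lemma~\ref{thm:split} I would stress---is positivity. In the decomposition direction the factor functions obtained by orthogonal projection need not be positive, as the remark following Lemma~\ref{thm:split} notes. Here, by contrast, we are free to \emph{choose} positive solutions $\overset{s}{v}$ on each factor, precisely because each $M_s$ is assumed to be a cone manifold; their sum is then positive with no further work. Consequently there is no genuine obstacle, and the lemma follows once the two routine computations above are recorded.
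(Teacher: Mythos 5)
Your proof is correct and follows essentially the same route as the paper: take the positive solutions of \eqref{eq:hom} on each factor (guaranteed by Lemma~\ref{thm:hom} since each factor is a cone manifold), add them, and verify the Hessian equation blockwise and the norm equation via $\tg$-orthogonality of the two gradients. The extra details you supply (vanishing of mixed Christoffel symbols, the contrast with Lemma~\ref{thm:split} regarding positivity) are accurate but do not change the argument.
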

\begin{proof}
  By Lemma~\ref{thm:hom},  there exists  positive functions
  $u(x)$ on $M_1$ and $v(y)$ on $M_2$ such that
  \begin{gather}
    \begin{array}{cc}
      u_{,ij}=\overset{1}g_{ij},& v_{,ij}=\overset{2}g_{ij}, \\
      u_{,i}u^{,i}= 2 u,& v_{,i}v_{,}^{\ i}= 2 v.
    \end{array}
  \end{gather}
  Then, the function $w(x,y)=u(x)+v(y)$ is a positive function on $\wt M$
  and satisfies~\eqref{eq:hom}. Indeed,
  $$w_{,i j}= \left(
    \begin{array}{cc}
      u_{,ij}& 0 \\
      0& v_{,ij}
    \end{array}
  \right)=\tg_{ij}$$
  and $w_{,i}w^{,i}=u_{,i}u_,^{\ i}+v_{,i}v_,^{\ i}=2u + 2v = 2w$. 
\end{proof}

\begin{rem} Both Lemmas above are true for the direct products of arbitrary number of manifolds: the proofs survive without any changes.
\end{rem}

\subsection{Solutions of the extended system with $B\neq 0$ as
  parallel  $(0,2)$-tensor fields on the cone.}
\label{sec:Bneq0}

Let $(M,g)$ be a connected  $n\ge 3$-dimensional pseudo-Riemannian manifold with $D(g)\geq
3$. We consider the extended system~\eqref{eq:mg+Ba} and assume
$B\neq 0$.

By  Theorem~\ref{thm:mg+Ba}, the degree of mobility of $g$ is equal to the dimension of the space of solutions of~\eqref{eq:mg+Ba}. Our goal is to construct an isomorphism between the space of the solutions  
  of ~\eqref{eq:mg+Ba} and the space of 
 parallel symmetric $(0,2)$-tensor fields on a  cone manifold.

First we renormalize the metric in order to obtain $B=-1$.

\begin{lem}
  \label{thm:two_cases} Let  $(a_{ij}, \lambda_i, \mu)$ satisfy ~\eqref{eq:mg+Ba} with $B\ne 0$.  
  Then, $(a'_{ij}:= -Ba,  ,\lambda'_i:= \lambda_i, \mu':= -\tfrac{1}{B} \mu)$ satisfies  ~\eqref{eq:mg+Ba}   for the  metric $g'= -\tfrac{1}{B}g$ and $B(g')= B'= -1$.
\end{lem}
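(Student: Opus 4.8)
The plan is to use the elementary fact that a constant rescaling $g'=c\,g$ (with $c\in\R\setminus\{0\}$) has the \emph{same} Levi-Civita connection as $g$; hence the comma, i.e.\ covariant differentiation, is literally the same operator for $g$ and for $g'$. The whole lemma then reduces to substituting the rescaled tensors into the three lines of~\eqref{eq:mg+Ba} written for $g'$ and checking that they collapse into the three lines valid for $g$.

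First I would pin down how the constant $B$ transforms under $g\mapsto c\,g$, since this is what dictates the correct value of $c$. Writing $g_{jk}=c^{-1}g'_{jk}$ in the first line of~\eqref{eq:mg+Ba} shows that $(a,\,c^{-1}\lambda)$ solves the first line with respect to $g'$; substituting $\lambda'=c^{-1}\lambda$ into the second line gives
\[
\lambda'_{i,j}=c^{-1}\lambda_{i,j}=c^{-1}\mu\, g_{ij}+c^{-1}B\,a_{ij}=c^{-2}\mu\,g'_{ij}+(B/c)\,a_{ij},
\]
so that the structure constant of $g'$ is $B(g')=B/c$ and $\mu'=c^{-2}\mu$. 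To reach $B(g')=-1$ one is therefore forced to take $c=-B$, i.e.\ $g'=-B\,g$. (With $c=-\tfrac1B$ one would instead get $B(g')=-B^2$; I expect the factor $-\tfrac1B$ in the statement to be a typo for $-B$.)

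Second, I would fix the overall normalization: since the solution space of~\eqref{eq:mg+Ba} is linear, multiplying the solution $(a,\,-\lambda/B,\,\mu/B^2)$ found above by the constant $-B$ produces exactly $(a'_{ij},\lambda'_i,\mu')=(-B\,a_{ij},\,\lambda_i,\,-\mu/B)$, the normalization in the statement, without changing $B(g')=-1$. It then remains to substitute these three tensors, together with $g'=-B\,g$ and $B'=-1$, into the three lines of~\eqref{eq:mg+Ba} and check that each reduces to the corresponding line for $g$; every check is a one-line cancellation (for instance the third reads $(-\mu/B)_{,i}=-\tfrac1B(2B\lambda_i)=-2\lambda_i=2B'\lambda'_i$). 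There is no deep obstacle here: the lemma is a bookkeeping statement, and the only point genuinely requiring care — the thing I would triple-check — is the direction of the scaling $B\mapsto B/c$, since confusing it with $B\mapsto cB$ is exactly what produces the discrepancy between $g'=-B\,g$ and $g'=-\tfrac1B\,g$.
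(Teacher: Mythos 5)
Your verification is correct and is essentially the same computation the paper performs: the paper's proof is literally ``substitute and see that it is fulfilled,'' resting, as yours does, on the fact that a constant rescaling of the metric leaves the Levi-Civita connection, and hence all comma-derivatives, unchanged. More importantly, you are right that the statement as printed is inconsistent, and your diagnosis of the scaling law is the correct one. For any rescaling $g'=c\,g$, if one writes $(a',\lambda',\mu')=(\alpha a,\beta\lambda,\gamma\mu)$, the first equation of \eqref{eq:mg+Ba} forces $\alpha=c\beta$, and then the second and third equations force $B(g')=\beta B/\alpha=B/c$ \emph{independently} of how the solution triple is normalized; hence $g'=-\tfrac{1}{B}\,g$ yields $B(g')=-B^{2}$, which equals $-1$ only when $B=\pm1$, while $g'=-B\,g$ yields $B(g')=-1$, and under this scaling the triple $(a',\lambda',\mu')=(-Ba,\,\lambda,\,-\tfrac{1}{B}\mu)$ of the statement does satisfy the primed system. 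So the factor $-\tfrac{1}{B}$ in front of $g$ in the lemma is indeed a typo for $-B$. The paper itself corroborates your transformation law $B(cg)=B/c$: in Section~\ref{gen} (proof of Theorem~\ref{th:apl}) it invokes this very lemma to assert that for $\bar g=k\cdot g$ one has $\bar B=\tfrac{1}{k}B$. Note finally that the typo is harmless for the lemma's subsequent uses --- the reduction to $B=-1$ in Section~\ref{sec:Bneq0} and the signature claim in Section~\ref{sec:nilpotent} --- since $-B$ and $-\tfrac{1}{B}$ have the same sign, but your corrected form is the one that should be used.
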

\begin{proof} We substitute  $(a'_{ij}:= -Ba,  ,\lambda'_i:= \lambda_i, \mu':= -\tfrac{1}{B} \mu)$ and $g'= -\tfrac{1}{B}g$ in the system 
  \begin{gather}
    \label{eq:m'g'+B'a'}
    \left\{
      \begin{array}{ll}
        a'_{i j,k}&=\lambda'_i g'_{j k} + \lambda'_j g'_{i k}\\
        \lambda'_{i,j}&=\mu' g'_{i j}-a'_{i j}\\
        \mu'_{,i}&=-2\lambda'_i
      \end{array}
    \right.
  \end{gather}
 and see that it is fulfilled.
\end{proof}

Thus, if $B\ne 0$,  we can assume $B=-1$. In this setting 
the  system~\eqref{eq:mg+Ba}  reads
\begin{gather}
  \label{eq:mg-a}
  \left\{
    \begin{array}{ll}
      a_{i j,k}&=\lambda_i g_{j k} + \lambda_j g_{i k}\\
      \lambda_{i,j}&=\mu g_{i j}-a_{i j}\\
      \mu_{,i}&=-2\lambda_i
    \end{array}
  \right.
\end{gather}

\begin{thm}[\cite{Mounoud2010}]
  \label{thm:parallel}
  If a symmetric tensor field $a_{ij}$ on  $(M,g)$  satisfies  \eqref{eq:mg-a}, 
  then the $(0,2)$-tensor  field $A$ on  $(\wt M, \tg)$ defined  in the local coordinates $(r,x)$ 
  by the following (symmetric) matrix:
  \begin{gather}
    \label{eq:A}
    A = \left(
      \begin{array}{c|ccc}
        \mu(x) & -r\lambda_1(x) &\dots &-r\lambda_n(x)\\
        \hline 
        -r\lambda_1(x)& & & \\
        \vdots& & r^2 a(x)& \\
        -r\lambda_n(x)& & & 
      \end{array}
    \right),
  \end{gather}
  is parallel  with respect to the Levi-Civita connection
  of $\tg$.

 Moreover, if a symmetric $(0,2)$-tensor $A_{ij}$ on $\wt M$ is parallel, then in the  cone coordinates it has the
  form~\eqref{eq:A}, where  $(a_{ij}, \lambda_i, \mu)$ 
  satisfy~\eqref{eq:mg-a}.
\end{thm}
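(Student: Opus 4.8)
The plan is to verify both claims by a direct computation in the cone coordinates $(r,x)$, using the explicit Christoffel symbols $\wt\Gamma^{\ti}_{\tj\tk}$ from Lemma~\ref{thm:coneconnection}. For a symmetric $(0,2)$-tensor the parallelity condition is $\wt\nabla_{\tk}A_{\ti\tj}=\partial_{\tk}A_{\ti\tj}-\wt\Gamma^{\ta}_{\tk\ti}A_{\ta\tj}-\wt\Gamma^{\ta}_{\tk\tj}A_{\ti\ta}=0$, and since each of the three free indices is either $0$ or spatial, there are only a handful of essentially different cases to check.

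For the first (direct) claim I would substitute the components of $A$ from~\eqref{eq:A}, namely $A_{00}=\mu$, $A_{0i}=-r\lambda_i$, $A_{ij}=r^2a_{ij}$ with all coefficients functions of $x$ only, into this expression. The cases in which the differentiated index $\tk$ equals $0$ turn out to be purely algebraic identities: the $\partial_r$-derivatives of the explicit powers of $r$ cancel exactly against the terms produced by $\wt\Gamma^i_{j0}=\tfrac1r\delta^i_j$ and $\wt\Gamma^0_{jk}=-rg_{jk}$, independently of any differential equation. The substance of the statement lies in the three cases with $\tk=k$ spatial. The key observation is that the ordinary partial derivatives $\partial_k$ combine with the terms coming from $\wt\Gamma^i_{jk}=\Gamma^i_{jk}$ to reconstruct the covariant derivatives on $(M,g)$; concretely $\wt\nabla_k A_{00}=0$ collapses to $\mu_{,k}+2\lambda_k=0$, the component $\wt\nabla_k A_{0j}=0$ to $-r(\lambda_{j,k}+a_{jk}-\mu g_{jk})=0$, and $\wt\nabla_k A_{ij}=0$ to $r^2(a_{ij,k}-\lambda_i g_{jk}-\lambda_j g_{ik})=0$. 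These are precisely the three equations of~\eqref{eq:mg-a}, so $A$ is parallel.

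For the converse I would run the same computation backwards, now reading the $\tk=0$ equations as first-order ODEs in $r$ that pin down the $r$-dependence of a parallel $A$. Explicitly, $\wt\nabla_0A_{00}=0$ reads $\partial_rA_{00}=0$, forcing $A_{00}=\mu(x)$; $\wt\nabla_0A_{0j}=0$ reads $\partial_rA_{0j}=\tfrac1rA_{0j}$, forcing $A_{0j}=-r\lambda_j(x)$; and $\wt\nabla_0A_{ij}=0$ reads $\partial_rA_{ij}=\tfrac2rA_{ij}$, forcing $A_{ij}=r^2a_{ij}(x)$, where the integration ``constants'' $\mu,\lambda_j,a_{ij}$ are functions of $x$ alone. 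This already puts $A$ into the form~\eqref{eq:A}, and symmetry of $A$ gives symmetry of $a_{ij}$. The remaining spatial equations $\wt\nabla_k A_{\ti\tj}=0$ then yield~\eqref{eq:mg-a} by exactly the computation of the direct part.

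There is no genuine conceptual obstacle here; the only thing to watch is the bookkeeping of the powers of $r$ and of the two kinds of Christoffel symbols, the ones with a time index $0$ carrying the factors $\pm r^{\pm 1}$ and the spatial ones $\Gamma^i_{jk}$ assembling the $M$-covariant derivatives. The single point that must be gotten right is the recognition that in each spatial case the partial-derivative terms and the $\Gamma^i_{jk}$-terms combine into $\mu_{,k}$, $\lambda_{j,k}$, and $a_{ij,k}$ respectively; once this is observed, the reduction to~\eqref{eq:mg-a} is immediate.
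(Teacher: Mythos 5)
Your proposal is correct and follows exactly the route the paper indicates: the paper's ``proof'' is the one-line remark that one should write down the parallelity condition for a symmetric $(0,2)$-tensor field on the cone in the coordinates of Lemma~\ref{thm:coneconnection} and compare with~\eqref{eq:mg-a}, which is precisely the computation you carry out. Your case-by-case bookkeeping (the $\partial_r$-equations being algebraic identities in the direct part and $r$-ODEs fixing the powers $r^0, r^1, r^2$ in the converse, and the spatial equations reassembling into the three equations of~\eqref{eq:mg-a}) is accurate.
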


\begin{proof}  This is an easy exercise (a straightforward way  to do this exercise is to write down   the condition that a symmetric parallel $(0,2)$-tensor field  on the cone is parallel, and compare it with \eqref{eq:mg-a}).
\end{proof}

\section{Proof of  Theorem~\ref{cov}.}
\label{sec:parallel_on_the_cone}
\subsection{Plan of the proof.}

We consider the cone $(\wt M, \tg) $  of dimension $n+1\ge 4$ over connected  simply connected  $(M, g)$. 
For every  $(0,2)$-tensor  field $A_{ij}$ on $\wt M$  we  
consider  the  $(1,1)$-tensor field  $L= L^i_j$ given 
by  \begin{equation} \label{defL}  A(., .)= \tg(L., .), \textrm{ i.e., in coordinates\ } L^i_j=\tg^{i k }A_{k j}. \end{equation} We will view $L$ as a field of endomorphisms of $T\wt M$.  If  $A$ is parallel and  symmetric, $L$ is 
  parallel and selfadjoint, and vise versa. 

Take $p\in \wt M$ and  consider the maximal orhtogonal   decomposition   of the  tangent space $T_p \wt M$ into
the direct sum of nondegenerate subspaces invariant w.r.t.  the action
of the holonomy group: \begin{equation} \label{-3} T_p \wt M=V_0\oplus V_1\oplus \dots \oplus V_\ell.\end{equation}
We assume that  $V_0$  is flat, in the sense that the holonomy group acts
trivially on $V_0$, and    that the decomposition is \emph{maximal}, i.e., 
 that each subspace $V_a,a\geq 1$, has no invariant
$\tg$-nondegenerate subspaces, and, therefore, cannot be decomposed
further.

We denote by  $\overset{(\alpha)}P = \overset{(\alpha)}P{}^j_i$  the orthogonal projector onto $V_\alpha,
\alpha=0,\dots , \ell $.  $\overset{(\alpha)}P$   is   selfadjoint and is 
preserved by the action of the holonomy group.   It corresponds to $g_a$ from Theorem \ref{cov} via \eqref{defL}. 
Clearly, $(\overset{(0)}P+\dots+\overset{(\ell)}P)=\Id$. 
We consider the following 
decomposition of  $L$:
\begin{gather}
  \label{eq:Ldecomposition}
  L=(\overset{(0)}P+\dots+\overset{(\ell)}P)\,  L\, 
  (\overset{(0)}P+\dots+\overset{(\ell)}P)=\sum_{a,b=0}^\ell  \overset{(a)}P L
  \overset{(b)}P.
\end{gather}

Each component $\overset{(a)}P L \overset{(b)}P$ is  an endomorphism invariant w.r.t. the holonomy group. Moreover, if $a=b$, then  it is self-adjoint.

The proof of the Theorem~\ref{cov}
contains two parts: first, in Lemma~\ref{thm:nondiag} we show that
each ``non-diagonal''  component $\overset{(a)}P L \overset{(b)}P$,  $a\neq b$, 
 is given by the quadratic combination of   vectors and  1-forms  invariant with respect to the holonomy group.  This part will be purely algebraic.     Then, in
Section~\ref{sec:nilpotent} 
we  describe  ``diagonal blocks'' $\overset{(a)}P L \overset{(a)}P$ and show that they are combinations of $\overset{(a)}P$ and  quadratic combination of vectors and  1-forms  invariant with respect to the holonomy group.   These two parts imply Theorem~\ref{cov}, we explain it  in  Section \ref{proof:thmcov}.

\subsection{Proof for ``non-diagonal''  components.  }\label{sec:Ldecomposition}

\begin{lem}
\label{thm:nondiag} In the notation above, 
let $L'=\overset{(a)}P L \overset{(b)}P$, $a\ne b$, be a  
``non-diagonal''  component of $L$ 
(invariant w.r.t. to the holonomy group). Then,
  $L'=\sum_{i,j} c_{ij} \tau_i^*\otimes \tau_j,$ where
  $\tau_s\in  T_p\wt M$ are certain  vectors invariant w.r.t. to the holonomy group,  $\tau^*_s \in  T^*_p\wt M$ are certain   $1$-forms invariant w.r.t. to the holonomy group, and $c_{ij} \in \R$ are constants.
\end{lem}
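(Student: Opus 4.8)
The plan is to treat $L'=\Proj{a}L\Proj{b}$ purely algebraically, as a holonomy-equivariant linear map from $V_b$ to $V_a$ (extended by zero on the remaining $V_c$); since $L$ is parallel on the simply-connected $\wt M$, being holonomy-invariant at $p$ is equivalent to being parallel, so it suffices to work in the single tangent space $T_p\wt M$ with the group $G:=\Hol{\tg}$. The only structural input I need is the de Rham--Wu splitting of the holonomy representation: corresponding to the orthogonal decomposition~\eqref{-3} one has $G=G_0\times G_1\times\cdots\times G_\ell$, where $G_\alpha$ acts on $V_\alpha$ as its weakly-irreducible holonomy and acts trivially on every $V_\beta$ with $\beta\neq\alpha$ (and $G_0$ is trivial since $V_0$ is flat). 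I emphasize that neither the cone hypothesis nor the restriction on the signature enters at this stage, in agreement with the purely algebraic nature of this step announced in the plan of the proof.

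With this in hand I would run two equivariance arguments. First, fix $g\in G_a$; since $g$ acts trivially on $V_b$, equivariance of $L'$ gives $g\,L'(x)=L'(g\,x)=L'(x)$ for every $x\in V_b$, while every element of the remaining factors fixes $V_a$ pointwise; hence $\Im L'$ is contained in the space $V_a^{G}$ of holonomy-invariant vectors of $V_a$. Second, fix $g\in G_b$; since $g$ acts trivially on $V_a$, equivariance gives $L'(g\,x)=g\,L'(x)=L'(x)$, so $L'$ annihilates every difference $g\,x-x$ and therefore factors through the coinvariants $(V_b)_{G_b}$, whose dual is precisely the space of holonomy-invariant $1$-forms supported on $V_b$.

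To assemble the statement I would choose a basis $w_1,\dots,w_m$ of $\Im L'\subseteq V_a^{G}$ and write $L'(x)=\sum_j f_j(x)\,w_j$ with uniquely determined linear functionals $f_j$ on $V_b$. Re-reading the second argument shows each $f_j$ is $G_b$-invariant, hence (again because the other factors act trivially on $V_b$) a holonomy-invariant $1$-form; extending the $w_j$ and the $f_j$ by zero to all of $T_p\wt M$, and expressing the $w_j$ in a fixed basis of invariant vectors $\{\tau_j\}$ and the $f_j$ in a fixed basis of invariant $1$-forms $\{\tau_i^*\}$, yields $L'=\sum_{i,j}c_{ij}\,\tau_i^*\otimes\tau_j$ with constant $c_{ij}$, as claimed. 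The one genuine obstacle is the first paragraph: the whole argument reduces to being able to realise an \emph{arbitrary} transformation of $V_a$ coming from the holonomy by a holonomy element that is simultaneously \emph{trivial} on $V_b$, which is exactly the content of the de Rham--Wu product decomposition; once this is granted, the remainder is a routine Schur-type bookkeeping.
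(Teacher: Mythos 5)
Your proposal is correct and follows essentially the same route as the paper: both rest on the product decomposition $\Hol{\tg}=H_0\times\cdots\times H_\ell$ (each factor acting trivially on the other blocks) and then run the two equivariance arguments --- one showing $\Im L'$ consists of holonomy-invariant vectors, the other showing the coefficient functionals are invariant $1$-forms. The only cosmetic difference is that the paper packages your coinvariants/factoring step as invariance of the dual endomorphism $L'^*=\Proj{b}{}^*L^*\Proj{a}{}^*$, which is the same computation.
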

\begin{proof}
  Let $\bar u_i,\dots,  \bar u_k$ be a  basis of $\Im L'\subset V_a$
  and $\bar v_1, \dots \bar v_r$ be a  basis in $V_b$  such that  $L' \bar
  v_s =\bar u_s$ for $s=1,...,k$ and 
   and $\bar v_{k+1},\dots \bar v_r\in \ker L'$. Then, 
  \begin{equation} \label{-4} L'=\sum_{i\leq k} \bar u_i \otimes \bar v^*_i,\end{equation} where $v^*_i$ are the 1-forms dual to $v_i$.
It is known that   the holonomy group $H_p$ is the direct product of the subgroups
  $H_0\times  \dots  \times  H_\ell$ such that  each $H_\alpha$ acts trivially
  on all $V_\beta$ such that  $\beta\neq\alpha$.
  Since $L'(\wt M)\subset V(a)$, for each $h=h_0\cdot...\cdot h_l\in
  H$, $h_\alpha \in H_\alpha$,  and for any  $v\in T_p\wt M$ we have
  $$h\,\underbrace{L'(v)}^{\in V_a}=h_a \overset{(a)}P L \overset{(b)}P
  (v)=\overset{(a)}P L h_a(\underbrace{\overset{(b)}P (v)}^{\in V_b}) =
  \overset{(a)}P L \overset{(b)}P (v)=L'( v)$$ Thus, all $\bar u_i
  \in \Im L'$ are invariant with respect to the action of $H$.

  Similarly, consider the dual endomorphism $L'^*=\overset{(b)}{P^*} L^*
  \overset{(a)}{P^*}: V_a^* \to V_b^*$ and the action of the holonomy
  group $H$ on the dual decomposition:
  $$h\,\underbrace{L'^*(u^*)}^{\in V_b^*}=h_b \overset{(b)}{P^*} L^* \overset{(a)}{P^*}
  (u^*)=\overset{(b)}{P^*} L^* h_b(\underbrace{\overset{(a)}{P^*}
    (u^*)}^{\in V^*_a}) = \overset{(b)}{P^*} L^* \overset{(a)}{P^*}
  (u^*)=L'^*(u^*)$$
  Thus, $v_s^*=L'^*(u^*_s),s\leq k$ is invariant with respect to action of the
  holonomy group. Thus,  all $v_i$ and $v^*_i$ from \eqref{-4} are invariant w.r.t. holonomy group.
\end{proof}

\subsection{Parallel symmetric tensor fields on  indecomposable
  pseudo-Riemannian manifolds.}
\label{sec:nilpotent}

In this section we deal
with the ``diagonal'' components $\overset{(a)}P L \overset{(a)}P$
of an arbitrary parallel self-adjoint tensor $L$.
We take $a\ge 1$ and   denote by  $M_a$  the $k_a$-dimensional integral submanifold 
corresponding to the subspace $V_a$ and by $g_a$ the restriction of the metric to it. Clearly,   
\begin{enumerate}
\item  $M_a$ is indecomposable;
\item by Lemma~\ref{thm:split}, it admits a  function satisfying   \eqref{eq:hom}
\item if  the signature of the initial metric
  $g$ is  riemannian or lorentzian,  then, by
  Lemma~\ref{thm:two_cases}, the cone metric $\tg$ has
  signature $(1,n)$, $(n-1,2)$, or the riemannian signature $(0,n+1)$. 
  Thus, the restriction $g_a$ of the
  cone metric to each component $M_a$ is either Riemannian, Lorentzian
  or has signature $(k_a-2,2)$;
\item The restriction of $\overset{(a)}P L \overset{(a)}P$  to $M_a$ 
is a well-defined
 parallel  selfadjoint $(1,1)$-tensor field on $M_a$.
\end{enumerate}

For readability we  ``forget'' the index   $a$ and    denote the manifold  $M_a$, the
metric $g_a$ on it and the restriction of  $\overset{(a)}P L \overset{(a)}P$ to it by 
 $\wt M$, $g$ and $L$ and assume that $k_a=\dim M_a=n+1$; they enjoy the properties (1--4) above. 
 
The goal of the next two sections  will be to prove that 
 $L$   and the curvature tensor $R$  fulfill

 \begin{equation} \label{kernel} L^i_{p}R^p_{\ jk\ell}=0. \end{equation} 
In order to prove this result we will use  the following property of parallel $(1,1)$-tensor fields: 
    \begin{equation} \label{ricci} L^i_{p}R^p_{\ jk\ell}=R^i_{\ p k\ell}L^p_j.\end{equation}

    In order to prove \eqref{ricci},  we use that  for  the vector  fields
    $X=\partial_k,Y=\partial_\ell,Z=\partial_j$, in view of  $[X,Y]=0$, we have 
    $$R(X,Y)(Z)=\nabla_X \nabla_Y Z - \nabla_Y \nabla_X Z$$
    Applying the (1,1)-tensor $L$ viewed as an endomorphism  an using that it is parallel   we obtain
    $$
      L(R(X,Y)(Z))=L(\nabla_X \nabla_Y Z - \nabla_Y \nabla_X Z)=\nabla_X \nabla_Y L(Z) - \nabla_Y \nabla_X L(Z)=R(X,Y)(L(Z))
     $$ which is equivalent to \eqref{ricci}.

Besides, we will use  that for a solution $v$ of \eqref{eq:hom}  and for the corresponding vector field 
 $\vec v := v_{,}^{\  i}$    we have $L^k  \vec v  \ne 0$ almost everywhere provided $L^k:= \underbrace{L\circ ...\circ L}_{\textrm{$k$ times}} $  is not identically zero. 
Indeed, the existence of a solution of \eqref{eq:hom} implies that $g$ or $-g$ 
 is a cone metric (in a neighbohood of almost every point). Then,   by \eqref{eq:A}, $L^i_j  \vec v^j= \pm \lambda^i$. Now, by Theorem \ref{thm:mg+Ba}, the solutions $(a, \lambda, \mu)$ satisfy \eqref{eq:mg+Ba} and  by assumptions we have $B= \pm 1$. Therefore, if  $\lambda^i $ is  zero at every point of  an open subset,  $L$ is proportional to $\delta^i_j$ 
 in this subset implying it is $\delta^i_j$ everywhere.

   \subsubsection{ Possible  Jordan forms of $L$.} \label{possible_jordan_forms}

We   first recall the following  theorem from linear algebra: 
\begin{thm}[\cite{LancRodm}, Theorem 12.2]
  \label{thm:pairofmatrices}
  Let $g$ be a symmetric bilinear nondegenerate form on a $n$-dimensional real linear vector space $V$, and 
  let   $L$ be a   $g$-self-adjoint endomorphism of $V$.  Then there exists
  a basis in $V$ such that in this basis the matrices of $g$ and $L$ have the  blockdiagonal form \begin{gather}
  L=\left(
    \begin{array}{ccc|ccc}
      J_{l_1}&&&&\\
      &\ddots&&&\\
      &&J_{l_p}&&\\
      \hline
      &&&J_{2 m_1}&\\
      &&&&\ddots\\
      &&&&&J_{2m_q}
    \end{array}
  \right),\\ g=\left(
    \begin{array}{ccc|ccc}
      \varepsilon_1 F_{l_1}&&&&\\
      &\ddots&&&\\
      &&\varepsilon_p F_{l_p}&&\\
      \hline
      &&&F_{2 m_1}&\\
      &&&&\ddots\\
      &&&&&F_{2m_q}
    \end{array}
  \right),
\end{gather}
where $J_{k_i}$, $i=1,\dots,p$,  are  the $k_i$-dimensional elementary Jordan blocks with  real eigenvalues, 
$J_{2 m_i}$, $i=1,\dots,q$,   are 
is the $2m_i$-dimensional elementary (real) Jordan block with  complex  eigenvalues, $F_k$ are  the 
  $k\times k$-dimensional symmetric matrices   of the form 
  \begin{gather}
    \label{eq:elementary_block}
    F_m=\left(
      \begin{array}{ccccc}
        &&&&1\\
        &&&1&\\
        &&\iddots&&\\
        &1&&&\\
        1&&&&
      \end{array}
      \right), 
  \end{gather} and $\varepsilon_i \in \{1,-1\}$. 
\end{thm}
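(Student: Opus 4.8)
The plan is to prove this canonical-form statement by a root-space decomposition followed by a careful treatment of a single generalized eigenspace, the core being the nilpotent self-adjoint case. The starting observation is that for a $g$-self-adjoint $L$ any polynomial $p(L)$ is again $g$-self-adjoint. First I would use this to show that the generalized eigenspaces (root subspaces) for distinct real eigenvalues, and the real root subspaces attached to distinct conjugate pairs of complex eigenvalues, are mutually $g$-orthogonal: if $(L-\lambda)^k x=0$ and $y$ lies in the root subspace of $\mu\ne\lambda$, then $g(x,(L-\lambda)^k y)=g((L-\lambda)^k x,y)=0$, and since $(L-\lambda)^k$ is invertible on that root subspace, $g(x,z)=0$ for all $z$ there. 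Consequently $V$ is a $g$-orthogonal direct sum of $L$-invariant root subspaces, and by nondegeneracy of $g$ together with orthogonality the restriction of $g$ to each summand is again nondegenerate. This reduces everything to a single root subspace.

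On a root subspace for a real eigenvalue $\lambda$ I would write $L=\lambda\cdot\Id+N$ with $N$ nilpotent, $g$-self-adjoint, and $g$ nondegenerate, and prove the central lemma: the pair $(N,g)$ splits $g$-orthogonally into cyclic blocks on each of which $N$ is a single Jordan block and $g$ equals $\varepsilon F$. To construct one block, let $s$ be the nilpotency index ($N^{s}=0\neq N^{s-1}$) and consider the symmetric form $\phi(x,y):=g(x,N^{s-1}y)$, which is symmetric because $N^{s-1}$ is self-adjoint. Since $g$ is nondegenerate and $N^{s-1}\neq 0$, the form $\phi$ is not identically zero, and over $\mathbb{R}$ (characteristic $\neq 2$) polarization then yields a vector $u$ with $\phi(u,u)=g(u,N^{s-1}u)\neq 0$; such $u$ automatically has maximal height $s$. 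The cyclic subspace $C(u)=\operatorname{span}(u,Nu,\dots,N^{s-1}u)$ has Gram entries $g(N^i u,N^j u)=g(u,N^{i+j}u)$, depending only on $i+j$ and vanishing once $i+j\geq s$, so the Gram matrix is a Hankel matrix with nonzero top anti-diagonal $c=g(u,N^{s-1}u)$. A standard triangular change of the cyclic generator $u\mapsto u+\sum_{t\ge 1}a_t N^t u$ kills the entries with $i+j<s-1$, leaving exactly $c\,F_s$, and rescaling normalizes $c$ to $\varepsilon=\pm 1$.

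To finish the real-eigenvalue part I would induct: because $N$ is self-adjoint and $NC(u)\subseteq C(u)$, the orthogonal complement $C(u)^{\perp}$ is $N$-invariant, and since $C(u)$ is $g$-nondegenerate it is a genuine complement on which $g$ is again nondegenerate, so the dimension drops. For a conjugate pair of complex eigenvalues I would complexify $V$, extend $g$ to a $\mathbb{C}$-bilinear symmetric form, and apply the same orthogonality: the root spaces $V_\lambda$ and $V_{\bar\lambda}$ are each $g$-isotropic (as $\lambda\neq\bar\lambda$) and paired nondegenerately with one another, which forces a hyperbolic structure carrying no sign invariant; realifying the cyclic blocks produces the blocks $J_{2m}$ paired with $F_{2m}$ and explains why these never carry an $\varepsilon$. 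I expect the main obstacle to be precisely the nilpotent step: rigorously extracting a single Jordan block of the correct size with Gram matrix exactly $\varepsilon F_s$ (the anisotropic-vector argument for $\phi$ and the elimination adjusting the cyclic generator), and, in the complex case, verifying that conjugation symmetry removes the sign ambiguity.
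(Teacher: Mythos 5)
First, a point of comparison: the paper does not prove this statement at all --- it is imported verbatim from Lancaster--Rodman \cite{LancRodm}, so there is no proof of the paper's to measure yours against; I can only assess your argument on its own merits. Your treatment of the real eigenvalues is correct and is essentially the standard proof of this canonical form: the primary decomposition is $g$-orthogonal because $p(L)$ is $g$-self-adjoint, hence each root subspace is $g$-nondegenerate; then, with $L=\lambda\,\Id+N$, the anisotropic vector $u$ for the symmetric form $\phi(x,y)=g(x,N^{s-1}y)$ exists by polarization, the Gram matrix of the cyclic space $C(u)$ is Hankel with nonzero top anti-diagonal $c=g(u,N^{s-1}u)$, and the triangular change of generator does work as you claim (the entry $h'_m=g(u',N^mu')$ contains $a_{s-1-m}$ with coefficient $2h_{s-1}\neq 0$ and only earlier $a_t$'s otherwise, so one solves recursively for $m=s-2,\dots,0$, leaving exactly $cF_s$, and rescaling gives $\varepsilon F_s$); finally $C(u)^{\perp}$ is $N$-invariant and nondegenerate, so induction on dimension finishes this case.

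The complex case, however, contains a genuine error. You extend $g$ to a $\mathbb{C}$-\emph{bilinear} symmetric form and then assert that ``the same orthogonality'' makes $V_\lambda$ and $V_{\bar\lambda}$ each isotropic and nondegenerately paired with one another. The same orthogonality gives exactly the opposite: your polynomial argument shows $g(V_\lambda,V_\mu)=0$ for \emph{every} pair of distinct eigenvalues, and since $\lambda\neq\bar\lambda$ this includes $\mu=\bar\lambda$; consequently $V_\lambda\perp V_{\bar\lambda}$ and each $V_\lambda$ carries a \emph{nondegenerate} restriction of the bilinear form --- it is not isotropic. The isotropic/hyperbolic picture you describe is the one produced by the \emph{sesquilinear} (Hermitian) extension $h(x,y)=g(x,\bar y)$, whose root spaces pair $V_\lambda$ with $V_{\bar\lambda}$; mixing the two extensions makes your premise false, so the conclusion that a ``hyperbolic structure'' kills the sign does not follow as written. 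The theorem is of course still true, and your bilinear setup can be repaired along your own lines: run your real cyclic-block argument verbatim on the nondegenerate complex space $(V_\lambda, g|_{V_\lambda})$ (polarization is valid over $\mathbb{C}$), normalize the constant $c$ to $1$ by a \emph{complex} square root --- this, not hyperbolicity, is where the sign invariant disappears --- and then realify: if $u,Nu,\dots,N^{m-1}u$ is a normalized cyclic basis of a block in $V_\lambda$, its conjugate is one in $V_{\bar\lambda}$, and real and imaginary parts span a real $2m$-dimensional $L$-invariant subspace on which $L$ becomes $J_{2m}$ and $g$ is congruent to $F_{2m}$; one must also check that the orthogonal complement taken inside the real root subspace is conjugation-stable so the induction proceeds. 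These realification verifications are exactly what your sketch omits, and they cannot be waved through while the isotropy claim stands in its current, incorrect form.
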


It is easy to see that  
the $k\times k$-dimensional matrices  $\varepsilon F_k$ (viewed as bilinear forms on $\R^k$) have 
the signature $(\lfloor\frac{k}{2}\rfloor,\lfloor\frac{k+1}{2}\rfloor)$ or
$(\lfloor\frac{k+1}{2}\rfloor,\lfloor\frac{k}{2}\rfloor)$ depending on the sigh of $\varepsilon$, and that each  block $F_{2m_i}$  has signature $(m_i,m_i)$

We will apply this theorem to our metric $g$ which has   signature   is $(1,n)$, $(n-1, 2)$ or $(0,n+1)$. Moreover, since our  $L$ is invariant w.r.t. the holonomy group, it can not have two different real eigenvalues, or two different pairs of complex-conjugate eigenvalues, or  simultaneously a real eigenavalue  and a complex eigenvalues. We therefore have: 

\begin{cor} Under our assumptions,   
if $L$ has a real eigenvalue, then it is its only eigenvalue and the Jordan form of $L$ has at most two Jordan blocks of dimension $\ge 2$.  If $L$ has a complex nonreal eigenvalue, then  
the dimension of $\wt M$ is 4.  
\end{cor}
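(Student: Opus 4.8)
The plan is to combine the algebraic normal form of Theorem~\ref{thm:pairofmatrices} with the two structural constraints on $L$: that $L$ is self-adjoint and parallel (equivalently, invariant under the holonomy group and hence, by indecomposability, commuting with an irreducibly acting group), and that $g$ has one of the signatures $(0,n+1)$, $(1,n)$, $(n-1,2)$, all of which have small index at most $2$, i.e.\ $\min(p_+,p_-)\le 2$ for the signature $(p_+,p_-)$. The whole statement then reduces to counting indices of the blocks $\varepsilon_iF_{l_i}$ and $F_{2m_i}$ against these three signatures.

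First I would justify the eigenvalue trichotomy asserted just before the corollary. Working over $\mathbb{C}$, decompose $T_p\wt M\otimes\mathbb{C}$ into root spaces $V_\mu=\ker(L-\mu\,\Id)^{n+1}$. Since every polynomial in the self-adjoint $L$ is again self-adjoint, and since $(t-\mu)^{n+1}$ and $(t-\nu)^{n+1}$ are coprime for $\mu\ne\nu$, a standard Bézout manipulation gives $g_\mathbb{C}(V_\mu,V_\nu)=0$ whenever $\mu\ne\nu$; hence each $g_\mathbb{C}|_{V_\mu}$ is nondegenerate. For a real eigenvalue $\mu$ the real root space $\ker(L-\mu\,\Id)^{n+1}\subset T_p\wt M$ is then nondegenerate, and it is parallel because $L$ is parallel; by indecomposability (property~(1)) it is $0$ or all of $T_p\wt M$. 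For a nonreal $\mu$ the real subspace whose complexification is $V_\mu\oplus V_{\bar\mu}$ is the kernel of the real polynomial $\bigl((L-\mu\,\Id)(L-\bar\mu\,\Id)\bigr)^{n+1}$, again nondegenerate and parallel, hence $0$ or everything. This yields "single real eigenvalue or single conjugate pair," and in particular the clause "it is its only eigenvalue" in the real case.

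Next, in the real case I would invoke Theorem~\ref{thm:pairofmatrices}: with a single real eigenvalue the normal form consists only of real Jordan blocks $J_{l_1},\dots,J_{l_p}$ with metric blocks $\varepsilon_iF_{l_i}$. By the signature computation recorded just after Theorem~\ref{thm:pairofmatrices}, for $l_i\ge 2$ the block $\varepsilon_iF_{l_i}$ has positive index $\ge 1$ and negative index $\ge 1$, whereas a block of size $1$ contributes to only one of the two indices. Hence if there are $m$ blocks of dimension $\ge 2$, then both $p_+\ge m$ and $p_-\ge m$, so $m\le\min(p_+,p_-)\le 2$, which is exactly the claimed bound.

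Finally, in the complex case the whole space is the root space of a single conjugate pair, so by Theorem~\ref{thm:pairofmatrices} the normal form consists only of complex blocks $J_{2m_1},\dots,J_{2m_q}$; since each $F_{2m_i}$ has signature $(m_i,m_i)$, the total signature is neutral, $(\sum_i m_i,\sum_i m_i)$. Comparing with the admissible signatures, $(0,n+1)$ and $(1,n)$ are never neutral for $n\ge 3$, while $(n-1,2)$ is neutral exactly when $n-1=2$, i.e.\ $n=3$ and $\dim\wt M=n+1=4$. The only delicate step — and the natural place for a subtle slip — is the nondegeneracy and parallelism of the root spaces in the second paragraph, where one must treat the conjugate pair as a single \emph{real} parallel nondegenerate block before applying indecomposability; everything afterward is a bookkeeping of indices against the three allowed signatures.
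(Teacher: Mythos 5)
Your proposal is correct and takes essentially the same route as the paper: the paper likewise deduces from holonomy-invariance and indecomposability that $L$ has a single real eigenvalue or a single conjugate pair, and then reads off the bound on Jordan blocks (respectively the forced neutral signature and hence $\dim\wt M=4$) from the signatures of the blocks $\varepsilon_i F_{l_i}$ and $F_{2m_i}$ in Theorem~\ref{thm:pairofmatrices} against the admissible signatures $(0,n+1)$, $(1,n)$, $(n-1,2)$. Your root-space orthogonality argument simply fills in a step the paper states without proof.
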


Note that a (1,1)-tensor $L$ is parallel and selfadjoint if and only if (for any constants $c_1\ne 0$, $c_2$) the  (1,1)-tensor $c_1 L + c_2 \textrm{Id}$ is parallel and selfadjoint.  Thus, if $L$ has a real eigenvalue, then without loss of generality we can assume that $L$ is nilpotent. If $L$ has a complex eigenvalue, then without loss of generality in a certain  basic in $T_p\wt M$ the tensor $L$ and the metric $g$ are given by \eqref{-8}, \eqref{-9}.

\subsubsection{Proof of \eqref{kernel} for  self-adjoint nilpotent endomorphisms    with at most two  Jordan
  block of dimension $\ge 2$ on  cone manifolds.}
\label{sec:two_blocks}

 The proof is a purely linear algebraic: we derive \eqref{kernel} from \eqref{ricci}, from the assumption that $L$ is nilpotent with   at most two Jordan blocks of dimension $\ge 2$, and from   the existence of a vector $\vec v$ such that 
  $L^r\vec v\ne 0$ for all $r$ such that $L^r \ne 0$. All these conditions are  fulfilled 
   at every point of a certain everywhere dense open subset of $M$; clearly, if \eqref{kernel} is fulfilled 
   at every point of a certain everywhere dense open subset of $M$, it is fulfilled everywhere.  
  
  We consider  a generic  point $p\in M$ and 
 take a basis $e_1,...,e_n$ in $T_p\wt M$ 
such that in this basis $L$ has the block-diagonal form
 \begin{gather}
    \label{eq:block-diagonal-bis} 
    L=\left(
    \begin{array}{ccccc}
      J_{k} &       & &  &\\
            & J_{m} &  & &\\
      &&0&\dots &0  \\
      &&\vdots&\ddots & \vdots \\
      &&0& \dots& 0 \\
    \end{array}\right),
  \end{gather}
where $J_k$  and $J_m$ are $k\times k$  and $m\times m $ dimensional Jordan blocks. Then, 
$L(e_i)= e_{i-1}$ for $i=2,...,k, k+2,...,k+m$ and $L(e_i)= 0$ for $i=1,k+1,k+m+1,...,n+1$. 
We assume $k\ge m$ and allow   $m=1$.   Because of $L^{k-1} \ne 0$, we have $L^{k-1}  \vec v  \ne 0$ and therefore $\vec v$  has the maximal height;  without loss of generality we may  assume   $e_k= \vec v$.

  We take two arbitrary vectors $X,Y\in T_p  M$ and 
   the      $g$-skew-selfadjoint endomorphism  $\tilde R:= R(X,Y)= R^i_{\ j k \ell} X^k Y^\ell  $ on $T\wt M$. Then,  the condition \eqref{ricci} implies that   $L$ and $\tilde R$
  commute as linear endomorphisms.    Let us consider 
  the  bilinear form $g( L\tilde R\,\cdot,\cdot)$ 
   and show that it vanishes. 
   
   Since for any $u$   and $w$  and for any $r\in \mathbb{N}$ 
    we have 
    \begin{equation} \label{-6}  g(  L^r \tilde R u,w)= g( \tilde R u,L^r w)=-g( u,\tilde R
   L^r w)=- g(\tilde R L^r  w,u)=- g( L^r  \tilde R w,u),\end{equation} we see that  the bilinear form   $ g( L^r\tilde R\,\cdot,\cdot)$  is skew-symmetric; in particular $ g(  L^r \tilde R u,u)= 0$ for all $u$.

 We show 
 \begin{equation} \label{-7} g( \tilde R L e_i, e_j)=0  \textrm{ \ for all $i,j =1,...,n+1$}. \end{equation}

 For $i = k+m+1,...,n+1$ and arbitrary $j$ we have $L(e_i)=  0$   so \eqref{-7} trivially holds. 
 For $i= 1,...,k$ and arbitrary $j$  we have 
 $$
 g (\tilde R L e_i, e_j) = g(\tilde R L^{k-i+1} \vec v, e_j)= g(\tilde  L^{k-i+1} \tilde R \vec v,  e_j)= g(0,  e_j)=  0,$$ 
 so \eqref{-7}  holds as well. 
Since $g(\tilde   R  L\cdot , \cdot ) $ is  skew-symmetric, we  also have  \eqref{-7} for  $j=  1,...,k, k+m+1,...,n+1$ and  arbitrary $i$.  
Now, for  the remaining pairs of indexes $i, j = k+1,...,k+m$,   we have 
$$
 \tg (\tilde R L e_i, e_j) = \tg(\tilde R L^{k+m-i+1} e_{k+m},  L^{k+m-j} e_{k+m})= \tg(\tilde R  L^{2k+2m -i-j+1}    e_{k+m},  e_{k+m})= 0.$$ 
 Thus, $g( \tilde R L \cdot , \cdot )\equiv 0$ implying $\tilde R L=0$ as we claimed.

\subsubsection{Two interesting (counter)examples.} \label{counter}

The next two examples show that the assumptions in Section~\ref{sec:two_blocks}
that $\hat g$ is a cone metric and that  $L$ has at most two  nontrivial Jordan
blocks  are important.

The first example is based on the description of nilpotent parallel   symmetric $(0,2)$-tensor fields due to  Solodovnikov~\cite{SK} and
Boubel~\cite{Boubel}.  In order to produce  the second example,  we applied  the construction from  
\cite[Theorem 3.3]{Mounoud}  to $g$ and $L$  from the first example. 

\begin{ex} \label{ex1} 
  In coordinates $(x_1,x_2,x_3,x_4)$ on $U\subset \R^4$ we consider a metric $g$ and a $(1,1)$-tensor field $L$ given by 
  $$
  g=\left(
    \begin{array}{cccc}
      0&0&x_3 x_4&0\\
      0&0&0&x_3 x_4\\
      x_3 x_4&0&x_1 x_4+x_2 x_3&0\\
      0&x_3 x_4&0&x_1 x_4+x_2 x_3
    \end{array}
  \right),
  L=\left(
    \begin{array}{cccc}
      0&0&1&0\\
      0&0&0&1\\
      0&0&0&0\\
      0&0&0&0
    \end{array}
  \right)$$

By direct computation it is easy to  show that $L$ is parallel  and self-adjoint, 
  while $L^i_p R^p_{jkm}\ne 0$  (because,   for example, $L^1_p R^p_{434}\neq 0$).
\end{ex}

\begin{ex} \label{ex2} 
  We denote by    $(r, s,x_1,x_2,x_3,x_4)$ the coordinates
  on $U\subset \R^6$ consider the following
  function $$F(r,s,x_1,x_2,x_3,x_4)=r^2 e^{2s}(x_1 x_4+x_2 x_3)+r^2
  x_3 x_4. $$ Then, we put
  \begin{gather}
    \tg=
    \left(
    \begin{array}{cc|cccc}
      1&0&0&0&0&0\\
      0&-r^2&0&0&0&0\\
      \hline
      0&0&0&0&r^2 e^{2s} x_3 x_4&0\\
      0&0&0&0&0&r^2 e^{2s} x_3 x_4\\
      0&0&r^2 e^{2s} x_3 x_4&0&F&0\\
      0&0&0&r^2 e^{2s}x_3 x_4&0&F
    \end{array}
  \right),\\
  \widehat L=e^{2s}\left(
    \begin{array}{cc|cccc}
      1&r&0&0&0&0\\
      -\frac{1}{r}&- 1&0&0&0&0\\
      \hline
      0&0&0&0&1&0\\
      0&0&0&0&0&1\\
      0&0&0&0&0&0\\
      0&0&0&0&0&0
    \end{array}
  \right)
\end{gather}
Evidently, $\tg$ is a cone metric. By direct computation one can prove 
 that $\widehat L$ is a parallel self-adjoint tensor field with respect to $\tg$, which is
nilpotent and has three $2$-dimensional Jordan blocks, and that    $L^i_p R^p_{jkm}\ne 0$.
\end{ex}

The last  example shows also that 
 the assumption on the signature  of $\hat g$   in
Theorem~\ref{cov} is important.

\subsubsection{ Indecomposable  blocks can not have complex eigenvalues of $L$.}
\label{sec:complex}
Let us now consider the case when  (parallel, selfadjoint)  $L$  on the 
indecomposable cone manifold $\wt M$  has  
two  complex conjugate eigenvalues.    Then, as we explained in Section \ref{possible_jordan_forms},  we may think that in a certain basis the matrices of $g$ and  $L$ are as below 
  \begin{gather}
    L=\left(
      \begin{array}{cc|cc}
        0 & 1 & 0&0 \\
        -1 & 0 & 0&0\\
        \hline
        0&0  &0 &1\\
        0&0  &-1 &0\\
      \end{array}
    \right), \mbox{ \   } g= \left(
      \begin{array}{cc|cc}
        0 & 1 & 0&0 \\
        1 & 0 & 0&0\\
        \hline
        0&0  &0 &1\\
        0&0  &1 &0\\
      \end{array}
      \right) \label{-8}\\
    L=\left(
      \begin{array}{cc|cc}
        0 & 1 & 1&0 \\
        -1 & 0 & 0&1\\
        \hline
        0&0  &0 &1\\
        0&0  &-1 &0\\
      \end{array}
    \right),\mbox{ \  }g= \left(
      \begin{array}{cc|cc}
        0 & 0 & 0&1 \\
        0 & 0 & 1&0\\
        \hline
        0&1  &0 &0\\
        1&0  &0 &0\\
      \end{array} \label{-9}
      \right)
  \end{gather}

We again consider the      $g$-skew-selfadjoint endomorphism  $\tilde R:= R(X,Y)= R^i_{\ j k \ell} X^k Y^\ell  $, where 
 $X,Y\in T_p  \wt M$ are arbitrary vectors. 
  Then,  the condition \eqref{ricci} implies that   $L$ and $\tilde R$
  commute as linear endomorphsims. 
   The matrix $\tilde R$ satisfies therefore  the relations 
\begin{equation} \label{-10}
  \tilde RL - \tilde LR=0 \textrm{ and }  g \tilde R + \tilde R^t g= 0. 
  \end{equation} 
  
  Suppose now  $L,g$ are  as in \eqref{-8}.   Then, \eqref{-10}  is a system of linear equations on the components of $\tilde R$.   Solving it (which is an easy exercise in linear algebra) 
   we obtain  that in this basis $\tilde R$ has the form
  $$
  \left[ \begin {array}{cccc} 0&0&-\tilde R_{{4,2}}&\tilde R_{{4,1}}
\\ \noalign{\medskip}0&0&-\tilde R_{{4,1}}&-\tilde R_{{4,2}}\\ \noalign{\medskip}
\tilde R_{{4,2}}&-\tilde R_{{4,1}}&0&0\\ \noalign{\medskip} {\tilde R_{4,1}}&{\tilde R_{4,2}}&0&0
\end {array} \right].$$ We see that $\tilde R$ is nondegenerate unless $\tilde R_{{4,2}}= \tilde R_{{4,1}}=0$. 
But it is degenerate since $\tilde R\vec v =  0$. Then,   $\tilde R_{{4,2}}= \tilde R_{{4,1}}=0$. Thus, for any $X,Y$ we have  $\tilde R:= R(X,Y) =0$ implying the metric is flat. 
 
  Suppose now  $L,g$ are  as in \eqref{-9}.  In this case  the equations   
\eqref{-10} already imply that $\tilde R= 0$. 
 Thus, also in this case,  for any $X,Y$, we have  $\tilde R:= R(X,Y) =0$ implying the metric is flat.

\subsubsection{The existence of parallel vector fields provided \eqref{kernel}.}

\label{sec:kernel}

In   Sections  \ref{possible_jordan_forms}, \ref{sec:two_blocks},  \ref{sec:complex} 
 we have shown that under the  general assumptions of Section  \ref{sec:nilpotent} 
   each selfadjoint  parallel  tensor is the sum of $\const \cdot \Id$ and a (parallel, selfadjoint)  tensor $L$   
satisfying 
$L^i_s R^s_{\ jkl}=0.$ Then,  the goal of Section  \ref{sec:nilpotent}, i.e., 
 the ``diagonal'' part of Theorem \ref{cov} follows from

\begin{lem}
  \label{thm:kernelR}
  Let $L$ be a parallel   $(1,1)$-tensor on a connected simply-connected  $(M,g)$ satisfying 
    $L^i_s R^s_{\
    jkl}=0 $ (where $R$ is the curvature tensor of $g$). 
    Then,  there exists $r= \textrm{rank}(L)$ linearly independent parallel vector fields of $M$ such that at every point they lie in the image of $L$.  
\end{lem}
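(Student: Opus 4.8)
The plan is to show that the image of $L$ consists entirely of holonomy-invariant vectors, and then to invoke the holonomy principle. First I would record that, since $L$ is parallel, parallel transport $P_\gamma$ along any path commutes with $L$, i.e. $P_\gamma\circ L=L\circ P_\gamma$; hence $P_\gamma$ maps $\Im(L)$ at the initial point isomorphically onto $\Im(L)$ at the endpoint. In particular $\rk(L)$ is constant on the connected manifold $M$, so $\Im(L)$ is a parallel distribution of constant rank $r$, and at the fixed point $p$ the subspace $\Im(L)_p\subset T_pM$ is invariant under parallel transport around loops based at $p$, i.e. invariant under the holonomy group $\Hol{g}$.

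Next I would combine the hypothesis with~\eqref{ricci}. The assumption reads $L^i_sR^s_{\ jk\ell}=0$, which is exactly $L\circ R(X,Y)=0$ for all $X,Y$; on the other hand~\eqref{ricci} gives $L\circ R(X,Y)=R(X,Y)\circ L$. Therefore $R(X,Y)\circ L=0$, which means that every curvature operator annihilates $\Im(L)$: for each $q\in M$ and each $w\in\Im(L)_q$ one has $R_q(X,Y)w=0$ for all $X,Y\in T_qM$.

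The key step is to transfer this pointwise information to the holonomy algebra based at the single point $p$. By the Ambrose--Singer theorem the holonomy algebra $\mathfrak{hol}_p$ is spanned by endomorphisms of the form $P_\gamma^{-1}\circ R_q(X,Y)\circ P_\gamma$, where $\gamma$ runs over paths from $p$ to $q$ and $X,Y\in T_qM$. Given $w\in\Im(L)_p$, parallelism of $\Im(L)$ gives $P_\gamma(w)\in\Im(L)_q$, so $R_q(X,Y)P_\gamma(w)=0$ by the previous paragraph, whence $P_\gamma^{-1}R_q(X,Y)P_\gamma(w)=0$. Thus every generator of $\mathfrak{hol}_p$ annihilates $w$, so $\mathfrak{hol}_p$ acts trivially on $\Im(L)_p$. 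Since $M$ is simply connected, $\Hol{g}$ is connected with Lie algebra $\mathfrak{hol}_p$, and hence $\Hol{g}$ fixes every vector of $\Im(L)_p$.

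Finally I would apply the holonomy principle: on a simply-connected manifold every $\Hol{g}$-fixed vector in $T_pM$ extends to a unique globally defined parallel vector field. Choosing a basis of $\Im(L)_p$ thus yields $r$ linearly independent parallel vector fields; since each of them lies in the parallel distribution $\Im(L)$ at $p$, it lies in $\Im(L)$ at every point, and its linear independence at $p$ propagates everywhere because parallel transport is an isomorphism. The main obstacle is the third step, namely passing from the pointwise vanishing of curvature on $\Im(L)$ at all points $q$ to a statement about the holonomy algebra based at the single point $p$; this is precisely what Ambrose--Singer delivers once we know, from the parallelism of $L$, that $\Im(L)$ is preserved by parallel transport.
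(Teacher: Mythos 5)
Your proof is correct, and it shares the paper's two key observations --- that $\Im(L)$ is a parallel distribution of constant rank because $L$ is parallel, and that the hypothesis, combined with the commutation relation \eqref{ricci}, forces every curvature operator to annihilate $\Im(L)$ --- but it finishes by a different mechanism. The paper restricts the Levi-Civita connection to the subbundle $D=\Im(L)$ (legitimate since $D$ is parallel, hence integrable and totally geodesic), identifies the curvature of this restricted connection with the restriction of $R$ to $D$, concludes that the restricted connection is flat, and then extends any $v(p)\in D_p$ to a parallel section; both the role of simple connectedness and the conversion of the hypothesis $L\circ R(X,Y)=0$ into the needed statement $R(X,Y)\circ L=0$ are left implicit there. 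You instead never leave the full Levi-Civita connection and argue at the level of holonomy: the Ambrose--Singer theorem shows that the holonomy algebra $\mathfrak{hol}_p$ annihilates $\Im(L)_p$ (using that parallel transport preserves $\Im(L)$, so the pointwise curvature condition at all $q$ transports back to $p$), simple connectedness makes $\Hol{g}$ connected so the group itself fixes $\Im(L)_p$ vectorwise, and the holonomy principle yields the $r$ parallel fields. The two routes are two standard implementations of ``zero curvature on a parallel subbundle produces a parallel frame'': yours is longer and invokes the heavier Ambrose--Singer theorem, but it is more self-contained and makes explicit exactly where \eqref{ricci} and the simple connectedness of $M$ enter --- two points the paper's five-line proof glosses over.
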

\begin{proof}
At every point $p\in M$ we consider  the subspace 
$D_p:= \textrm{\it Image}(L) \subseteq T_pM $.
 Since the tensor $L$ is parallel, its rank is constant and $D$ is a smooth distribution. Since $L$ is parallel, $D_p$ is integrable and totally geodesic. Then, the restriction of the Levi-Civita connection of $g$ to  the  $D_p$ (considered as a subbundle of the tangent bundle) is well defined, and its curvature is the restriction of the curvature tensor $R$ to $D$.
  The condition  $L^i_s R^s_{\
    jkl}=0 $ implies that  the curvature of the restriction of the $g$-Levi-Civita 
    connection to $D_p$ is zero, so the  connection on the subbundle $D$ is flat. Then, each vector $v(p)\in D_p$ can be extended to a parallel section in $D$. 
\end{proof}

\subsection{Collecting all facts: proof of Theorem~\ref{cov}.} \label{proof:thmcov}

Let $\wt M$ be a  $n+1\ge 4$-dimensional cone manifold of signature 
$(0,n+1)$, $(1,n)$ or $(n-1,2)$. Let $T\wt M = V_0\oplus V_1\oplus \dots\oplus V_\ell$ be a 
maximal   orthogonal nondegenerate decomposition  invariant w.r.t. holonomy group 
and $L$ be a   
selfadjoint endomorphism invariant w.r.t. holonomy group.

We consider the  decomposition~\eqref{eq:Ldecomposition} of $L$ into the
sum of orthogonal projectors and regroup the summands to obtain:
\begin{gather}
  L=\underbrace{\overset{(0)}P L \overset{(0)}P}_{(A)} + \sum_{a\neq
b} \underbrace{\overset{(a)}P L \overset{(b)}P}_{(B)}+\sum_{a\geq 1}
\underbrace{\overset{(a)}P L \overset{(a)}P}_{(C)}
\end{gather}

It is sufficient to show that each   term $(A), (B), (C)$ is a linear combination of projectors $\overset{(\alpha)}{P}$ and an  endomorphism of the form $\sum c_{ij } \tau_i \otimes \tau^*_i$, where $\tau_i$ and $\tau^*_j$ are vectors and 1-forms invariant with respect to the holonomy group. For the $(A)$-component  it is nothing to prove: every endomorphism from $V_0$ to $V_0$  has this form. For the $(B)$-components, we have proved this in Lemma~\ref{thm:nondiag}. For the $(C)$-components,   this follows from 
Lemma \ref{thm:kernelR}. 
 Theorem~\ref{cov} is proven.

\section{Proof of  Theorem~\ref{cov1}.}
\label{sec:proof_of_the_main_result}

As in the proof of  Theorem~\ref{cov}, we  consider the
 maximal orthogonal  decomposition $$T\wt M=V_0\oplus\dots \oplus V_\ell,$$ where
 $V_0$ is a   $\hat g$-nondegenerate  subspace of maximal simension  such that 
 the holonomy group  acts trivially on it and $V_\alpha,1\leq\alpha \leq \ell$ are  $\hat g$-nondegenerate   subspaces invariant w.r.t. the holonomy group. We denote by $k$ the  dimension of the subspace  where the holonomy group acts trivially, i.e.,  the  number of 
linearly independent parallel  vector fields of $\hat g$.
 We need   to prove that 
 the possible values $(k,\ell)$ are  $(k=0,\dots ,n-2, \ell=1,\dots, \lfloor
  \tfrac{n-k+1}{3}\rfloor)$..

 We first prove  $k\le dim(\wt M)-3 = n-2$. Indeed, suppose we have $n-1$ parallel vector fields.  By Lemma \ref{thm:cone-not-parallel}, parallel vector fields $u$  and the cone vector field $\vec v$ are linearly independent at points such that $R_{ijkm}\ne 0$.  We take a basis at the tangent space $T_p\wt M$ (for almost every point  $p$ such that $R_{ijkm}\ne 0$)
 such that the first $n-1$ vectors of the basis are the parallel vector fields, the $n$th 
 vector     is the vector $\vec v$. 
As we have shown in the proof of Lemma \ref{thm:cone-not-parallel}, the parallel vector fields $u$ and the cone vector field $\vec v$ 
satisfy 
$$
\vec v^s R^i_{\ s jm} = u^s R^i_{\ s jm} =0. 
$$
Then, in this basis, the components $R_{ijms}$ such that at least one of the numbers $i,j,m,s$ is not $n+1$ are zero. The remaining component  $R_{ijms}$ with $i=j=m=s=n+1$ is also zero in view of the symmetries of the curvature tensor. Finally, $R^i_{\  jms}\equiv 0$  which contradicts the assumptions of Theorem \ref{cov1}.

 Let us now show that $\ell$  is at  most $\lfloor
  \tfrac{n-k+1}{3}\rfloor$.

We denote by $U $ the subspace of  $T_p\wt M$ 
such that the holonomy  group acts trivially on it.
For $\alpha=0, \dots,   \ell$, we  put   $s_\alpha=\dim V_\alpha$ and for $\alpha=1, \dots,   \ell$ we put 
$r_\alpha=\dim U\cap V_\alpha$. Evidently,  $$\dim \wt M=n+1=s_0+s_1+\dots +  s_\ell
\mbox{ and } k=s_0+r_1+\dots+r_\ell.$$ 
Next,  let us show for every   $\alpha =1,...,\ell$  we have  $s_\alpha\geq r_\alpha+3$.

Let $\overset{(\alpha)}{R}{}^i_{\ jkl}$ be the restriction of the  Riemannian curvature
to $V_\alpha$. Then, 
for every $u\in U\cap V_\alpha$ we evidently have $ u_i \overset{(\alpha)}{R}{}^i_{\ jkl}=0.$
Moreover, as we have shown in Lemma \ref{thm:split},  for the cone vector field $\vec v:= v_{,}^{\ i}$,  the vector
$\overset{(\alpha)}{\vec v}{}^i=\overset{(\alpha)}{P}{}\vec v$   is the gradient of a certain function 
$\overset{(\alpha)}v$ satisfying \eqref{eq:hom} (w.r.t. to $g_\alpha$), see  Lemma~\ref{thm:hom},  and therefore is nonzero at almost all  points  and also  satisfies 
$$ \overset{(\alpha)}{v}{}^j \overset{(\alpha)}{R}{}^i_{\ jkl}=0.$$

By Lemma~\ref{thm:cone-not-parallel}, $\overset{(\alpha)}{v}{}_i$ is
linearly independent of  the space $U\cap V_\alpha$. Thus, at least $(r_\alpha +
1)$ linearly independent vectors  $u\in V_\alpha$ (at the tangent space of  almost every point)
 satisfy  ${u^j} \overset{(\alpha)}{R}{}^i_{\ jkl}=0$. 

Suppose $\dim V_\alpha= s_\alpha \leq r_\alpha +2$. Then, since 
$\overset{(\alpha)}{R}{}^{i}_{\ jkm}$ 
is $\hat g$-skew-symmetric with respect to the first two indexes $i,j$,
 it must be zero, which contradicts the assumption.  
Therefore, $s_\alpha\ge  r_\alpha+3$. Combining this with 
$n+1=s_0+s_1+\dots+s_\ell \geq s_0+ (r_1+3)+\dots+(r_\ell+3)=k+3\, \ell, $
we obtain $\ell\leq \lfloor \frac{n+1-k}{3}\rfloor$. Theorem~\ref{cov1} is proven.

\begin{rem}
As we explained in Section \ref{sec:Bneq0}, just proved  Theorem \ref{cov1} implies Theorem \ref{th:main} under the additional assumption $B= B(g)\ne 0$. 
\end{rem}

\section{Proof of Theorem \ref{th:main} if  $B=0$ and there exists a solution  $(a,\lambda,\mu)$ with $\mu
\neq 0$.}
\label{sec:zeroB}

\subsection{Scheme of the proof.} We  reduce this case to   the already proven case when $B\ne 0$. The reduction is as follows: 
in Section~\ref{sec:mu_neq_0} we show that in any open subset
on $M$ with compact closure there exists a geodesically equivalent
metric $\bar g$  that  is arbitrary close to $g$ and such that  $\bar B= B(\bar g)\ne 0$. Since geodesically equivalent metrics evidently have the same degree of mobility, we obtain that for any 
connected simply connected 
neighborhood  $U\subseteq M$ 
with compact support the   degree  of mobility of $g_{|U}$ is as in Theorem \ref{th:main}.  
Having this,   in Section~\ref{sec:noncompact} we show  that  on the whole manifold 
 the degree of mobility is as we claim in Theorem \ref{th:main} .

The remaining case, when the extended system    does not admit solutions with
$\mu\neq 0$,  will be considered  in Section~\ref{sec:B=0_isotropic}.

\subsection{How  $B$ changes if we change the metric in the projective class.}\label{how}

\begin{lem}
\label{thm:newB}
Let $g$ and $\bg$ be two nonproportional geodesically equivalent
metrics with degree of mobility $D(g)=D(\bg)\geq 3$ and let $\phi, a,
\lambda$ and $\mu$ be as in Sections~\ref{standard}, \ref{standard1}. Then, the constant 
$\bar B = B(\bar g)$  
is equal to
  \begin{gather}
    \label{eq:barB}
    \bar B = -e^{-2\phi}(\mu + \phi_p \lambda^p)
  \end{gather}
\end{lem}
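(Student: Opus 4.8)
The plan is to first extract a purely algebraic relation between $\lambda_i$, $a_{ij}$ and the differential $\phi_i=\phi_{,i}$, then to show that the right-hand side of \eqref{eq:barB} is a constant function on $M$, and finally to identify this constant with $B(\bg)$. For the first step, raising an index in \eqref{a} gives $a^p_i:=g^{pq}a_{qi}=e^{2\phi}\bg^{pq}g_{qi}$, and comparing with \eqref{lambda} yields the key identity
\[
\lambda_i=-a^p_i\,\phi_p .
\]
Raising indices in \eqref{basic} also gives $a^p_{i,k}=\lambda^p g_{ik}+\delta^p_k\lambda_i$, and raising $i$ in the second line of \eqref{eq:mg+Ba} gives $\lambda^p_{,k}=\mu\,\delta^p_k+B\,a^p_k$. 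From the first identity one reads off the two contractions $a^p_i\phi^i=-\lambda^p$ and $a_{ik}\phi^i=-\lambda_k$, which will be used repeatedly.

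For the second step I differentiate $\lambda_i=-a^p_i\phi_p$ covariantly and substitute the second line of \eqref{eq:mg+Ba}:
\[
\mu g_{ik}+B a_{ik}=\lambda_{i,k}=-(\lambda^p g_{ik}+\delta^p_k\lambda_i)\phi_p-a^p_i\phi_{p,k}.
\]
Contracting this with $\phi^i$ and using $a^p_i\phi^i=-\lambda^p$ and $a_{ik}\phi^i=-\lambda_k$ isolates the only second derivative of $\phi$ that I need:
\[
\lambda^p\phi_{p,k}=\mu\phi_k+2(\lambda^p\phi_p)\phi_k-B\lambda_k .
\]
Now I differentiate $\Phi:=e^{-2\phi}(\mu+\phi_p\lambda^p)$, inserting $\mu_{,k}=2B\lambda_k$ (third line of \eqref{eq:mg+Ba}), then $\lambda^p_{,k}=\mu\delta^p_k+Ba^p_k$ together with $a^p_k\phi_p=-\lambda_k$, and finally the displayed expression for $\lambda^p\phi_{p,k}$. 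A direct check shows every term cancels, so $\Phi_{,k}=0$ and $\Phi$ is a constant. This is the computational heart of the lemma, and it works precisely because all three relations of the extended system act together.

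It remains to identify the constant $\Phi$ with $-B(\bg)$. Here I would use that, by Theorem~\ref{thm:mg+Ba} applied to $\bg$ (which has $D(\bg)=D(g)\ge 3$), the constant $B(\bg)$ is the \emph{unique} number occurring in the $\bg$-extended system; hence it suffices to exhibit one nontrivial solution of the $\bg$-analogue of \eqref{basic} whose extended system uses the constant $-\Phi$. The natural candidate is the solution $(\bar a,\bar\lambda,\bar\mu)$ of the $\bg$-equation that corresponds to $g$, given by the analogues of \eqref{a} and \eqref{lambda} with $\phi$ replaced by $-\phi$; in particular $\bar a^p_i=\bg^{pq}\bar a_{qi}=(a^{-1})^p_i$ and $\bar\lambda_i=\bar a^p_i\phi_p$, while $\bar\mu$ is fixed through \eqref{eq:lambda}. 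One then computes the covariant derivative of $\bar\lambda_i$ with respect to $\bg$ (expressed through $\nabla$ by the Levi-Civita relation \eqref{c1}) and checks that it has the form $\bar\mu\,\bg_{ij}+(-\Phi)\bar a_{ij}$, so that the constant in the $\bg$-extended system is exactly $-\Phi$. I expect this last verification to be the main obstacle: it is the one place where the connection change \eqref{c1}, the conformal factor $e^{\pm 2\phi}$, and the inversion $\bar a=a^{-1}$ must all be combined, and the bookkeeping is delicate. Alternatively, the identification follows from the symmetry of the construction in $g$ and $\bg$ (as in \cite{KM}): applying the already-established constancy to $\bg$ as the base metric shows that the analogous quantity equals $-B(g)$, and matching the two constants at a single convenient point pins down $\Phi=-B(\bg)$.
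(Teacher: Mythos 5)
Your first two steps are correct: the identity $\lambda_i=-a^p_i\phi_p$, the contracted relation $\lambda^p\phi_{p,k}=\mu\phi_k+2(\lambda^p\phi_p)\phi_k-B\lambda_k$, and the conclusion that $\Phi:=e^{-2\phi}(\mu+\phi_p\lambda^p)$ is constant all check out. But constancy of $\Phi$ is not the lemma; the lemma is the identification $\Phi=-\bar B$, and that is precisely what your proposal does not prove. Nothing in your computation involves the extended system of $\bg$, so nothing so far constrains $\bar B$ at all. Your first route for the identification (verify that the solution $(\bar a,\bar\lambda,\bar\mu)$ of the $\bg$-equation built from $g$ satisfies the second equation of the $\bg$-extended system with the constant $-\Phi$) is a legitimate strategy, but the verification you defer as ``delicate bookkeeping'' is the entire content of the lemma, not a routine check to be filled in later. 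Your second route is circular: applying your constancy argument with $\bg$ as base metric shows only that $e^{2\phi}(\bar\mu+\bar\phi_p\bar\lambda^p)$ is constant; it does not show that this constant equals $-B(g)$ --- that statement is again the lemma, with the roles of the two metrics exchanged. And ``matching the two constants at a single convenient point'' cannot work by itself: without a pointwise identity relating the barred data $(\bar\mu,\bar\lambda)$ to the unbarred data $(\mu,\lambda,\phi)$, there is no equation in which the two constants ever meet, and producing such an identity is exactly the missing step.

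For comparison, the paper closes the gap as follows. Substituting $\lambda_{i,j}=\mu g_{ij}+Ba_{ij}$ into the derivative of $\lambda_i=-e^{2\phi}\phi_p\bg^{pq}g_{qi}$ gives the pointwise tensor identity \eqref{eq:phiB},
\[
e^{-2\phi}(\mu+\phi_p\lambda^p)\,\bg_{ij}+B\,g_{ij}=\phi_i\phi_j-\phi_{i,j},
\]
whose right-hand side changes sign under the swap $g\leftrightarrow\bg$ (because $\bar\phi=-\phi$ and $\phi_{i,j}=\phi_{i:j}+2\phi_i\phi_j$). Writing the identity for both orderings of the two metrics and adding, one gets
\[
\bigl(e^{-2\phi}(\mu+\phi_p\lambda^p)+\bar B\bigr)\bg_{ij}+\bigl(e^{-2\bar\phi}(\bar\mu+\bar\phi_p\bar\lambda^p)+B\bigr)g_{ij}=0,
\]
and Weyl's theorem that $g$ and $\bg$ are nonproportional at almost every point forces both scalar coefficients to vanish, which is \eqref{eq:barB}. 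If you want to keep your own setup, the quickest repair is to return to your displayed equation $\mu g_{ik}+Ba_{ik}=-(\lambda^p\phi_p)g_{ik}-\lambda_i\phi_k-a^p_i\phi_{p,k}$ and, instead of contracting with $\phi^i$ (which loses information), contract with the inverse endomorphism $(a^{-1})^i_j=e^{-2\phi}g^{is}\bg_{sj}$: using $\lambda_i=-a^p_i\phi_p$, this yields exactly \eqref{eq:phiB}, after which the swap-and-add argument together with Weyl's nonproportionality finishes the proof.
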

\begin{proof}
  
  Since $D(g)\geq 3$, for every $\bg$ there exists  a  triple
  $(a_{ij},\lambda_i,\mu)$  satisfying ~\eqref{eq:mg+Ba} such that  
  \begin{gather}
    \label{eq:ae}
    a_{i j} = e^{2\phi} g_{i p} \bar g^{p q} g_{q
      j}, \ \ \lambda_k= \frac{1}{2} \partial_k\left( a_{pq} g^{pq}\right).
  \end{gather}
  By direct computation,
  \begin{multline}
    \label{eq:lambdae}
    \lambda_k{=}
   \frac{1}{2} \partial_k\left( a_{ij} g^{ij}\right)\overset{\eqref{eq:ae}}{=}
    e^{2\phi}\phi_k \bg^{p q}g_{p q} +e^{2\phi}
    g_{p q} \bg^{p q}_{\ \
      ,k}\overset{\eqref{LC}}{=}\\
    = e^{2\phi}\phi_k \bg^{p q}g_{p q}+
    \frac{1}{2}e^{-2\phi}g_{p q}(-2\phi_k\bg^{p q} -
    \phi_s \bg^{p s}\delta^q_k- \phi_s \bg^{q
      s}\delta^p_k)=-e^{2\phi} \phi_p \bg^{p
      q}g_{q k}
  \end{multline}
  Let us calculate $\lambda_{i,j}$.
  \begin{multline}
    \lambda_{i,j}=\nabla_j\left(-e^{2\phi} \phi_p \bg^{p
        q}g_{q i}\right)={}-2 e^{2\phi}\phi_j \phi_p
    \bg^{p q}g_{q i} - e^{2\phi} \phi_{p,j}
    \bg^{p q}g_{q i} -e^{2\phi} \phi_p \bg^{p
      q}_{\ \ ,j}g_{q
      i} \overset{\eqref{LC}}{=}\\
    = {}-2 e^{2\phi}\phi_j \phi_p \bg^{p q}g_{q i} -
    e^{2\phi} \phi_{p,j} \bg^{p q}g_{q i}+ e^{2\phi}
    \phi_p g_{q i}(2\phi_j \bg^{p q}+\phi_s
    \bg^{s p}\delta^q_j+\phi_s \bg^{s q}\delta^p_j)=\\
    =-e^{2\phi}\phi_{p,j}\bg^{p q}g_{q i}-g_{i
      j}(\phi_p \phi_q \bg^{p
      q})+e^{2\phi}\phi_p \phi_j \bg^{p q} 
      g_{q i}
  \end{multline}

 Next, we  substitute $\lambda_{i,j}=\mu g_{ij}+B a_{ij}$ which is the second equation of 
 \eqref{eq:mg+Ba}  and rearrange the
  components to obtain 
  \begin{gather}
    (\mu + \phi_p \lambda^p)g_{ij}= e^{2\phi}\bg^{p
      q}g_{q i}(\phi_p \phi_j-\phi_{p,j}-B g_{p
      j})
\end{gather}
Multipling the equation by $e^{-2\phi} g^{i p}\bg_{p q}$ and renaming
the indices we obtain 
\begin{gather}
  \label{eq:phiB}
  e^{-2\phi}(\mu + \phi_p \lambda^p) \bg_{i j} + B g_{i j} =
  \phi_i \phi_j-\phi_{i,j}
\end{gather}

Let us now swap metrics $g$ and $\bg$ and rewrite~\eqref{eq:phiB} in
the form:
\begin{gather}
  e^{-2\bar \phi}(\bar \mu + \bar \phi_p \bar \lambda^p)
  g_{i j} + \bar B \bg_{i j} = \bar \phi_i \bar \phi_j-\bar \phi_{i:j}
\end{gather}
Here we denote all the components corresponding to the chosen metric
$\bg$ with bar and derivation with respect to the Levi-Civita  $\bar \nabla$ of $\bar g$  by 
semicolon. It is easy to see that $\bar \phi=-\phi$
and $$\phi_{i,j}=\phi_{i:j}+2\phi_i\phi_j$$

We substite $\phi_i \phi_j-\phi_{i,j}=-(\bar \phi_i \bar \phi_j-\bar
\phi_{i:j})$ in~\eqref{eq:phiB} to obtain:
\begin{gather}
  e^{-2\phi}(\mu + \phi_p \lambda^p) \bg_{i j} + B g_{i
    j} = -e^{-2\bar \phi}(\bar \mu + \bar \phi_p \bar \lambda^p)
  g_{i j} - \bar B \bg_{i j}
\end{gather}
Thus, 
$$ \left(e^{-2\phi}(\mu + \phi_p \lambda^p) + \bar B\right)
\bg_{i j}=\left(-e^{-2\bar \phi}(\bar \mu + \bar \phi_p \bar
  \lambda^p) -B \right)g_{ij}$$
By Weyl \cite{Weyl2}, $g$ and $\bg$ are   nonproportional at almost every points, so 
both scalar coefficients vanish and the formula~\eqref{eq:barB} is proven.
\end{proof}

\subsection{The local existence of a geodesically equivalent metric $\bar g$ 
  with $\bar B= B(\bar g)\neq 0$.}
\label{sec:mu_neq_0}

\begin{lem}
  \label{thm:mu_neq_0}
  Let $(M,g)$ be  a Lorentzian manifold with $D(g)\geq 3$. Assume  $B=0$
  and suppose  
  that the extended system~\eqref{eq:mg+Ba} admits a  solution
  $(a,\lambda,\mu)$ with $\mu\neq 0$.
 Let $U$ be an open subset in $M$ with compact closure.

  Then, there exists a metric $\bar g$  on $U$ that it  is 
  geodesically equivalent to the restriction $g|_U$,  such that the
  corresponding constant $\bar B:= B(\bar g) \neq 0$, and such that $\bar g$ is arbitrary close to $g$ in  the $C^2$-topology.
\end{lem}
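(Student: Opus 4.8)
The plan is to perturb the trivial solution $(g,0,0)$ of the extended system \eqref{eq:mg+Ba} in the direction of the given solution $(a,\lambda,\mu)$, and to read off the new constant from the formula \eqref{eq:barB} of Lemma~\ref{thm:newB}. First observe that, since $B=0$, the third equation of \eqref{eq:mg+Ba} gives $\mu_{,i}=0$, so the function $\mu$ is a nonzero constant; moreover $\lambda\not\equiv 0$, since otherwise the second equation would force $\mu=0$. As $(g,0,0)$ and $(a,\lambda,\mu)$ both solve the linear system \eqref{eq:mg+Ba}, so does the one-parameter family $(\hat a_s,\,s\lambda,\,s\mu)$ with $\hat a_s:=g+s\,a$, for every $s\in\R$.

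Because $\hat a_0=g$ is nondegenerate and $\overline U$ is compact, there is $\varepsilon>0$ such that $\hat a_s$ is nondegenerate on $\overline U$ for all $|s|<\varepsilon$. Each such $\hat a_s$ is a nondegenerate solution of \eqref{basic}, hence (inverting \eqref{eq:ae}) corresponds to a metric $\bar g_s$ on $U$ that is geodesically equivalent to $g|_U$; the construction depends smoothly on $s$ with $\bar g_0=g$, so $\bar g_s\to g$ in the $C^2$-topology on $\overline U$ as $s\to 0$. For $s\neq 0$ the tensor $\hat a_s$ is not a constant multiple of $g$ --- indeed $a$ is not, since $a=c\,g$ with constant $c$ would force $\lambda\equiv 0$ --- so $\bar g_s$ is nonproportional to $g$, and $D(\bar g_s)=D(g)\geq 3$.

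We may thus apply Lemma~\ref{thm:newB} to the pair $(g,\bar g_s)$, whose associated solution for $g$ is precisely $(\hat a_s,s\lambda,s\mu)$. Writing $\phi_s$ for the corresponding function \eqref{phi}, formula \eqref{eq:barB} reads
\[
  \bar B_s=-e^{-2\phi_s}\bigl(s\mu+s\,(\phi_s)_{,p}\lambda^p\bigr)=-s\,e^{-2\phi_s}\bigl(\mu+(\phi_s)_{,p}\lambda^p\bigr).
\]
Since $\hat a_s\to g$ as $s\to 0$, we have $\phi_s\to 0$ together with $(\phi_s)_{,p}\to 0$ uniformly on $\overline U$; hence $\mu+(\phi_s)_{,p}\lambda^p\to\mu\neq 0$ and stays bounded away from $0$ for all small $s$. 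As $e^{-2\phi_s}>0$, it follows that $\bar B_s\neq 0$ for every sufficiently small $s\neq 0$. Fixing such an $s$ and setting $\bar g:=\bar g_s$ completes the proof.

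The routine technical points are the uniform-in-$s$ nondegeneracy of $\hat a_s$ and the $C^2$-convergence $\bar g_s\to g$, both guaranteed by the compactness of $\overline U$ and the smooth dependence on $s$ (concretely, $e^{-2\phi_s}=|\det(g+sa)/\det g|$ is smooth in $(s,x)$ and equals $1$ at $s=0$, which also yields $(\phi_s)_{,p}\to 0$). The conceptual core is the expansion $\bar B_s=-s\mu+o(s)$: because $\mu$ is a nonzero constant, the leading order of the perturbation is $-s\mu$, so switching on the deformation instantly produces a nonvanishing $B$. This is exactly the step where the hypotheses $B=0$ and $\mu\neq 0$ are used.
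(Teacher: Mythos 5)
Your proof is correct and takes essentially the same approach as the paper's: the paper also perturbs along a one-parameter family of solutions of the extended system (it takes $(g+t\,\lambda\otimes\lambda,\ t\lambda,\ t)$ after normalizing $\mu=1$, instead of your $(g+sa,\ s\lambda,\ s\mu)$), passes to the corresponding geodesically equivalent metrics on $U$, and applies Lemma~\ref{thm:newB}. The paper's conclusion via the nonzero $t$-derivative of $\mu(t)+\phi_p(t)\lambda^p(t)$ at $t=0$ is the same first-order-in-the-parameter argument that you carry out by factoring out $s$ and letting $s\to 0$.
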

\begin{proof}

  Since $B=0$, the extended system \eqref{eq:mg+Ba} reads 
  \begin{gather}
    \label{eq:mg} 
    \left\{
      \begin{array}{ll} 
        a_{i j,k}&=\lambda_i g_{j k} + \lambda_j g_{i
          k}\\
        \lambda_{i,j}&=\mu g_{i j}\\
        \mu_{,i}&=0.
      \end{array} \right.
  \end{gather}
  Thus,   $\mu$ is a
  constant. By assumption, there exists a
  solution $(a,\lambda,\mu)$ with $\mu\neq 0$. Without loss of
  generality we can assume $\mu=1$.

  Consider the one-parameter family   $(a_{ij}(t):=t
  \lambda_i \lambda_j + g_{ij}, \lambda_i(t):=t\lambda_i,\mu(t):=t\mu=t)$. It is easy to see that for each $t$ the triple  $(a(t), \lambda(t), \mu(t))$
  satisfies~\eqref{eq:mg}.
  
  Evidently, since $U$ has a compact closure, there exists
  (sufficiently small) $t_0>0$, such that for all $-t_0<t<t_0$ the  solution
  $a_{ij}(t)$ is nondegenerate everywhere on $U$ and the signature of
  the corresponding metric $\bar g(t)$ coincides with that of $g$.
    
  The triple $(a_{ij}(t),\lambda_i(t),\mu(t))$ determines the metric $\bar
  g(t)$ and the 1-form $\phi(t)$ on $U$. By Lemma~\ref{thm:newB}
  $B(t):= B(\bar g(t))=-e^{-2\phi(t)}(\mu(t) + \phi_p(t)
  \lambda^p(t)).$

  Our goal is to show that there exists $t$  such that  $\bar B(t)\neq
  0$. Since $e^{-2\phi(t)}> 0$, it is sufficient to prove that
  $B^\ast(t)=\mu(t) + \phi_p(t) \lambda^p(t) > 0$
  for a certain $t$.   Let us calculate the  $\tfrac{d}{dt}$-derivative of $B^*$  at  $t=0$:
  $$\left. \tfrac{d}{dt} \right|_{t=0} B^\ast(t) = \left. \frac{d}{dt}
  \right|_{t=0} \left( t +t \lambda^p\phi_p(t)\right)= 1+\phi_p(0)
  \lambda^p=1\neq 0.$$

  Since the  smooth function $B^\ast(t)$ has non-zero derivative at the
  point $t=0$, there exists sufficiently small positive $t<t_0$ such that
  $B^\ast(t)$ and, therefore, $B(t)$ is not zero. Then,  the metric
  $\bar g=\bar g(t)$ satisfies the requirements.
\end{proof}

\subsection{Transition ``local''$\longrightarrow$ ``on a simply-connected manifold''.}
\label{sec:noncompact}

\begin{lem}
  \label{thm:noncompact}
  Let $(M,g)$ be a connected pseudo-Riemannian manifold, and
  $M=\cup_{s=1}^\infty M_s$, where $M_s$ are open connected subsets in
  $M$ and $M_s\subset M_{s+1}$. Denote by $g_s$ the restriction of $g$
  to  $M_s$. Then,  there exists $k$ such that for every $k'>k$ we have  $D(g)=D(g_{k'})$.
\end{lem}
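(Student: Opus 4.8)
The plan is to realize the degree of mobility as the dimension of the space of flat sections of a local system and to exploit that a nested exhaustion by connected opens eventually stabilizes the monodromy. Fix a point $p\in M$; since $M=\bigcup_s M_s$ with $M_s\subset M_{s+1}$, after discarding finitely many terms I may assume $p\in M_s$ for all $s$. For a connected open $U\subseteq M$ write $\mathrm{Sol}(U)$ for the solution space of~\eqref{basic} on $U$, so that $D(g_s)=\dim\mathrm{Sol}(M_s)$ and $D(g)=\dim\mathrm{Sol}(M)$.

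First I would record the crucial unique-continuation input: the system~\eqref{basic} is of finite type. Its standard prolongation closes on the triple $(a_{ij},\lambda_i,\mu)$ — the first two equations are~\eqref{basic} together with $\lambda_k=\tfrac12(\tr_g a)_{,k}$, while differentiating and commuting covariant derivatives expresses $\lambda_{i,j}$ and then $\mu_{,i}$ algebraically through $(a,\lambda,\mu)$ and the curvature of $g$ (this is the closure underlying Theorem~\ref{thm:mg+Ba}, and it holds for every $g$, independently of whether $D(g)\ge 3$). Thus a solution is a parallel section of a linear connection on a finite-rank bundle, hence is determined on a connected set by its value at one point. Consequently the evaluation map $\mathrm{ev}_p\colon\mathrm{Sol}(U)\to\mathcal F_p$, $\sigma\mapsto(a,\lambda,\mu)(p)$, into the fixed finite-dimensional fibre $\mathcal F_p\subseteq S^2T^*_pM\oplus T^*_pM\oplus\R$ is injective for every connected $U\ni p$; in particular restriction $\mathrm{Sol}(V)\to\mathrm{Sol}(U)$ along an inclusion $U\subseteq V$ of connected opens containing $p$ is injective.

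Next I set $W_s:=\mathrm{ev}_p(\mathrm{Sol}(M_s))\subseteq\mathcal F_p$ and $W:=\mathrm{ev}_p(\mathrm{Sol}(M))\subseteq\mathcal F_p$; injectivity of $\mathrm{ev}_p$ gives $\dim W_s=D(g_s)$ and $\dim W=D(g)$. Since $M_s\subset M_{s+1}\subset M$ and $\mathrm{ev}_p$ commutes with restriction, one has $W\subseteq\cdots\subseteq W_{s+1}\subseteq W_s\subseteq\cdots\subseteq\mathcal F_p$. Being a decreasing chain of subspaces of the finite-dimensional space $\mathcal F_p$, the $W_s$ stabilize: there is $k$ with $W_{k'}=W_k$ for all $k'\ge k$, and then $\bigcap_s W_s=W_k$. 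It remains to identify $W$ with $\bigcap_s W_s$.

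The one step that needs care — and where I expect the only real content beyond bookkeeping — is the inclusion $\bigcap_s W_s\subseteq W$, i.e.\ that a jet realizable on every $M_s$ is realizable globally. Given $\xi\in\bigcap_s W_s$, choose $\sigma_s\in\mathrm{Sol}(M_s)$ with $\mathrm{ev}_p(\sigma_s)=\xi$. On the connected set $M_s$ the solutions $\sigma_{s+1}|_{M_s}$ and $\sigma_s$ share the value $\xi$ at $p$, so by unique continuation they agree; hence the $\sigma_s$ are compatible on overlaps and, since every point of $M$ lies in some $M_s$, glue to a single smooth field $\sigma$ on $M$. As~\eqref{basic} is a local condition and $\sigma=\sigma_s$ on each $M_s$, the field $\sigma$ solves~\eqref{basic} on all of $M$, so $\xi=\mathrm{ev}_p(\sigma)\in W$. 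Therefore $W=\bigcap_s W_s=W_k$, and for every $k'\ge k$ we obtain $D(g)=\dim W=\dim W_k=\dim W_{k'}=D(g_{k'})$, as claimed. The main obstacle is thus the finite-type/unique-continuation property of the first paragraph; the remainder is the stabilization of a descending chain of subspaces together with the gluing just described.
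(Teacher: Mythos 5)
Your proof is correct and follows essentially the same route as the paper's: both rest on unique continuation for solutions of \eqref{basic} (giving injectivity of restriction, respectively of evaluation at a point), stabilization of a monotone chain in a finite-dimensional setting, and gluing the resulting coherent family of local solutions over the exhaustion. The only real difference is presentational: you justify the unique-continuation input explicitly via the finite-type prolongation and track solutions through the evaluation map at a fixed point $p$, whereas the paper simply asserts that two solutions agreeing on an open subset agree everywhere and then compares the dimensions of the spaces $Sol(g_s)$ under the injective restriction maps $\phi_{s'}$.
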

\begin{proof}
  Evidently, for every solution $a\in Sol(g_s)$, its restriction to
  $M_{s'}\subset M_s$ with $s'<s$ is a solution of the main
  equation~\eqref{basic} for $g_{s'}$.
We define the linear map $\phi_{s'}:Sol(g_s)\to Sol(g_{s'})$ by 
  $$\phi_{s'}(a)=\left. a\right|_{s'}$$

  If two solutions $a_{ij}$ and $a'_{ij}$ coincide on an open subset,
  they coincide everywhere. Thus, for every $s'$ we have $\ker
  \phi_{s'} = 0$, so we obtain
  $$\dim Sol(g_{s'})\geq \dim Sol(g_s)\geq \dim Sol(g).$$
  
  Then $D(g_1),D(g_2),...,D(g_s),...$ is a  semidecreasing (in the sence $D(g_s)\ge D(s')$ for $s<s'$)  sequence of
  natural  numbers. Therefore, there exists a number $k$ such that
  $D(g_k)=D(g_{k'})$ for all $k'\geq k$. As we explained above,
  $D(g)\leq D(g_k)$. Let us show that $D(g)\geq D(g_k)$.

  Consider an arbitrary $k'\geq k$. Then $\phi_k (Sol(g_{k'}))\subset
  Sol(g_k)$ and $\dim Sol(g_{k'}) = \dim Sol(g_k)$. Since $\phi_k$ is
  a linear map with zero kernel, we have $\phi_k (Sol(g_{k'})) =
  Sol(g_k)$. Thus, every solution $a\in Sol(g_k)$ on $M_k$ can be
  uniquely extended to the solution $a\in Sol(g_{k'})$ on $M_{k'}$.

  Now we consider $\phi_k: Sol(g)\to Sol(g_k)$. Our goal is to show
  that $\phi_k( Sol(g)) = Sol(g_k)$. We choose an arbitrary $a\in
  Sol(g_k)$ on $M_k$ and define its extension $A\in Sol(g)$ on $M$ in
  the following way: For every point $P\in M$ there exists $k'\geq k$
  such that  some neighborhood of $P$ lies in $M_{k'}$. Then there exists
  extension $a'\in Sol(g_{k'})$ of $a$, such that $\phi_k a' =a$. We
  define $A(P):=a'(P)$. Clearly, this construction does not depend on
  the choice of $k'$, so $A(P)$ is well-defined for all $P \in M$.
  By construction it satisfies~\eqref{basic} on $M$. Then, $A\in
  Sol(g)$ and $\phi_k (A)=a \in Sol(g_k)$.

  We obtain that $\phi_k(Sol(g))=Sol(g_k)$ and, therefore, $\dim
  Sol(g_k)=\dim Sol(g)$.
\end{proof}

Combining Lemma~\ref{thm:mu_neq_0} and 
Lemma~\ref{thm:noncompact}, we obtain Theorem \ref{th:main} under the additional assumption that $B=0$ and 
$\mu\ne 0$. Indeed, take a  sequence $M_s$ of 
  simply-connected connected  open subsets of $M$ 
    such that 
      $M_s\subset M_{s+1}$, 
   each $M_s$ has compact closure, and 
     $\bigcup_{s  = 1}^\infty M_s = M$.   Then, by
  Lemma~\ref{thm:noncompact}, there exists $k$ such that 
 the degree of monbility of    $ g_k=g|_{M_k}$  is $D(g)$. 
 By  Lemma~\ref{thm:mu_neq_0},  there exists $\bar
  g_k$ on $M_k$ which is geodesically equivalent to $g_k$ on $M_k$,
  with $\bar B\neq 0$.
  Then, $D(g)=D(g_k)=D(\bar g_k).$
  
  Since $\bar g_k$ satisfies the conditions of 
  Theorem~\ref{th:main} and has $\bar B= B(\bar g_k)\ne 0$, we have 
     $D(g)=D(\bar g_k)=\frac{k(k+1)}{2} + \ell$ for a
  certain $k\in \{0,1,...,n-2\}$ and $1 \le \ell \le \lfloor
  \tfrac{n-k+1}{3}\rfloor$. Theorem \ref{th:main}
  is proved under the assumption that $B=0$ but there exists a solution $(a, \lambda, \mu)$ with $\mu\ne 0$.

\section{ Proof of Theorem \ref{th:main} if all solutions $(a, \lambda, \mu)$ have $\mu =0$. }
\label{sec:B=0_isotropic}
In fact, we show that in this case the list of degrees of mobilities of $g$ is smaller than in the generic case $B\ne 0$:

\begin{lem}
  \label{thm:B=0_isotropic}
  Let $g$ be a Lorentzian metric on a connected simply-connected
  manifold $M$   admitting a metric $\bar g$ that 
   is geodesically equivalent but not affinely equivalent to $g$. Suppose that
  $D(g)\geq 3$, the corresponding constant $B$ is equal to 0,  and that every solution  $(a, \lambda, \mu)$ 
  of~\eqref{eq:mg+Ba} has  $\mu=0$.
  
  Then,
  $D(g)=\tfrac{k(k+1)}{2}+\ell,$
  where $1\leq k\leq n-3$ and $2\leq \ell\leq
  \lfloor\frac{n-k-1}{3}\rfloor$.
\end{lem}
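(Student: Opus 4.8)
The plan is to first rigidify the solutions using $B=0$ and $\mu\equiv 0$, then to reduce the computation of $D(g)$ to a Riemannian cone problem on a ``screen'' and invoke the already proven Theorems~\ref{cov} and~\ref{cov1}.

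\emph{Reduction to a null parallel field.} With $B=0$ and $\mu\equiv0$ the extended system~\eqref{eq:mg+Ba} collapses to $a_{ij,k}=\lambda_ig_{jk}+\lambda_jg_{ik}$ together with $\lambda_{i,j}=0$, so every $\lambda$ occurring in a solution is a \emph{parallel} $1$-form; since $\bar g$ is not affinely equivalent to $g$, at least one such $\lambda$ is nonzero. I would next show that any such $\lambda$ is isotropic. Indeed, if $c:=\lambda_p\lambda^p\neq0$, choose (on the simply connected $M$) a function $f$ with $f_{,i}=\lambda_i$, so $f_{,ij}=\lambda_{i,j}=0$, and check that the $1$-form $v_i:=\tfrac1c\bigl(a_{ij}\lambda^j-f\lambda_i\bigr)$ satisfies $v_{i,j}=g_{ij}$. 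Then $(\tilde a_{ij}:=v_iv_j,\ \tilde\lambda_i:=v_i,\ \tilde\mu:=1)$ is a solution of~\eqref{eq:mg+Ba} with $\tilde\mu\neq0$, contradicting our hypothesis. Hence every admissible $\lambda$ is a parallel \emph{null} $1$-form; fix a nonzero one $\lambda^0$ and put $\xi:=(\lambda^0)^{\sharp}$, a parallel null vector field. In particular $\xi$ is a nonzero parallel vector field, so the number $k$ of independent parallel vector fields satisfies $k\ge1$.

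\emph{Splitting off the count.} Writing $D(g)=\dim\mathrm{Par}+\dim\Lambda$, where $\mathrm{Par}$ is the space of parallel symmetric $(0,2)$-tensors (the solutions with $\lambda=0$) and $\Lambda$ the space of admissible $\lambda$'s, I observe that the space of parallel vectors carries a positive-semidefinite induced form whose kernel is exactly $\langle\xi\rangle$ (the remaining parallel directions are spacelike and orthogonal to $\xi$). Thus the only null parallel $1$-forms are the multiples of $\lambda^0$, so $\dim\Lambda=1$ and $D(g)=\dim\mathrm{Par}+1$. The decisive structural point is then that the \emph{screen} is a cone: choosing a null $\eta$ with $g(\xi,\eta)=1$, $\eta\perp W$, where $W=\xi^{\perp}/\langle\xi\rangle$ carries the holonomy-invariant positive-definite metric $h$, a direct computation for a solution with $\lambda=\alpha\lambda^0$ shows that all blocks of $a$ are parallel except that $\sigma_a:=a(\eta,e_a)$ satisfies $\sigma_{a,b}=\alpha\,h_{ab}$ along screen directions, i.e. $\sigma$ solves~\eqref{eq:hom} on $W$. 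Hence (locally) $W$ is a \emph{Riemannian} cone manifold, and Theorems~\ref{cov},~\ref{cov1} apply to it: decomposing $W=W_0\oplus W_1\oplus\dots\oplus W_\ell$ with $W_0$ flat of dimension $k-1$ and the $W_i$ irreducible non-flat, each non-flat block has $\dim W_i\ge3$, the screen's invariant symmetric forms number $\tfrac{(k-1)k}2+\ell$, and since $\sum_i\dim W_i=n-1-k$ we get $\ell\le\lfloor\tfrac{n-k-1}{3}\rfloor$ (whence in particular $k\le n-3$).

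\emph{The count and the main obstacle.} It remains to assemble $\dim\mathrm{Par}$ from the invariant symmetric forms on $T_pM$ relative to the filtration $\langle\xi\rangle\subset\xi^{\perp}\subset T_pM$. Invariance under the screen rotations fixes the screen block (contributing $\tfrac{(k-1)k}2+\ell$) and confines the mixed and null-plane components to the flat directions; invariance under the \emph{translational} (nilpotent) part of the holonomy then removes the $\xi\xi$- and $\xi\eta$-components and leaves exactly the $k-1$ products $\lambda^0\tau_j$ of $\lambda^0$ with the flat parallel $1$-forms $\tau_j$, giving $\dim\mathrm{Par}=\tfrac{(k-1)k}2+\ell+(k-1)=\tfrac{k(k+1)}2+\ell-1$ and hence $D(g)=\tfrac{k(k+1)}2+\ell$. \textbf{The main obstacle is precisely this translational bookkeeping together with the sharp lower bound $\ell\ge2$.} Because the decomposition~\eqref{decomposition} is degenerate along the null line, Theorem~\ref{cov} does not apply to $T_pM$ itself (cf. example~\ref{ex2}), so the contribution of the null plane $\langle\xi,\eta\rangle$ and its coupling to the screen through the translational holonomy must be analysed by hand; and the case $\ell\le1$ must be excluded separately, by showing it would force a nondegenerate parallel splitting of $(M,g)$ and thereby a solution with $\mu\neq0$, contradicting the standing hypothesis.
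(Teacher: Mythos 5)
Your opening steps are sound and essentially coincide with the paper's: the trick of producing a $1$-form $v_i$ with $v_{i,j}=g_{ij}$ (hence a solution with $\mu\neq 0$) out of a parallel $\lambda$ that is non-isotropic, respectively non-orthogonal to some parallel form, is exactly Lemma~\ref{thm:v_perp_l}, and the resulting decomposition $D(g)=\dim Par(g)+1$ is the first part of the paper's proof. The gap begins where you pass to the screen $W=\xi^{\perp}/\langle\xi\rangle$. First, $W$ is a quotient bundle, not a pseudo-Riemannian manifold, so Theorems~\ref{cov} and~\ref{cov1} --- statements about the holonomy of cone \emph{manifolds} --- cannot be invoked for it without substantial extra work identifying the screen holonomy with the holonomy of an honest Riemannian cone; moreover your ``direct computation'' producing a solution of \eqref{eq:hom} on $W$ is ill-defined as stated, since $\sigma_a=a(\eta,e_a)$ depends on the choice of $\eta$, and no parallel choice of $\eta$ exists here (a parallel null $\eta$ with $g(\xi,\eta)=1$ would be a parallel vector field not orthogonal to $\lambda^0$, contradicting Lemma~\ref{thm:v_perp_l}).

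Second, and more seriously, the steps you defer as ``translational bookkeeping'' are not merely tedious: as stated, their claimed outcome is wrong. The identification $\dim W_0=k-1$ is not automatic --- for a pp-wave the screen representation is trivial (so $W_0=W$) while $k=1$, because a parallel section of the screen bundle need not lift to a parallel vector field on $M$; the translational holonomy obstructs the lift. Likewise, your formula $\dim Par=\tfrac{(k-1)k}{2}+\ell+(k-1)$ assigns one invariant form per non-flat \emph{screen} block, but invariant forms on $T_pM$ are constrained far more strongly: on an indecomposable Lorentzian de~Rham block the only invariant symmetric forms are $\const\cdot g_1$ and $\const\cdot\lambda\otimes\lambda$ (this is the result of \cite{SK} the paper relies on), no matter into how many invariant sub-bundles its screen splits --- and its screen \emph{does} split, into the eigen-sub-bundles of the parallel endomorphism induced by $a$, one for each constant eigenvalue $\rho_2,\dots,\rho_m$ of Lemma~\ref{thm:mu=0_jordan}, of which there are $m-1\ge 2$ and a priori more than $2$. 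So your $\ell$ (screen blocks) is not the paper's $\ell$ (de~Rham blocks of $M$ plus one), and the two counts disagree whenever $m>3$; only a genuine analysis of how the translational part couples the null plane to the screen can resolve this, and that analysis is exactly what is missing. Finally, the sharp bound $\ell\ge 2$ --- equivalently, that the Lorentzian de~Rham block has dimension at least $6$ --- is the hardest part of the paper's proof (Lemmas~\ref{thm:mu=0_jordan}, \ref{thm:rho_1,rho_2} and Corollary~\ref{thm:lorentz_dim}): a Jordan-form analysis of $a$ combined with the Splitting Lemma of \cite{Bols}, showing that a too-small structure forces either a $1$-form with $v_{i,j}=g_{ij}$ (hence $\mu\neq0$) or an extra parallel $1$-form. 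In your write-up this is a one-sentence wish; in reality it is the core of the argument, not a final bookkeeping step.
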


\subsection{ Technical statements that will be used in proof of Lemma \ref{thm:B=0_isotropic}. } 
Within this section we assume that $(M,g)$ is a connected  simply connected $n\ge 3$-dimensional  manifold of riemannian or lorentzian signature  with $D(g)\ge 3$ and $B=0$. 

\begin{lem}
  \label{thm:v_perp_l}
  Assume  all solutions
  of the extended system~\eqref{eq:mg} have $\mu=0$. Let
  $(a_{ij},\lambda_i,0)$ be an arbitrary solution.

  Then, $\lambda_i$ is parallel and orthogonal to any
  parallel  1-form on $M$. In particular,  if $\lambda_i\ne 0$, then it 
  is isotropic and the signature of $g$ is lorentzian.
\end{lem}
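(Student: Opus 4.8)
My plan is to dispatch the two easy assertions first and reduce the orthogonality claim to a single scalar identity, and then to prove that identity by manufacturing a solution of the extended system with $\mu\neq 0$, which the hypothesis forbids. The parallelism of $\lambda$ is immediate: under the assumption $\mu=0$ the middle equation of~\eqref{eq:mg} reads $\lambda_{i,j}=0$. Consequently, for any parallel $1$-form $\sigma_i$ the scalar $c:=\lambda^i\sigma_i$ is constant, and the entire statement comes down to showing $c=0$ for every parallel $\sigma$.

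The construction I would use is the following. Set $w_i:=a_{ij}\sigma^j$. Using that $\sigma$ is parallel and invoking the first equation of~\eqref{eq:mg}, one computes
\begin{equation}
w_{i,k}=a_{ij,k}\sigma^j=(\lambda_i g_{jk}+\lambda_j g_{ik})\sigma^j=\lambda_i\sigma_k+c\,g_{ik}.
\end{equation}
This is not symmetric in $i,k$, so $w$ is not a gradient and is not yet concircular; the offending term is $\lambda_i\sigma_k$. Since $M$ is simply connected and $\sigma$ is closed (being parallel), there is a global potential $f_\sigma$ with $f_{\sigma,k}=\sigma_k$, and, precisely because $\lambda$ is parallel, $\nabla_k(f_\sigma\lambda_i)=\sigma_k\lambda_i$. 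Hence I would correct $w$ to
\begin{equation}
\hat\lambda_i:=a_{ij}\sigma^j-f_\sigma\lambda_i,\qquad\text{which satisfies}\qquad \hat\lambda_{i,k}=c\,g_{ik}.
\end{equation}

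Now suppose, for contradiction, that $c\neq 0$. Put $\hat\mu:=c$ and $\hat a_{ij}:=\tfrac1c\,\hat\lambda_i\hat\lambda_j$. Differentiating $\hat a$ and using $\hat\lambda_{i,k}=c\,g_{ik}$ gives $\hat a_{ij,k}=\hat\lambda_i g_{jk}+\hat\lambda_j g_{ik}$, while $\hat\lambda_{i,j}=\hat\mu g_{ij}$ and $\hat\mu_{,i}=0$ hold by construction. Since $B=0$, the triple $(\hat a_{ij},\hat\lambda_i,\hat\mu)$ is exactly a solution of~\eqref{eq:mg} (equivalently, by Theorem~\ref{thm:mg+Ba}, of the extended system~\eqref{eq:mg+Ba}) with $\mu=\hat\mu\neq 0$. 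This contradicts the hypothesis that every solution has $\mu=0$; therefore $c=\lambda^i\sigma_i=0$ for every parallel $1$-form $\sigma$.

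The final ``in particular'' assertion follows by specialization: applying the orthogonality to the parallel form $\sigma=\lambda$ itself yields $\lambda^i\lambda_i=0$, so $\lambda$ is isotropic whenever $\lambda\neq 0$; and since a positive-definite metric admits no nonzero isotropic vector, under the standing assumption that $g$ is Riemannian or Lorentzian a nonzero $\lambda$ forces the Lorentzian signature. I expect the crux of the argument to be isolating the correct correction term $f_\sigma\lambda_i$: the naive contraction $a_{ij}\sigma^j$ fails to be concircular because of the antisymmetric cross term $\lambda_{[i}\sigma_{k]}$, and the whole proof hinges on recognizing that this cross term is exactly $\nabla(f_\sigma\lambda)$ and can therefore be subtracted off globally, where simple connectivity is used to produce the potential $f_\sigma$.
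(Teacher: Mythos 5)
Your proof is correct and takes essentially the same approach as the paper: your corrected form $\hat\lambda_i=a_{ij}\sigma^j-f_\sigma\lambda_i$ is exactly the paper's $u_i=a_{ij}v^j-v\lambda_i$, and both arguments finish by building a rank-one solution from it and invoking the hypothesis that every solution has $\mu=0$. The only cosmetic difference is that the paper takes $a'_{ij}=u_iu_j$ directly, obtaining $\mu'=(\lambda_qv^q)^2$ and hence $c^2=0$ with no case distinction, whereas you assume $c\neq 0$ and normalize by $1/c$ to reach a contradiction.
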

\begin{proof}
  
 Since $B=0$ and $\mu=0$, the extended system~\eqref{eq:mg+Ba} reads 
  \begin{gather}
    \label{eq:l_ij=0} 
    \left\{
      \begin{array}{ll} 
        a_{i j,k}&=\lambda_i g_{j k} + \lambda_j g_{i
          k}\\
        \lambda_{i,j}&=0.
      \end{array} \right.
  \end{gather}

  Thus, for every solution $(a_{ij},\lambda_i,0)$ of the extended
  system, $\lambda_i$ is parallel as we claimed. 
  As we explained in  Section~\ref{standard}, $\lambda_i=\lambda_{,i}$ for the function  $\lambda:=\frac{1}{2} \tr_g a.$
 
  Consider an arbitrary parallel 1-form $v_i$ on
  $M$. It is evidently closed; since our manifold is   
  simply-connected,  there exists a function $v$ such that  $v_i= v_{,i}$. We
  take the  1-form  $u_i$ given by 
  \begin{gather}
    \label{eq:sol_u} u_i=a_{ij} v^j - v \lambda_i.
  \end{gather}

We have 
  \begin{gather}
    \label{eq:u_ij}
    u_{i,k}=a_{ij,k} v^j + a_{ij} v^j_{,k} - v_k \lambda_i=\lambda_i
    g_{jk} v^j +\lambda_j g_{ik} v^j - v_k \lambda_i = (\lambda_j v^j)
    g_{ik}
  \end{gather}

  Let us now take $a'_{ij}=u_i u_j$ and show that $a'$ is a solution
  of~\eqref{eq:mg}.
  Indeed,  
  \begin{gather}
    a'_{ij,k}=u_i u_{j,k} + u_{i,k} u_j= u_i \lambda_q
    v^q g_{jk}+ u_j \lambda_q v^q g_{ik}.
  \end{gather}
  Thus, $a'_{ij}$ satisfies the first equation of \eqref{eq:mg} with $\lambda'_i=\lambda_q
  v^q u_i$. In order to  calculate the corresponding  $\mu'$ we use  the second equation
  of~\eqref{eq:mg}:
  $$\lambda'_{i,j}=\lambda_q v^q u_{i,j}\overset{\eqref{eq:u_ij}}
  {=}(\lambda_q v^q)^2 g_{ij}.$$ We see that  $\mu'=(\lambda_q v^q)^2$.
Thus,   for 
  parallel  $v_i$ we have constructed the new
  solution $(a':=u_i u_j,\lambda':=\lambda_q v^q
    u_i,\mu':=(\lambda_q v^q)^2)$ of~\eqref{eq:mg}.  
  By assumption every solution of \eqref{eq:mg}  has  $\mu=0$ implying $\lambda_i$ is orthogonal
  to $v_i$ as we claimed.
\end{proof}

\begin{lem}
\label{thm:mu=0_jordan}
  Let $g$ be a Lorentzian metric such that  $B=0$ and such   that all solutions
  of the extended system~\eqref{eq:mg} have $\mu=0$ and let
  $(a_{ij},\lambda_i,0)$ be an arbitrary solution with $\lambda_i\neq
  0$.

  Then,  there exists a constant $C$ such that (at every point $p\in M$) \ {}  
  $\lambda^i$ is an eigenvector of $a^i_j$ with eigenvalue
  $\lambda+C$.
  Moreover, all other  eigenvalues of $a^i_j$  are constants.
   In a generic point the eigenvalue $\lambda+C$  has algebraic multiplicity $2$, geometric multiplicity $1$,    and corresponds to the $2$-dimensional nontrivial Jordan block of
  $a^i_j$.

  In other words, in a generic point of $M$ 
  the Jordan form of $a^i_j$ looks as follows:
  \begin{gather}
    \label{eq:mu=0_jordan}
    a^i_j=\left(\begin{array}{cc|ccc|c|ccc}
        \lambda+C  &1&&&&&&&\\
        &\lambda+C &&&&&&&\\
        \hline
        &&\rho_2 &&&&&\\
        &&&\ddots &&&&&\\
        &&&&\rho_2 &&&&\\
        \hline
        &&&&&\ddots
        &&\\
        \hline
        &&&&&&\rho_m &&\\
        &&&&&&&\ddots &\\
        &&&&&&&&\rho_m
      \end{array} \right)
  \end{gather}
  where $\lambda=\frac{1}{2}\tr_g a$,  where $\rho_2,\dots , \rho_m$ are 
  constant eigenvalues of multiplicities  $k_2,\dots, k_m$ respectively and  $C:=-\frac{1}{2}\sum_{s\geq 2}^m k_s \rho_s$.
\end{lem}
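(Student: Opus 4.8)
The plan is to read off the Jordan structure of $A:=a^i_j$ directly from the Sinjukov equation \eqref{eq:l_ij=0}, using that, by Lemma~\ref{thm:v_perp_l}, $\lambda_i$ is parallel and isotropic and the signature is Lorentzian. I would set $\Lambda:=\tfrac12\tr_g a$, so that $\lambda_i=\Lambda_{,i}$, and first compute, from $a^i_{j,k}=\lambda^i g_{jk}+\lambda_j\delta^i_k$ together with $\nabla\lambda=0$ and $\lambda_j\lambda^j=0$, that $\nabla_k(a^i_j\lambda^j)=\lambda^i\lambda_k=\nabla_k(\Lambda\lambda^i)$. Hence $w^i:=a^i_j\lambda^j-\Lambda\lambda^i$ is a parallel vector field, which by Lemma~\ref{thm:v_perp_l} is orthogonal to $\lambda$. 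Since the assertion ``$\lambda$ is an eigenvector of $A$ with eigenvalue $\Lambda+C$'' is precisely the statement $w=C\lambda$, the lemma reduces to understanding this parallel field $w$ and the eigenvalues of $A$.

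Next I would differentiate eigenvalues. For a smooth real eigenvalue $x$ of $A$ with eigenvector $\xi$, contracting $(\nabla_kA)\xi=(\nabla_kx)\xi+(x-A)\nabla_k\xi$ with $\xi_i$ and using self-adjointness to kill the $\nabla\xi$-term gives $(\nabla_kx)\langle\xi,\xi\rangle=2\langle\xi,\lambda\rangle\,\xi_k$. Consequently every eigenvalue carried by an eigenvector orthogonal to $\lambda$ --- in Lorentzian signature, by $\langle\lambda,\lambda\rangle=0$, exactly the spacelike eigendirections --- is constant, while a null eigenvector is forced to be orthogonal to $\lambda$, consistently with $\lambda$ itself being the null eigenvector of the exceptional eigenvalue. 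The same computation, summed over powers, yields $d(\tr A^m)=2m\,(A^{m-1}\lambda)^{\flat}$, which localizes all non-constant symmetric functions of the eigenvalues on the $A$-cyclic span of $\lambda$.

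The structural step is to pin down the indefinite part of $A$. Here I would apply the pointwise normal form of a $g$-self-adjoint operator, Theorem~\ref{thm:pairofmatrices}, together with the fact that $g$ has exactly one negative inertia index. Each complex pair needs an $(m,m)$-block and each real Jordan block of size $\ge2$ consumes at least one negative direction through its form $\varepsilon F$, so at most one block of size $\ge2$ can survive, and blocks of size $\ge4$ as well as genuine complex pairs are excluded; the only possibilities are $A$ semisimple with a single timelike eigendirection, a single $2\times2$ block, or a single $3\times3$ block. \emph{The hard part will be to eliminate the first and third of these and to prove $w=C\lambda$.} I expect the semisimple--timelike case to be impossible because a null $\lambda=\sum_sc_s\xi_s$ must then have nonzero components on both the timelike and at least one spacelike eigenvector, producing at least two independent non-constant eigenvalues; I would contradict this either with the rank-one form $d(\tr A^m)=2m\,(A^{m-1}\lambda)^{\flat}$, or, more robustly, by using the standing hypothesis that \emph{every} solution of \eqref{eq:mg} has $\mu=0$ to exclude the extra solutions these eigenvalues would generate. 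The $3\times3$ nilpotent block I would rule out by an argument in the spirit of Sections~\ref{sec:two_blocks} and \ref{sec:kernel}, exploiting the distinguished parallel null field $\lambda$ sitting at maximal height in the block.

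Finally, once only the single $2\times2$ null block remains and $w=C\lambda$ is established, so that $A\lambda=(\Lambda+C)\lambda$ everywhere, I would determine $C$ from the trace: the eigenvalue $\Lambda+C$ occurs with algebraic multiplicity $2$ and the constant eigenvalues $\rho_2,\dots,\rho_m$ with multiplicities $k_2,\dots,k_m$, whence $2\Lambda=\tr A=2(\Lambda+C)+\sum_{s\ge2}k_s\rho_s$ and $C=-\tfrac12\sum_{s\ge2}k_s\rho_s$. The $2\times2$ block can degenerate (raising the geometric multiplicity) only on the nowhere-dense set where $\Lambda+C$ meets some $\rho_s$, which is why the full Jordan normal form \eqref{eq:mu=0_jordan} is claimed only at generic points, while the eigenvector relation $A\lambda=(\Lambda+C)\lambda$ persists everywhere by continuity.
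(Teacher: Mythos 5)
Your opening move coincides with the paper's: the field $w_i:=a_{ij}\lambda^j-\lambda\lambda_i$ is parallel and, by Lemma~\ref{thm:v_perp_l}, orthogonal to $\lambda_i$, and the whole lemma reduces to showing $w=C\lambda$. But that is exactly the step you defer (``the hard part''), and neither of your sketched routes can close it. The paper closes it by applying the construction \eqref{eq:sol_u} a \emph{second} time: since $w$ is parallel and $M$ is simply connected, $w_i=f_{,i}$ for a function $f$, and $U_i:=a_{ij}w^j-f\lambda_i$ is again parallel (here $g(w,\lambda)=0$ is used), hence again orthogonal to $\lambda$ by Lemma~\ref{thm:v_perp_l}; unwinding $0=U_i\lambda^i$ gives $g(Aw,\lambda)=g(w,A\lambda)=g(w,w)=0$, so $\mathrm{span}\{\lambda,w\}$ is totally isotropic, which in Lorentzian signature has dimension at most one, whence $w=C\lambda$ with $C$ constant. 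Your substitutes cannot do this job. The identities $d(\tr A^m)=2m\,(A^{m-1}\lambda)^{\flat}$ are consequences of \eqref{basic} which, by your own gradient formula $\nabla x_s\,\langle\xi_s,\xi_s\rangle=2\langle\xi_s,\lambda\rangle\,\xi_s^{\flat}$, are satisfied term by term by the spectral data of precisely the configurations you want to exclude, so no contradiction can come from them alone; and the hypothesis that all solutions of \eqref{eq:mg} have $\mu=0$ only bites once you exhibit a parallel $1$-form not orthogonal to $\lambda$ (or an explicit solution with $\mu\neq 0$), which your sketch never does. Likewise your plan for the $3\times 3$ block relies on $\lambda$ ``sitting at maximal height'', but nothing places $\lambda$ there --- the conclusion of the lemma in fact puts it at height one --- so that argument has no starting point.

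Two further inaccuracies. First, your pointwise normal-form analysis is wrong about complex eigenvalues: a complex-conjugate pair with $2\times 2$ real block carries the form $F_2$ of signature $(1,1)$, i.e.\ exactly one negative direction, so Theorem~\ref{thm:pairofmatrices} plus Lorentzian signature does \emph{not} exclude it. In the paper complex eigenvalues disappear only a posteriori: once $\lambda$ is known to be a null eigenvector with real eigenvalue, its (nondegenerate) generalized eigenspace must absorb the timelike direction, leaving a positive-definite, hence real semisimple, complement. Second, for geometric multiplicity one the paper invokes the Splitting Lemma of \cite{Bols} together with \cite[Lemma 4]{KM}: if the restriction of $a$ to the two-dimensional block were diagonal it would equal $(\lambda+C)h$, forcing $\lambda+C$ to be constant, a contradiction. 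To your credit, your differentiation formula could replace that particular step at generic points (in a two-dimensional Lorentzian eigenspace containing $\lambda$, two orthogonal non-null eigenvector fields force $\nabla(\lambda+C)=\lambda^{\flat}=0$), and it does correctly give constancy of the spacelike eigenvalues; your trace computation of $C$ also agrees with the paper's. But with the central step $w=C\lambda$ unproved and the elimination of the remaining Jordan types resting on arguments that do not work, the proposal is not a proof.
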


\begin{proof}
  In order to show that $\lambda_i$ is the eigenvector of $a_{ij}$, 
  we   construct $u_i$ as
  in~\eqref{eq:sol_u}  with $\lambda_i$ playing the role of $v_i$: 
  $$u_i=a_{ij} \lambda^j - \lambda \lambda_i.$$

  Then, $u_{i,k}=(\lambda_j\lambda^j) g_{ik}=0$ and $u_i$ is a
  parallel  1-form on $M$. By  Lemma \ref{thm:v_perp_l}, it is orthogonal to
  $\lambda_i$, so we have  $$0=u_i\lambda^i=(a_{ij}\lambda^j-\lambda
 \lambda_i)\lambda^i=a_{ij}\lambda^j \lambda^i.$$
  
Then, there exists a function $u$ such that $u_{, i}  =u_i$.  
 Next,  define $U_i=a_{ij} u^j - u \lambda_i$
 (similar to~\eqref{eq:sol_u}). By direct calculations we  see  $U_{i,k}=(\lambda_j u^j) g_{ik}=0.$
 Thus, $U_i$ is parallel and in view of Lemma \ref{thm:mu_neq_0} orthogonal to
 $\lambda_i$. Hence,
 $$0=U_i\lambda^i=(a_{ij}V^j-V \lambda_i)\lambda^i=a_{ij}V^j
 \lambda^i=a_{ij} u^j \lambda^i=a_{ij}(a^{j}_l \lambda^l-\lambda
 \lambda^j) \lambda^i=a_{ij} a^j_l \lambda^l \lambda^i.$$

 At every point $P$, consider  $S=\textrm{span} \{\lambda_i,u_i\} \subset T_P^*M$. We have 
 $ \lambda_i
 u^i=\lambda_i
 \lambda^i  =0 $. Moreover, since $a_{ij}\lambda^i \lambda^j=0$ and
 $a_{ij}a^j_l \lambda^l\lambda^i=0$, we also have  $u_i u^i=0$. Indeed, 
 $$u_i u^i=(a_{ij} \lambda^j - \lambda \lambda_i)(a^i_l \lambda^l
 - \lambda \lambda^i)=  a_{ij}\lambda^j a^i_l \lambda^l=0.$$
  
 Therefore, $S$ is a totally isotropic subspace. Since $g$
 is Lorentzian, the dimension of $S$ is at most $1$. Thus, the 1-forms  $u_i$ and $\lambda_i$ are linearly dependent everywhere on
 $M$. Since they are   parallel and $\lambda_i\ne 0$, there exists
 a constant $C$ such that $u_i=C \lambda_i$. Then,
 $$a^i_j \lambda^j=u^i + \lambda \lambda^i= (C+\lambda) \lambda^i, $$
 i.e.   $\lambda^i$ is an eigenvector of 
 $a^i_j$ whose  eigenvalue  is $(\lambda+C)$ as we claimed.

 Next  we calculate the algebraic  multiplicity of the eigenvalue.
 We assume that we work at a  point  such that the multiplicities of
 the eigenvalues of $a^i_j$ are the same in a small neighborhood; almost every point has this property. 
 Near such points, the eigenvalues $\rho_1=\lambda+C,\rho_2,\dots,\rho_m$ are
 well-defined smooth functions.

By  Splitting Lemma (\cite[Theorem 3]{Bols}; actually at this point we need only \cite[Theorem 1]{Bols}), there exists a local  coordinate system
 $(x^{(1)}_1,\dots,x^{(1)}_{k_1},\dots,x^{(m)}_1,\dots,x^{(m)}_{k_m})$,
 such that each eigenvalue $\rho_i$ depends only on the coordinates
 $(x^{(i)}_1,\dots,x^{(i)}_{k_i})$. Clearly, 
 \begin{multline} \label{-12} 
   \tr a^i_j =2 \lambda(x^{(1)}_1,\dots,x^{(1)}_{k_1})
   =\\ =k_1(\lambda(x^{(1)}_1,\dots,x^{(1)}_{k_1})+C)+k_2
   \rho_2(x^{(2)}_1,\dots,x^{(2)}_{k_2})+\dots +k_m
   \rho_m(x^{(m)}_1,\dots,x^{(m)}_{k_m}).
 \end{multline}
  Differentiating    \eqref{-12}   with respect to  $x_i=x^{(1)}_i$   we
 obtain $2 \lambda_i=k_1 \lambda_i$. 
 Since $\lambda_i\neq 0$ we have $k_1=2$  as we claimed.
 Differentiating \eqref{-12}   with respect to  $x^{(s)}_j$ with $s>1$ we
 obtain $\frac{\partial}{\partial x^{(s)}_j} \rho_s=0.$
 Thus, all $\rho_s$ for $s\geq 2$ are constants  as we claimed.

 Our  next goal is to show that the $2$-dimensional Jordan block corresponding to
 the eigenvalue $\lambda+C$ is nontrivial (i.e., is not proportional to $\Id$).

 By Splitting Lemma, the distribution of the
 generalized eigenspaces is integrable. Thus, locally there exists
 a 2-dimensional submanifold $N$ in $M$ whose  tangent space is 
 $TN=\ker \left((a^i_j-(\lambda+C)\Id)^2\right)$.

 Moreover, by Splitting Lemma,  there exists a metric $h$ to $N$,
 such that the restriction of $a^i_j$ to  $N$ is a solution of the
  equations~\eqref{basic} for the metric $h$ on
 $N$, and such that  its only eigenvalue is $(\lambda+C)$.
If the restriction  of $a^i_j$ to $TN$ is diagonal, it must be  $a^i_j=(\lambda+C) \delta^i_j$, so the 
  tensor field $a_{ij}=(\lambda+C) h_{ij}$. By \cite[Lemma 4]{KM}, $\lambda+C$ is constant on $N$ implying it is constant on $M$ which contradicts the assumptions.  

 Then, the restriction of $a^i_j$ to its generalized eigenspace $TN$
 is not diagonal, therefore, is similar to the
 nontrivial $2$-dimensional Jordan block.
  
 Since $g$ has the  lorentzian signature,  a selfadjoint  endomorphism 
 does not admit more than one nontrivial Jordan block by Theorem \ref{thm:pairofmatrices}. Therefore,
 $a^i_j$ has the Jordan form~\eqref{eq:mu=0_jordan}.
\end{proof}

Next we consider the metric admitting the solution $a^i_j$ whose Jordan form (at almost every point) is~\eqref{eq:mu=0_jordan} and show that if $(M,g)$ is
indecomposable and does not admit solutions with $\mu\neq 0$, then
$a^i_j$ has at least $2$ different constant eigenvalues and all
constant eigenvalues of $a^i_j$ have  multiplicities at least $2$.

\begin{lem}
  \label{thm:rho_1,rho_2} 
  Suppose that almost everywhere on $M$ the  tensor field $a^i_j$ has the 
  Jordan form~\eqref{eq:mu=0_jordan}. Assume $(a_{ij}, \lambda_i)$ is a solution of \eqref{basic} such that $\lambda_i$ is parallel and isotropic. 
  Then,  the following statements hold: 
  \begin{enumerate}
  \item If $m=2$, i.e. $a^i_j$ has only one constant eigenvalue, there
    exists a  1-form   $v_i$ satisfying $v_{i,j}= g_{ij}$. 
  \item If, for a certain $s>1$,  the eigenvalue $\rho_s$ has multiplicity
    $k_s=1$, then there exists a  parallel 
    1-form  on $M$ that is  linearly independent of  $\lambda_i$.
  \end{enumerate}
\end{lem}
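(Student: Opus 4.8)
The plan is to work on the open dense set $U$ where $a^i_j$ has the generic Jordan form \eqref{eq:mu=0_jordan}, to build the required $1$-forms out of the eigenstructure of $a$ together with the parallel isotropic field $\lambda_i$, and then to push them to all of $M$ using that $M$ is simply connected. Throughout I write $\sigma:=\lambda+C$ for the (nonconstant) eigenvalue attached to the $2$-dimensional Jordan block, and $\nu_s:=\sigma-\rho_s$; on $U$ the eigenvalues are distinct, so $\nu_s\neq 0$. The two inputs are: the differential relations \eqref{eq:l_ij=0}, i.e. $a_{ij,k}=\lambda_ig_{jk}+\lambda_jg_{ik}$ and $\lambda_{i,j}=0$; and the algebraic fact that eigenvectors of the $g$-selfadjoint $a$ for distinct eigenvalues are $g$-orthogonal, so every eigenvector of a constant eigenvalue is orthogonal to $\lambda^i$ (and, since the $\sigma$-block is the Lorentzian $(1,1)$-part, such eigenvectors are spacelike).

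I would carry out (2) first, as the clean model. Fix the constant eigenvalue $\rho_s$ of multiplicity one and a smooth unit section $w^i$ of its (one-dimensional) eigenspace, $g(w,w)=\pm1$. Differentiating $a^i_jw^j=\rho_sw^i$ and using $a^i_{j,k}=\lambda^ig_{jk}+\lambda_j\delta^i_k$ together with $g(\lambda,w)=0$ gives $(a^i_j-\rho_s\delta^i_j)\nabla_kw^j=-w_k\lambda^i$. Since $\lambda^i=\tfrac1{\nu_s}(a^i_j-\rho_s\delta^i_j)\lambda^j$ and $\ker(a-\rho_s\,\mathrm{Id})=\mathbb{R}w$, the unit normalization of $w$ removes the residual $\mathbb{R}w$-ambiguity and forces
\begin{equation*}
  \nabla_kw_i=-\tfrac1{\nu_s}\,w_k\,\lambda_i .
\end{equation*}
A one-line computation (using $\nu_{s,k}=\lambda_k$) then shows that $\nabla_k\!\big(\tfrac1{\nu_s}w_i\big)$ is symmetric in $i,k$, so $\tfrac1{\nu_s}w_i$ is closed; as $M$ is simply connected there is a function $\beta$ with $\beta_{,k}=\tfrac1{\nu_s}w_k$, and then $\omega_i:=w_i+\beta\lambda_i$ satisfies $\omega_{i,k}=0$. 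Since $w$ is a nonnull eigenvector and $\lambda$ is null, $\omega_i$ is not proportional to $\lambda_i$, which is the assertion of (2).

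For (1) I note first that on the simply connected $M$ a $1$-form $v_i$ with $v_{i,j}=g_{ij}$ is automatically closed, hence $v_i=v_{,i}$, and by the Remark following Lemma \ref{thm:hom} such a $v$ (after adding a constant) solves \eqref{eq:hom}; thus (1) is precisely the statement that $g$ is locally a cone metric. I would construct $v$ from the Jordan chain of $a$: on $U$ take the generalized eigenvector $e^i$ determined by $(a^i_j-\sigma\delta^i_j)e^j=\lambda^i$, normalized by $g(\lambda,e)=1$, and look for $v_i$ of the form $v_i=\sigma e_i+\psi\lambda_i$ with a function $\psi$ to be fixed by an integration step as in (2). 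Differentiating the defining relation for $e$ (using $\sigma_{,k}=\lambda_k$, the isotropy of $\lambda$, and $g(\lambda,e)=1$) determines $\nabla_ke$ modulo $\mathbb{R}\lambda^i$, and substituting into $\nabla_kv_i$ produces $g_{ik}$ together with correction terms supported on the constant eigenspaces and weighted by the factors $1/\nu_s$. The hypothesis $m=2$ — a single constant eigenvalue — is exactly what lets these corrections, after the correct choice of $\psi$, assemble into the clean identity $v_{i,j}=g_{ij}$.

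The main obstacle I expect is precisely this verification in (1): the generalized eigenvector $e$ is defined only up to $\mathbb{R}\lambda^i$ and only on $U$, its covariant derivative is obtained by inverting the degenerate operator $a-\sigma\,\mathrm{Id}$ on its image, and one must check that the defect $\nabla_k(\sigma e_i)-g_{ik}$ is of the form $\psi_{,k}\lambda_i$ with a \emph{closed} coefficient, so that the integration step closes up globally. For both parts the final passage from $U$ to $M$ is routine: the constructed forms are annihilated by the curvature on the dense set $U$ (for (2) because $\omega$ is parallel; for (1) because $v_{i,j}=g_{ij}$ forces the integrability condition $v_\ell R^\ell{}_{ijk}=0$, cf. Lemma \ref{thm:cone-not-parallel}), and hence, $M$ being simply connected, they extend to globally defined $1$-forms of the same type.
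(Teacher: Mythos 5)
Your part (2) is correct and is, up to normalization, the paper's own argument: your identity $\nabla_k w_i=-\tfrac{1}{\nu_s}w_k\lambda_i$ for the unit eigenvector field and the resulting parallel form $w_i+\beta\lambda_i$ coincide with the paper's $u_i$ from \eqref{eq:constu} (your sign is in fact the correct one; \eqref{eq:constu} as printed has a sign slip, as the derivative formula used right after it shows). The genuine gap is part (1). You flag it yourself: the verification that $\nabla_k(\sigma e_i)-g_{ik}$ has the form $\psi_{,k}\lambda_i$ with a closed coefficient is announced as ``the main obstacle'' and is never carried out, so case (1) is simply not proved. Moreover, the ansatz as written cannot be carried out literally: the normalization $g(\lambda,e)=1$ is not at your disposal, because $(a^i_j-\sigma\delta^i_j)e^j=\lambda^i$ already determines $g(\lambda,e)$ (replacing $e$ by $e+t\lambda^\sharp$ does not change it, since $\lambda$ is isotropic and $(a-\sigma\Id)\lambda^\sharp=0$); and a check in the Jordan basis shows that the coefficient of $e_i$ must be $\lambda+C-\rho_2$ rather than $\sigma=\lambda+C$, so one must first replace $a$ by $a-\rho_2 g$. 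The paper's proof of (1) bypasses all of this and is essentially one line: when $m=2$, the tensor $a_{ij}-\rho_2 g_{ij}$ has rank two with $\lambda_i$ in its image and vanishes on the hyperplane $\lambda=0$, hence can be written (uniquely, smoothly, everywhere) as $v_i\lambda_j+v_j\lambda_i$; covariantly differentiating this identity and using \eqref{basic} together with $\lambda_{i,j}=0$ gives $\lambda_i(g_{jk}-v_{j,k})+\lambda_j(g_{ik}-v_{i,k})=0$, which forces $v_{i,k}=g_{ik}$ at once --- no generalized eigenvector, no integration step, no closedness argument.

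A secondary defect is your globalization step. A parallel (or closed) form constructed only on the dense open set $U$ does not extend to $M$ merely because it is annihilated by the curvature there and $M$ is simply connected; moreover your $\beta$ is obtained by integrating a closed form over $U$, and $U$ itself need not be simply connected (or even connected), so the construction is not complete even on $U$. The paper resolves this by defining a global smooth $2$-plane distribution $W$ through algebraic conditions on $a$ (with separate formulas near the points where $\lambda+C$ collides with some $\rho_s$), observing that $W$ is parallel and flat because almost everywhere it is spanned by the parallel forms $\lambda_i$ and $u_i$, and only then invoking simple connectedness of $M$ to produce a global parallel section independent of $\lambda_i$. Some argument of this kind is indispensable to finish both parts of the lemma.
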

\begin{proof} 
  We first  describe  $g$ in a
  neighborhood of almost every point in $M$.
  We  denote the characteristic polynomial of $a^i_j$ by
  $\chi(t)$ and consider its decomposition into coprime components
  $\chi_s(t)=(t-\rho_s)^{k_s}$:
  $$\chi(t)=\underbrace{(t-\lambda-C)^2}_{\chi_1(t)}\cdot 
  \underbrace{(t-\rho_2)^{k_2}}_{\chi_2(t)}\cdot\dotsb\cdot
  \underbrace{(t-\rho_m)^{k_m}}_{\chi_m(t)}.$$

  This decomposition is ``admissible'' in the terminology  of  Splitting
  Lemma. Therefore, there exists a coordinate system
  $(x_1,x_2,x^{(2)}_1,\dots,x^{(2)}_{k_2},\dots,x^{(m)}_1,
  \dots,x^{(m)}_{k_m})$ on $M$ such that $a^i_j$ and $g_{ij}$ have the
  following block-diagonal form:

  \begin{gather}
    a^i_j=\left(\begin{array}{cc|ccc|c|ccc}
        \lambda(x_1,x_2)+C  &f(x_1,x_2)&&&&&&&\\
        &\lambda(x_1,x_2)+C &&&&&&&\\
        \hline
        && &&&&&\\
        &&&\rho_2 \Id_{k_2} &&&&&\\
        &&&& &&&&\\
        \hline &&&&&\ddots
        &&\\
        \hline
        &&&&&& &&\\
        &&&&&&& \rho_m \Id_{k_m} &\\
        &&&&&&&&
      \end{array} \right),\\
    g_{ij}=\left(
      \begin{array}{cc|c|c|c}
        \multicolumn{2}{c|}{ \multirow{2}{*}{$h_1(x_1,x_2)\cdot
            \hat\chi_1(A_1)$}} &&&\\
        &&&\\
        \hline
        && h_2(x^{(2)})\cdot\hat\chi_2(A_2) &&\\
        \hline
        &&&\ddots&\\
        \hline
        &&&& h_m(x^{(m)})\cdot \hat\chi_m(A_m) 
      \end{array}\right)
  \end{gather}
  where $\Id_k$ is an $k$-dimensional identity endomorphism, $A_s$ are  $k_s\times k_s$ matrices
given by 
  $$A_1=\left(\begin{array}{cc}
      \lambda(x_1,x_2)+C  &f(x_1,x_2)\\
      &\lambda(x_1,x_2)+C
    \end{array}
  \right)\quad\mbox{and}\quad A_s=\rho_s\Id_{k_s},$$  $h_s$ are  nondegenerate  symmetric     matrices such that the entries of $h_s$   depend  only on the coordinates
  $x^{(s)}_1,\dots,x^{(s)}_{k_s}$,  and such that $h_s$ is positively definite  for $s\ge 2$,  and $\hat\chi_s(t):=\frac{\chi(t)}{\chi_s(t)}$ (it is a polynomial of degree $n-k_s$).
    
  Note that, since for all $s\geq 2$ the eigenvalues $\rho_s$ are
  constant, $\hat\chi_1(t)$ has constant coefficients. Thus,
  $\hat\chi_1(A_1)$ depends on the variables $(x_1,x_2)$ only. 
  Since $A_s=\rho_s \Id_{k_s}$,
  $$\hat\chi_s(A_s)=\hat\chi_s(\rho_s \Id_{k_s})
  =\hat\chi_s(\rho_s)\Id_{k_s}=\const \cdot (\rho_s-\lambda-C)^2 \Id_{k_s}. $$

  Therefore we can rewrite metric $g$ in the following form:
  \begin{gather}
    \label{eq:block-diagG}
    g_{ij}=\left(
      \begin{array}{cc|c|c|c}
        \multicolumn{2}{c|}{ \multirow{2}{*}{$g_1(x_1,x_2)$}} &&&\\
        &&&\\
        \hline
        && (\lambda(x_1,x_2)+C-\rho_2)^2 g_2(x^{(2)}) &&\\
        \hline
        &&&\ddots&\\
        \hline
        &&&& (\lambda(x_1,x_2)+C-\rho_m)^2 g_m(x^{(m)})
      \end{array}\right)
  \end{gather}
  where $g_s$ for $s\ge 2$ are certain  positively  defined symmetric $k_s\times k_s$-matrices  depending only on the coordinates
  $x^{(s)}_1,\dots,x^{(s)}_{k_s}$.

Let us now prove the Lemma under the assumption of   Case $(1)$: we assume  $m=2$ so  $a^i_j$ has only one constant eigenvalue   $\rho_2$ of multiplicity $k_2$.
Instead  we consider $a_{ij} - C g_{ij}$; the pair $(a_{ij} - C g_{ij}, \lambda_i)$ is clearly a solution of \eqref{basic}.
 Clearly, 
  $\lambda^i$ is the eigenvector of $a^i_{j} - C \delta^i_{j}$ with eigenvalue $\lambda$ and all other eigenvalues of $L_j^i$ are zero. Then,  the matrix of 
     $a^i_{j} - C \delta^i_{j}$ in our coordinate system is  given by 
  \begin{gather}
   a^i_{j} - C \delta^i_{j}=\left(\begin{array}{cc|ccc}
        \lambda(x_1,x_2)  &f(x_1,x_2)&&\\
        &\lambda(x_1,x_2) &&&\\
        \hline
        && &&\\
        &&& \textrm{\Large 0}  &\\
        &&&&\\
        \end{array} \right).
  \end{gather}

  We consider  the  (unique) 1-form  $v_i$  such that 
  \begin{equation}\label{ma}  
a_{ij} - C g_{ij}= v_i \lambda_j +  v_j \lambda_i. 
  \end{equation}
  Such 1-form   exists at almost every point   since  rank of $a_{ij} - C g_{ij}$ is two and since 
   $\lambda_i\ne 0$ in the image of $L^{i}_j$. 
 In order to show the existence everywhere, we observe that 
  \eqref{ma} is a system of linear equations on the components of $v_i$ whose coefficients (i.e. the components of $\lambda_i$ and of $L_{ij}$) smoothly depend on the positions. Then, 
  the existence of a solution almost everywhere implies the existence of a solution everywhere. The uniqueness  of the solution 
  follows from $\lambda_i\ne 0$ (which is fulfilled everywhere since $\lambda_i$ is parallel) 
  and implies that   $v_i$ is smooth. 
  
  Covariantly differentiating \eqref{ma} and using \eqref{basic}, 
  we obtain 
 $$
\lambda_i g_{jk} + \lambda_j g_{ik} =    \lambda_jv_{i,k} + \lambda_i v_{j,k}  
 $$ 
  implying $\lambda_i (g_{jk} - v_{j,k}) +  \lambda_j (g_{ik} - v_{i,k})=0$ implying $g_{ik}= v_{i,k}$ as we want. 
   Lemma is proved under the  assumptions of Case (1). 

In order to prove the Lemma under the assumptions of    Case $(2)$, we  suppose that $a^i_j$ (in a generic point)  has a 
constant eigenvalue of
  multiplicity $1$. We renumerate the eigenvalues such that the last
  eigenvalue $\rho_m$ has multiplicity $k_m=1$.

  Then, the last component of the metric $g$ in~\eqref{eq:block-diagG}
  is one-dimensional, and one can choose (locally, in a neighborhood of a generic point) a coordinate
  $w=x_n$  such that the corresponding 1-form
  $w_k=w_{,k}$ satisfies the following conditions:
  \begin{eqnarray}
    \label{eq:a*w}
    a^i_jw^j&=&\rho_m w^j\\
    w_i w^i&=&\frac{1}{(\lambda+C-\rho_m)^2}
  \end{eqnarray}
  We consider  the following 1-form 
  \begin{gather}
    \label{eq:constu}
    u_i=(\lambda+C-\rho_m)w_i- w\lambda_i
  \end{gather}
  and show that it is parallel.
  First we describe  how  the (1,1)-tensor $w^j_{\ ,k}$ viewed as an endomorphism  acts on the basis vectors of the
  tangent space $T_p M$.   Note that $\lambda_i w^i=0$, since both vectors are eigenvectors of
  $a^i_j$ with different eigenvalues.
  We covarinatly  differentiate~\eqref{eq:a*w}  and
  substitute $a_{ij,k}=\lambda_i g_{jk}+\lambda_j g_{ik}$ to obtain:
  \begin{gather}
    \label{eq:a*w_k}
    \lambda_i w_k+a_{ij}w^j_{\ ,k}=\rho_m w_{i ,k}.
  \end{gather}
  
  Now we contract the equation~\eqref{eq:a*w_k} with an arbitrary
  eigenvector $\tau^i$, such that $a^i_j\tau^j=T \tau^j$. We obtain 
  $(\lambda_i\tau^i) w_k+T \tau_j w^j_{\ ,k}=\rho_m \tau^i w_{i,k}. $
  Thus, $(T-\rho_m) w^j_{\ ,k}\tau_j=\lambda_i \tau^i w_k.$

  For all basis eigenvectors $\tau^i$ with eigenvalues
  $\rho_1=\lambda+C, \rho_2,\dots \rho_{m-1}$ we have $T\neq \rho_m$
  and $\lambda_i\tau^i=0$. Therefore, for every such vector  we have $w^j_{\
    ,k}\tau_j=0$.

  Let us put $\tau_i$ equal to the last eigenvector $w_i$ and
  calculate $w^j_{\ ,k} w_j$:
  $$w^j_{\ ,k} w_j=\frac{1}{2}(w_j w^j)_{,k}=\partial_k
  \left(\frac{1}{2(\lambda+C -\rho_m)^2}\right)=-\frac{1}{(\lambda+ C-\rho_m)^3}
  \lambda_k.$$

  As  the remaining basis vector of $T_p
  M$ we take  $v^i$  such that  $a^i_j
  v^j=(\lambda+C) v^i+\lambda^i$.

  Then, contracting \eqref{eq:a*w_k} with $v^i$ we have:
  $  (\lambda_i v^i) w_k+((\lambda+C) v_j+\lambda_j)w^j_{\ ,k}=\rho_m w_{i ,k} v^i.$
  Thus,
  $w^j_{\ ,k} v_j=-\frac{\lambda_i v^i}{(\lambda+C-\rho_m)}w_k.$
  
  We have constructed the basis of $T_p M$ whose first  $(n-1)$ vectors are 
  eigenvectors of $a^i_j$ and the last vector  is the vector  $v_i$, and
  calculated entries of endomorphism  $w^j_{\ ,k}$ in this basis. We substitute it in   the derivative of ~\eqref{eq:constu} which is 
  $u_{i,j}=(\lambda+C-\rho_m)w_{i,j}+\lambda_j w_i + w_j \lambda_i.$ 
In order to show $u_{i,j}= 0$ it is sufficient to show that we obtain zero 1-form if we  contract $u_{i,j}$ with all vectors of our basis.   

 For any eigenvector $\tau^i$ of $a^i_j$ corresponding to the
  eigenvalues $\rho_1=\lambda+C,\rho_2,\dots, \rho_{m-1},$ we have
  $u_{i,j}\tau^j=(\lambda+C-\rho_m)w_{i,j}\tau^j+\lambda_j\tau^j w_i
  + w_j\tau^j \lambda_i=0, $
  because  $\tau_i$ is orthogonal to both $\lambda_i$ and $w_i$. 
  Moreover,    $u_{i,j}w^j=(\lambda+C-\rho_m)w_{i,j}w^j+  \lambda_j w^j w_i + w_j \lambda_i w^j=0.$
  For the remaining  basis vector $v^i$ we have 
  $u_{i,j}v^j=(\lambda+C-\rho_m)w_{i,j}v^j+\lambda_j v^j w_i +
  w_j \lambda_i v^j=0.$
  Therefore, $u_{i,j}=0$ and $u_i$ is a  (nonzero) parallel 1-form  linearly independent of  $\lambda_i$, whose existence we claimed.

We constructed the 1-form $u_i$  at generic point only. 
 In order  to extend $u_i$ to the whole manifold $M$, we consider the
  distribution $W$ on $M$ defined as follows:
  
  In a neighborhood of a  point $P$ 
  such that $\lambda(P)+C\ne \rho_s$ for $s=2,...,m$   we put
  $$W=\ker \left((a-(\lambda+C)\Id)(a-\rho_m \Id)\right)=span\{u_i,\lambda_i\}.$$

  In a neighborhood of  a  point $P$ 
  such that $\lambda(P)+C= \rho_s$ for  some $s=2,...,m-1$, \    we put
  $$ W= \{\const\cdot \lambda^i\}\oplus \ker (a-\rho_m \Id). $$
  
  And in a neighborhood of a  point $P$ such that 
  $\lambda(P)+C=\rho_m$ we put 
  $$W=\ker (a-(\lambda+C)\Id)(a-\rho_m \Id)\cap
  \lambda_i^\perp.$$

  It is easy to see that $W$ is a well-defined smooth $2$-dimensional
  distribution on $M$; moreover,  almost everywhere it  coincides with a
  linear span of two parallel   vector fields $\lambda^i$
  and $u^i$. Thus, it is parallel and flat almost everywhere and,
  therefore, everywhere on $M$. Then, there exist a  globally defined parallel  1-form  on $M$ that is 
  linearly independent of $\lambda_i$.
\end{proof}

\begin{cor}
  \label{thm:lorentz_dim}
  Let $(M,g)$ be an indecomposable Lorentzian manifold with $D(g)\geq
  3$, such that $B=0$ and all solutions of the extended
  system~\eqref{eq:mg} have $\mu=0$.
  Suppose there exists at least one solution $(a_{ij},\lambda_i,0)$
  of~\eqref{eq:mg+Ba} such that $\lambda_i\neq 0$.
  Then, the dimension of $M$ is at least $6$.
\end{cor}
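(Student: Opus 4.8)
The plan is to combine the structural information from Lemmas~\ref{thm:v_perp_l}, \ref{thm:mu=0_jordan}, and \ref{thm:rho_1,rho_2} with the indecomposability hypothesis to pin down the integers $m$ and $k_s$ appearing in the Jordan normal form~\eqref{eq:mu=0_jordan}. By Lemma~\ref{thm:v_perp_l} the $1$-form $\lambda_i$ is parallel and isotropic, and by Lemma~\ref{thm:mu=0_jordan} the endomorphism $a^i_j$ has, at a generic point, one nontrivial $2\times 2$ Jordan block with the nonconstant eigenvalue $\lambda+C$, together with diagonal blocks $\rho_s\Id_{k_s}$, $s=2,\dots,m$, whose eigenvalues $\rho_s$ are pairwise distinct constants. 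Hence $\dim M = 2 + k_2 + \dots + k_m$, so the whole task reduces to showing $m\ge 3$ and $k_s\ge 2$ for every $s$, since these give $\dim M \ge 2 + 2(m-1)\ge 6$.

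First I would rule out $m=2$. If $m=2$, Lemma~\ref{thm:rho_1,rho_2}(1) produces a $1$-form $v_i$ with $v_{i,j}=g_{ij}$. I would then check that the triple $(a_{ij}:=v_iv_j,\ \lambda_i:=v_i,\ \mu:=1)$ solves the extended system~\eqref{eq:mg+Ba} with $B=0$: indeed $(v_iv_j)_{,k}=v_{i,k}v_j+v_iv_{j,k}=v_ig_{jk}+v_jg_{ik}$, while $\lambda_{i,j}=v_{i,j}=g_{ij}=\mu g_{ij}$ and $\mu_{,i}=0$. This is a solution with $\mu\equiv 1\ne 0$, contradicting the hypothesis that every solution has $\mu=0$. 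Thus $m\ge 3$; note that this step does not use indecomposability at all.

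The main work, and the step I expect to be the real obstacle, is showing that no constant eigenvalue can have multiplicity one. Assume $k_s=1$ for some $s\ge 2$. Then Lemma~\ref{thm:rho_1,rho_2}(2) supplies a parallel $1$-form $u_i$ linearly independent of $\lambda_i$; raising indices yields two linearly independent parallel vector fields $\lambda^i$ and $u^i$, so the space $W$ of parallel vectors satisfies $\dim W\ge 2$. Here I would invoke indecomposability: any parallel vector $w$ with $g(w,w)\ne 0$ would make $\langle w\rangle$ a proper nondegenerate holonomy-invariant subspace, and then $T_pM=\langle w\rangle\oplus w^{\perp}$ is an orthogonal decomposition into nondegenerate invariant pieces, contradicting indecomposability. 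Hence every element of $W$ is isotropic, and by polarization $W$ is totally isotropic. But in Lorentzian signature $(1,n-1)$ a totally isotropic subspace has dimension at most $1$, contradicting $\dim W\ge 2$. Therefore $k_s\ge 2$ for all $s$, and the count gives $\dim M \ge 6$.

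The delicate points to get right are the genuine linear independence of the parallel vectors $\lambda^i$ and $u^i$ (so that $\dim W\ge 2$ really holds), and the signature bookkeeping showing that a two-dimensional space of parallel vectors is impossible once one knows every parallel vector must be null. Both rest squarely on the Lorentzian hypothesis: the bound $\dim W\le 1$ on totally isotropic subspaces is exactly what fails in signature $(n-1,2)$, which is consistent with the fact that this ``special case'' is the one where the Lorentzian and higher-corank arguments genuinely diverge.
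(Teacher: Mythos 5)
Your proof is correct and follows essentially the same route as the paper: rule out $m=2$ by feeding the $1$-form from Lemma~\ref{thm:rho_1,rho_2}(1) into a solution $(v_iv_j,v_i,1)$ with $\mu\ne 0$, and rule out $k_s=1$ by combining Lemma~\ref{thm:rho_1,rho_2}(2) with the fact that an indecomposable Lorentzian manifold cannot carry a parallel $1$-form independent of $\lambda_i$. The only difference is presentational: the paper asserts that last fact in one sentence, while you spell out its justification (indecomposability forces every parallel vector to be null, polarization makes the space of parallel vectors totally isotropic, and Lorentzian signature caps its dimension at one), which is exactly the intended argument.
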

\begin{proof}
  By Lemma~\ref{thm:mu=0_jordan}, in a certain basis   the matrix of $a^i_j$ has the form~\eqref{eq:mu=0_jordan}.
Since $M$ does not admit solutions of \eqref{eq:mg+Ba} such that $\mu\ne 0$,  there exists no $v_i$ such that $v_{i,j}=0$. Indeed, for such $v_i$ the triple $(a'_{ij}= v_iv_j, \lambda'_i= v_i, \mu'_i= 1)$ is a solution of \eqref{eq:mg}.  Then, by Lemma~\ref{thm:rho_1,rho_2},
$m\ge 3.$ 
  Since $M$ is Lorentzian and indecomposable, it does not admit 
  parallel  1-forms that are not   constant multiples of  $\lambda_i$. Then, by Lemma~\ref{thm:rho_1,rho_2}, $k\ge 3$. 
  Thus,   $\dim M\geq 2+2+2=6$ as we claimed.
\end{proof}

\begin{cor}  \label{cor3} Assume $g$ (on a connected simply connected manifold)
 has lorentzian signature, $D(g)=3$, $B=0$ and every solution $(a, \lambda, \mu)$  of the extended system~\eqref{eq:mg} has  $\mu=0$. Then, every homothety vector field of $g$ is an isometry. 
\end{cor}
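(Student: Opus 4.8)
The plan is to argue by contradiction: suppose $g$ admits a homothety $v$ with $\Lie_v g = 2c\,g$ for a constant $c$, and show that $c=0$, so that $v$ is a Killing field. Since $c$ is constant, $v$ preserves the Levi-Civita connection of $g$ (it is an \emph{affine} vector field), and hence its flow sends solutions of the metrizability equation~\eqref{basic} to solutions and parallel tensors to parallel tensors. Thus $\Lie_v$ acts linearly on the three-dimensional space $Sol(g)$, with $g$ an eigenvector of eigenvalue $2c$, and being Killing is equivalent to $c=0$. To exploit this I first record the structure of $Sol(g)$ under the present hypotheses.

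By Lemma~\ref{thm:v_perp_l} every $\lambda$ occurring in a solution is parallel, isotropic, and orthogonal to all parallel $1$-forms. Two linearly independent such $\lambda$'s would therefore span a $2$-dimensional totally isotropic subspace, impossible in Lorentzian signature; so the parallel $1$-forms arising from solutions form a single line $\R\lambda$ with $\lambda$ parallel and isotropic. Consequently the parallel solutions (those with $\lambda=0$) are exactly $Sol_0=\langle g,\ \lambda\otimes\lambda\rangle$ (note that $\lambda\otimes\lambda$ is parallel and symmetric, with vanishing associated $1$-form since $|\lambda|^2=0$), and, since $D(g)=3$, we get $Sol(g)=\langle g,\ \lambda\otimes\lambda,\ a\rangle$ for a single essential solution $a$ whose associated $1$-form is $\lambda$.

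The heart of the argument is that a \emph{proper} homothety ($c\neq 0$) would produce a solution of the extended system~\eqref{eq:mg+Ba} with $\mu\neq 0$, contradicting the hypothesis that all solutions have $\mu=0$. Write $\nabla_i v_j=c\,g_{ij}+\omega_{ij}$, where $\omega$ is the antisymmetric part of $\nabla v$. If $\omega\equiv 0$, then the dual $1$-form of $v$ is closed, hence (as $M$ is simply connected) equal to $df$ with $f_{,ij}=c\,g_{ij}$; then $a'_{ij}:=f_{,i}f_{,j}$ satisfies $a'_{ij,k}=c\,f_{,j}g_{ik}+c\,f_{,i}g_{jk}$, so it solves~\eqref{basic} with $\lambda'_i=c\,f_{,i}$, and $\lambda'_{i,j}=c^2 g_{ij}$ yields $\mu'=c^2\neq 0$ — the desired contradiction. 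Hence in the gradient case $c=0$.

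The main obstacle is the rotational part $\omega$, which need not vanish for a general homothety. To control it I would use that $v$ is affine, so $\Lie_v\lambda$ is again parallel; since $\R\lambda$ is the only line of essential parallel $1$-forms, this forces $\Lie_v\lambda=\kappa\lambda$ for a constant $\kappa$, equivalently $\omega_{ij}\lambda^j=(\kappa-c)\lambda_i$. Combining this with the affine identity expressing $\nabla\omega$ through the curvature and $v$, with the strong degeneracy $\lambda_i R^i{}_{jk\ell}=0$ coming from the parallel null field $\lambda^\sharp$, and with the block normal form of Lemma~\ref{thm:rho_1,rho_2} (in which $g$ splits off a $2$-dimensional null block carrying $\lambda=\tfrac12 df$ and Riemannian blocks conformally rescaled by $(\lambda+C-\rho_s)^2$), the goal is to conclude that $\omega\equiv 0$, i.e. that the dual form of $v$ is closed; the gradient case above then gives $c=0$. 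Establishing $\omega\equiv 0$ — ruling out a genuinely rotating proper homothety compatible with this rigid isotropic structure without creating a $\mu\neq 0$ solution — is the step that requires the most care, and once it is done every homothety of $g$ is an isometry.
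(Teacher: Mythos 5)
Your gradient case is correct and is a genuinely nice observation: if $\nabla_j v_i = c\,g_{ij}$, then (simple connectedness) $v_i=f_{,i}$ and the triple $\left(f_{,i}f_{,j},\ c\,f_{,i},\ c^2\right)$ solves \eqref{eq:mg} with $\mu=c^2\neq 0$, contradicting the hypothesis; your description of $Sol(g)=\langle g,\ \lambda\otimes\lambda,\ a\rangle$ also matches what the paper establishes in the proof of Lemma~\ref{thm:B=0_isotropic}. But this easy half is not where the content of the corollary lies. The whole difficulty is a homothety whose skew part $\omega$ is nonzero, and there your text is a plan rather than a proof: you state yourself that establishing $\omega\equiv 0$ ``is the step that requires the most care'' and you give no argument for it. So the proposal has a genuine gap: the main case is missing.

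Worse, $\omega\equiv 0$ is the wrong intermediate target, so the gap cannot be closed along the proposed route. For fixed $c$, the homotheties with $\Lie_v g=2c\,g$ form an affine space modelled on $iso(g)$: if $v$ is a proper homothety and $K$ is Killing, then $v+K$ is a proper homothety with the same $c$ but skew part $\omega_v+\omega_K$. Under the hypotheses of the corollary the metric may well admit Killing fields whose dual $1$-forms are not closed --- in the normal form \eqref{eq:block-diagG}, any Killing field of a Riemannian block $g_s$ lifts to a Killing field of $g$, and its $g$-dual acquires the non-constant conformal factor $(\lambda+C-\rho_s)^2$, hence is not closed --- and for such $g$ the statement ``every proper homothety has $\omega\equiv 0$'' is simply false. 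At best one could try to show that a proper homothety can be corrected by a Killing field to a \emph{gradient} proper homothety, and nothing in your outline addresses how to extract that gradient part. The paper's proof runs on an entirely different mechanism, which you cite but never actually use: working in the Splitting Lemma coordinates of Lemma~\ref{thm:rho_1,rho_2}, the flow of a homothety preserves the pair $(g,Sol(g))$ and hence the eigenvalue foliations of $a$, so the homothety acquires the block form \eqref{(2)}; it then restricts to a homothety of each Riemannian block $g_s$ and acts affinely on the function $\lambda$; finally --- and this is the key point absent from your proposal --- the hypothesis that all solutions have $\mu=0$ forces, via Lemma~\ref{thm:rho_1,rho_2}, at least two blocks with \emph{distinct constant} eigenvalues $\rho_s\neq\rho_{s'}$, and comparing the factors $(\lambda+C-\rho_s)^2$ and $(\lambda+C-\rho_{s'})^2$ along the flow is what pins down the action on $\lambda$ and the block coefficients $\alpha_s$. (I note in passing that even the paper's own last step, passing from ``$\alpha_s=\alpha_{s'}$ and $\beta=0$'' to ``$v$ is Killing'', is extremely terse; but in any case it is this block-comparison argument, not a pointwise analysis of $\nabla v=c\,g+\omega$, that carries the proof.)
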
 
\begin{proof} We will work in a neighborhood of a generic point and  consider the coordinates  $$(x_1,x_2,x^{(2)}_1,\dots,x^{(2)}_{k_2},\dots,x^{(m)}_1,\dots,x^{(m)}_{k_m})$$  as above such that $g$ has the form  \eqref{eq:block-diagG}. By Lemma \ref{thm:rho_1,rho_2}, $m\ge 2$. 

  Clearly, any homothety (and therefore any Killing) vector field has the form 
\begin{equation}\label{(2)}v^i = \left(v_1(x_1, x_2), v_2(x_1, x_2), v_1^{(2)}(x^{(2)}),\dots, v_{k_2}^{(2)}(x^{(2)}),\dots,   v_1^{(m)}(x^{(m)}),\dots, v_{k_m}^{(m)}(x^{(m)})\right).\end{equation}

Indeed, any homothety sends $g$ to $\const \cdot g$ for $\const \not= 0$ and the solution $a_{ij}$ to a nontrivial solution, that is to a tensor of the form $C_1 a_{ij} +C_2g_{ij} +C_3\lambda_i\lambda_j$ (for $C_1 \not= 0$). The pair
$(\const\cdot  g,C_1 a + C_2 g + C_ 3\lambda\otimes   \lambda )$
 determines the foliations corresponding to the coordinate plaques
$(x_1, x_2)$,  $x^{(2)}$,\dots,  $x^{(m)}$ uniquely, since  the foliations do not depend on the choice of constants $const \not = 0$, $C_1 \not= 0$, $C_2$ and  $C_3$. 
Then, any homothety preserves the foliations and therefore  has the form \eqref{(2)}.

We take any $s=2,...,m$ and consider the coordinate  plaque of the coordinates 
$(x^{(s)}_{1},\dots, x^{(s)}_{k_s})$, i.e., 
 the $k_s$-dimensional submanifold given by the equations 
 $$x_1= \const_1, x_2= \const_2,\dots, x^{(s-1)}_{k_{s-1}}= \const^{(s-1)}_{k_{s-1}}, x^{(s+1)}_1 =\const^{(s+1)}_1,\dots, x^{(m)}_{k_m} =\const^{(m)}_{k_m}$$ with the restriction of the metric $g$ to it which in view of  \eqref{eq:block-diagG} has the form 
$(\lambda+ C - \rho_s)^2 g_s$, and the orthogonal projection of $v^i $ to this plaque which has the form 
$v^i_{restr}:= \left( v_1^{(s)}(x^{(s)}),\dots, v_{k_s}^{(s)}(x^{(s)})\right).$ Since the vector field $v^i$ is homothety, the vector field $v^i_{restr}$ is also a homothety (possibly, with another coefficient) so the 
pullback $\phi_t^*g_s$ w.r.t. to the flow of the vector field is equal to $ \exp(\alpha_s t) g_s.$
For another $s$ (which we denote by $s'$), by repeating  the arguments, 
  we also   have $\phi_t^*g_{s'}= \exp(\alpha_{s'} t) g_{s'}.$ Now, since  the homothety  sends the (unique, up to a factor) 
   covariantly constant 1-form $\lambda_i$   to $\beta\cdot \lambda_i$, we have that the evolution of the function $\lambda$ along a  trajectory of the flow of $v^i$ is given by $\lambda(t):= \lambda(\phi_t(p))= \beta \lambda(p) + \gamma$.    All together, we obtain  
   \begin{equation} \label{tttt}
   \phi_t^*g_{s'}= \exp(\alpha_s t) g_{s'} , \ \phi_t^*g_{s'}= \exp(\alpha_{s'} t) g_{s'}, \ \lambda(t)= \beta \lambda(p) + \gamma.\end{equation}  Combining this with the assumption that 
    the flow  of $v^i$ acts by homotheties, we obtain 
    $$(\beta \lambda(p) + \gamma - \rho_s)^2 \exp(\alpha_s t) 
    = (\beta \lambda(p) + \gamma - \rho_{s'})^2 \exp(\alpha_{s'} t). 
   $$
   Then, $ \alpha_s= \alpha_{s'}$ and  $\beta= 0$ which implies that the vector field $v^i$ is a Killing vector field.  
\end{proof}
\subsection{Proof of  Lemma~\ref{thm:B=0_isotropic}.}

  Since  $g$ admits at least one metric which is geodesically
  equivalent, but not affinely equivalent to $g$, there exists at
  least one solution $(a_{ij},\lambda_i)$ with nonzero vector field
  $\lambda_i$.

  Let us first show that if $a_{ij}$ and $\hat a_{ij}$ are solutions
  of~\eqref{eq:mg} with nonzero vector fields $\lambda_i$ and $\hat
  \lambda_i$ respectively, then there exists a constant $C$, such that
  $C a_{ij}- \hat a_{ij}$ is parallel.

  We consider the space $S=span \{\lambda_i,\hat \lambda_i\}$. By
  Lemma~\ref{thm:v_perp_l}, $S$ is totally isotropic. Since $g$ has
  lorentzian signature, $\dim S$ is at most 1. Thus, there exists $C$
  such that $C \lambda_i=\hat \lambda_i$. Since both vector fields are
  parallel  on $M$, $C$ is constant. Then,
  $$(C a_{ij}-\hat a_{ij})_{,k}=(C \lambda_i-\hat \lambda_i)g_{jk}
  +(C \lambda_j-\hat \lambda_j)g_{ik}=0$$
 so   $C a_{ij}-\hat a_{ij}$ is parallel. 
  
  Thus, the space of solutions of the extended system~\eqref{eq:mg+Ba}
  is  the direct sum of the space $Par(g)$ of parallel  symmetric $(0,2)$-tensor fields and one-dimensional space
  $\{C\cdot a_{ij}\}$. Then, $D(g)=\dim Par(g)+1.$

  In order to calculate  $\dim Par(g)$ we will use essentially the
  same construction as in Theorem~\ref{cov1}.

  Consider the decomposition of a tangent space $T_p M$ into the
  direct sum of subspaces, invariant with respect to the action of the
  holonomy group $Hol_p(M)$ 
  \begin{gather}
    \label{eq:B=0-decomp}
    T_p M=V_0\oplus V_1\oplus\dots \oplus V_\ell
  \end{gather}
  where $V_0$ is maximal nondegenerate flat subspace and $V_s$, $s>0$, are
  indecomposable nondegenerate subspaces. We denote the restriction of
  $g$ to $V_s$ by $g_s$.

  All parallel symmetric  tensor fields on the Lorentzian
  manifold $M$ are given by the formula
  \begin{equation}
    A= \sum_{i,j=1}^k c_{ij} \tau_i\otimes\tau_j+
    \sum_{i=1}^\ell C_i g_i,
  \end{equation} 
  where $c_{ij}$ is a constant symmetric matrix, $C_1,...,C_\ell$ are
  constants and $\tau_i$ are the basis in the space of all parallel  $1$-forms on $M$.
  Then, 
  $\dim Par(g)=\frac{k(k+1)}{2}+\ell. $

  In order to  complete  the proof we need to show that $\ell$ and $k$ satisfy 
  $1\leq k\leq n-3$ and $1\leq \ell\leq
  \lfloor\frac{n-k}{3}\rfloor$.

  Recall that in the case of cone manifold, the estimation uses the existence of solutions of \eqref{eq:hom}. In our case the whole manifold $M$ does not    admit a solution of \eqref{eq:hom}, but, as we show below, each
   indecomposable Riemannian block does admit a solution of \eqref{eq:hom}.
  
  Since $g$ has lorentzian signature, one of the metrics $g_s$,$s\geq 0$
  is a Lorentzian metric and all other metrics  are Riemannian.

  Suppose $g_0$ has  lorentzian signature. Since $\lambda_i$ is parallel, projection of $\lambda_i$ to each block is parallel. But indecomposable Riemannian blocks do not admit
  parallel  vector fields. Thus, $\lambda^i\in V_0$. On the
  other hand, we have shown that every parallel  vector
  field on $M$ is orthogonal to $\lambda_i$. Thus, $\lambda_i \in \ker
  g_0$. Then, $g_0$ is degenerate on $V_0$, which is a
  contradiction. Therefore, $g_0$ is Riemannian.

  Then, without loss of generality, we can assume that $(V_1,g_1)$ is
  Lorentzian indecomposable block and $\lambda_i\in V_1$.

   Let us now denote by   $M_s$ the integral submanifolds of the distribution generated by $V_s$; the restriction of the metric $g$ to $M_s$ will be denoted     by $\overset{(s)}g$. Our next goal is to construct a 1-form $\overset{s}{u}_i$  on each $M_s$ such that $\overset{s}{u}_{i,j}=\overset{(s)}{g}_{ij}. $
  For $s=0$ the existence of such   $\overset{0}{u}_{i}$ is trivial, since $\overset{(0)}g$ is flat. 
  
   We take  arbitrary $s>1$ and denote by $P^i_j$ the orthogonal
  projector of $T_p M$ to  $V_s$. Note that $P^i_j$ is  parallel. 
  For any vector field   $v^i$  on $M$ we consider the vector field  
  $u^i=P^j_i a_{jk} v^k$ on $M_s$.

  Its  covariant derivative   with respect to
  the index $k$ such that $\partial_k \in V_s$ is given by 
  $$
    u^i_{\ ,k}
    =(P^i_j a^j_l v^l)_{,k}=P^i_j a^j_{l,k} v^l+ P^i_j a^j_l
    v^l_{\ ,k}=\underbrace{P^i_j \lambda^j}_{=0}g_{lk} v^l+P^i_j \lambda_l 
    \delta^j_k v^l+ P^i_j a^j_l \underbrace{v^l_{\ ,k}}_{=0}
    =P^i_k \lambda_l v^l
  $$
implying that,  on $V_s$, we have 
  $u_{i,j}=(\lambda_l v^l) \overset{(s)}g_{ij}.$
  
  Since $V_1$ is  $g$-nondegenerate, there exists a
  vector field $v^i$ tangent to $M_1$  such that $\lambda_i v^i=1$. Then, the
  corresponding 1-form $u_i$ on $M_s$ satisfies the 
  property 
 $
    u_{i,j}=g_{i,j}
  $    as we what. This in particular implies that   every  block
  $V_s$ has dimension at least $3$, see Lemma~\ref{thm:Rv0}.

  Next we  consider  the  $(M_1, \overset{(1)}g)$. 
  We first  show that $(M_1, g_1)$ satisfies the
  assumptions of Corollary~\ref{thm:lorentz_dim}.

  We again  denote the orthogonal projector onto $V_1$ by $P$ and define the
  endomorphism  $a'$ on $V_1$   as a restriction of $a^i_j$ to  $V_1$:
  $a'=P\cdot a.$

  Since $P$ is  parallel  and $\lambda^i\in V_1$,
  $a'$ is a solution of~\eqref{basic} with respect to the  metric $\overset{(1)}g$. 
  Indeed, we take  indices $i,j,k$ such that $\partial_i, \partial_j, \partial_k\in TM_1$   and  calculate
  \begin{multline}
    a'_{ij,k}=(g_{ir}P^r_s a^s_j)_{,k}=g_{ir}P^r_s a^s_{j,k}=\\
    =g_{ir} P^r_s (\lambda^s g_{jk}+\lambda_j
    \delta^s_k)=g_{ir}\lambda^r g_{jk}+g_{ir}P^r_s \lambda_j
    \delta^s_k=\lambda_i g_{jk}+\lambda_j g_{ik}.
  \end{multline}

 We see that  $g_1$ admits at least three linearly independent
  solutions of the geodesic equivalence equations: $\const \cdot g_1$,
  $\lambda_i \lambda_j$ and $a'_{ij}$.  Thus, $g_1$ satisfies the
  conditions of the Theorem~\ref{thm:mg+Ba} and there exists the  unique
  constant $B(g_1)$ defined by the extended system~\eqref{eq:mg+Ba}.

  Since solution $(a',\lambda_i)$ with parallel  vector
  $\lambda_i$ and $\mu=0$ satisfies the extended
  system~\eqref{eq:mg+Ba} for $g_1$, from the second equation we
  obtain $B(g_1)=0$.
  
  Let us now show that $g_1$ does not admit a solution with $\mu
  \neq 0$.
   Assume  $(\tilde a,\tilde \lambda_i,\tilde \mu_i)$
 is  the solution of the extended system on $M_1$ with 
  $\tilde \mu\neq 0$. We can think 
  $\tilde \mu =1$.  Then, $\tilde\lambda_i$ is a 1-form  on
  $M_1$ such that
  $    \tilde \lambda_{i,j}=\tilde \mu g_{ij}=g_{ij}.$
  Let us now consider the sum 
  \begin{gather}
    \label{eq:cone-l}
    \xi_i=\overset{(0)}{u}_i+\tilde
    \lambda_i +\sum_{s=2}^\ell \overset{(s)}{u}_i, 
  \end{gather} where $\overset{(0)}{u_i}$ is a 1-form on  such
  that $\overset{(s)}{u}_{i,j}=\overset{s}g_{ij}$;  the existence of such 1-forms is proved above.  We evidently have 
 $\xi_{i,j}=g_{ij}$.
  We now consider  the symmetric $(0,2)$-tensor field
  $A_{ij}=\xi_i \xi_j$. 
  We have
  $$A_{ij,k}=\xi_i \xi_{j,k}+\xi_j \xi_{i,k}=\xi_i g_{jk}+ \xi_j g_{ik}.$$
  We see that $A_{ij}$ is a solution of \eqref{basic} with $\lambda_i= \xi_i$.
    Since $\xi_{i,j}=g_{ij}$,   the corresponding  $\mu$  equals  $1$. We obtain a 
  contradiction  with the assumption  that all solutions of the extended
  system  have $\mu=0$.

  Thus, we have shown that metric $g_1$ on the Lorentzian block
  does not admit solutions with
  $\mu\neq 0$.   Then, Lorentzian manifold $(M_1,g_1)$ satisfies the conditions of the
  Corollary~\ref{thm:lorentz_dim}. Thus, $\dim M_1 \geq 6$.
We therefore have (the number below $V_i$ corresponds to their dimensions)  
  $$
    T_p M=\underbrace{V_0}_{k_0}\oplus \underbrace{V_1}_{\geq 6}\oplus
    \underbrace{V_2}_{\geq 3}\oplus\dots\oplus \underbrace{V_\ell}_{\geq 3} \textrm{ implying } 
  $$
  $$
    \dim T_p M=k_0+\dim V_1+\dim V_2+\dots +\dim V_\ell
    \geq k_0 +6 +3(\ell-1)=3\ell+ k_0+3.
  $$  
  Thus, $1\leq \ell\leq \lfloor\frac{n-k_0}{3}\rfloor-1,$ where $k_0$
  is the dimension of the flat block $V_0$.
  Since the basis of  the   $1$-forms on $T_pM$ invariant w.r.t. the holonomy group is given by
  $k_0$ basis 1-forms  on the flat block $V_0$ and the 1-form $\lambda_i$ on $V_1$, we have $k=k_0+1$. Since the dimension of $V_1$ is at least 6, $0\leq
  k_0\leq n-3$.
 Therefore, $1\leq \ell\leq \lfloor\frac{n+1-k}{3}\rfloor-1,$ where
  $1\leq k\leq n-2$.
  Thus, we obtained the following list of values for the degree of
mobility of $g$:
  \begin{equation}\label{-11} D(g)=\dim Par(g)+1 =\tfrac{k(k+1)}{2}+\ell +1=\tfrac{k(k+1)}{2}+\ell' \textrm{
  where  } 2\leq \ell'\leq \lfloor\tfrac{n+1-k}{3}\rfloor.\end{equation}
   Lemma~\ref{thm:B=0_isotropic} is proved.

\section{Proof of the realization Theorem \ref{thm:realization}.}

\label{sec:realization_Bneq0}
In this section we construct an example of an $n$-dimensional
Lorentzian metric $g$  admitting a geodesically equivalent metric $\bar g$
that  is not affinely equivalent to $g$,  such that $D(g)= \frac{k(k+1)}{2}+\ell\geq
  2$, where $k$ and $\ell$ are as in Theorem \ref{thm:realization}.
  Essentially the same construction could be used for metrics of arbitrary 
  signature.  In  Section \ref{proof:apl} it will be explained 
  that for these metrics the number  $\dim proj (g) - \dim iso (g) $ equals $D(g)-1$ which implies that this example also shows that all the possible values of $\dim proj (g) - \dim iso (g) $ given by Theorem \ref{th:apl} can be achieved.

  Actually, we will  construct a $n+1$-dimensional cone manifold
  $(\wt M, \hat g)$  admitting  $\frac{k(k+1)}{2}+\ell$-dimensional space of 
   parallel symmetric $(0,2)$-tensor fields.  
  The metric $g$ 
  is then the restriction of $\hat g$ to the hypersurface $\{v =\const \}$, where $v$ is the function satisfying \eqref{eq:hom}.

  The manifold 
   $(\wt M, \hat g) $  will be the direct sum
   $$
   (\wt M, \hat g)= (\wt M_0, \hat g_0)+ ... + (\wt M_\ell, \hat g_\ell),  
   $$
  where  $(\wt M_0, \hat g_0)$ is the standard   $(\mathbb{R}^k, g_{euclidean})$ (in the case $k=0$ we think that $M_0$ is a point).
   Clearly, $(\wt M_0, \hat g_0)$ is a cone manifold  over the
  $(k-1)$-dimensional sphere with the standard metric.

  Since $\ell \leq\lfloor\frac{n-k+1}{3}\rfloor$, there exist numbers
  $k_1,\dots k_\ell$ such that $k_i\geq 3$ and $k_1+\dots+k_\ell=n-k+1$. For
  all $1\leq i\leq \ell$, as the manifold $(\wt M_i, \hat g_i)$ we take  the $k_i$-dimensional cone
  manifold over $(\R^{k_i-1},g_i)$,
  where $g_1$ is the flat metric of lorentzian signature and all $g_i,i\geq 2$ are
  euclidean (flat)  metrics. It is an easy exercise to prove   that  the manifolds $(\wt M_s, \hat g_s)$, $s\ge 1, $ do not admit a parallel symmetric $(0,2)$ tensor other than $\const \hat g_{s}$: one of the way to do this exercise is to calculate the curvature tensor $R^i_{\ jkm}$ and its  covariant 
  derivative $R^i_{\ jkm,s}$, which is possible since the metrics $g_i, i\ge 1$,   are  explicitly given by simple formulas, and to check that at each point $p\in \wt M_i$ the endomoprphisms $R^i_{\ j k m} X^k Y^m$ and  
  $R^i_{\ j k m , s } X^k Y^m Z^s$ generate the whole $so(T_p\wt M_i, \hat g_i)$.  
  Actually,  in the Riemannian case, i.e., for 
  $\wt M_s, \ s\ge 2$,  it follows  from 
   \cite[Theorem 4.1]{Cortes2009}.

  Evidently, 
  $\wt M=\wt M_0\times \wt M_1\times\dots\times \wt M_\ell$ has lorentzian signature.  It is a cone manifold by 
   Lemma~\ref{thm:glue}, so there exists a function $v$ satisfying $\eqref{eq:hom}$. As we explained in Section  \ref{sec:Bneq0}, the parallel symmetric $(0,2)$-tensor fields 
    on  $\wt M$ are in one-to-one correspondence with the solutions of \eqref{basic} for the restriction of the metric  to the ($n$-dimensional)
     hypersurface  $\{v=\const\}$, where $v$ is the function satisfying \eqref{eq:hom}, 
      so its dimension is the degree of mobility of $g$. 
    Combining  Theorem \ref{cov} and \ref{cov1}, we see that the 
     space of   parallel symmetric $(0,2)$-tensor fields on $(\wt M, \hat g)$   is precisely $\frac{k(k+1)}{2}+\ell$-dimensional. Theorem \ref{thm:realization} is proved. 
 
\section{Proof of  Theorem~\ref{th:apl}.} \label{proof:apl}

\subsection{Plan of the proof.} 
Our proof of  Theorem~\ref{th:apl} contains two main  parts: ``generic case'', which corresponds to the metrics with $D(g) \ge 3$ and $B\ne 0$, will be handled in Section \ref{gen}, 
 and ``special case'', when $B= 0$, will be handled in Section  \ref{spec}. 
In both cases, the upper bound  for $dim(proj)- dim(iso)$ follows from   Lemma~\ref{thm:proj/hom} in Section \ref{cod}.

We will always assume that our manifold $(M,g)$ is connected, simply-connected, has dimension $\ge 3$, and that there exists a metric $\bar g$ that is geodesically equivalent, but not affinely equivalent to $g$.

\subsection{Codimension of the space of homothety vector fields in the space of projective vector fields.}
 \label{cod} 
Recall that  a vector field $v$ is \emph{projective,} if its local
flow acts by projective transformations, i.e., takes unparametrized
geodesics to geodesics.
It is  well known  (see for example~\cite{s1,KM}) that the vector
field $v$ is projective if and only if the $(0,2)$-tensor field
$a^v$  given by the formula
\begin{equation}
  \label{apl}
  a^v:=  {\mathcal{L}}_v g - \tfrac{1}{n+1} \tr( g^{-1}
  {\mathcal{L}}_v g) \cdot g 
\end{equation}
satisfies the equation~\eqref{basic}. Here by $\mathcal{L}_v$ we
denote the Lie derivative along $v$.

\begin{lem}
  \label{thm:proj/hom}
  Let $(M^n, g)$ be a pseudo-Riemannian connected manifold  on $n$-dimensional manifold.
  
  Then,
  $\dim proj(g) - \dim
  hom (g)\leq D(g)-1,$ where $hom(g)$ denotes 
   be the space of homothety  (i.e., such that $\mathcal{L}_v = \const \cdot g$)
   vector fields. 
\end{lem}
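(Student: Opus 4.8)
The plan is to analyse the linear map
\[
\Phi\colon proj(g)\longrightarrow Sol(g),\qquad \Phi(v)=a^v,
\]
where $a^v$ is the tensor defined in~\eqref{apl} and $Sol(g)$ denotes the $D(g)$-dimensional space of solutions of~\eqref{basic}. By the characterisation of projective vector fields recalled above, $\Phi$ is well defined, and the estimate we want is equivalent, via the rank--nullity theorem, to a statement about $\ker\Phi$ and $\Im\Phi$. First I would identify the kernel: if $a^v=0$, then $\mathcal{L}_v g=\tfrac{1}{n+1}\tr(g^{-1}\mathcal{L}_v g)\,g$, and taking the $g$-trace of this identity forces $\tr(g^{-1}\mathcal{L}_v g)=0$ (here the specific coefficient $\tfrac{1}{n+1}$, rather than $\tfrac1n$, is exactly what makes the trace vanish), hence $\mathcal{L}_v g=0$. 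Thus $\ker\Phi=iso(g)$ and $\dim proj(g)=\dim iso(g)+\dim\Im\Phi$.

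Next I would compute $\Phi$ on homotheties. For $v\in hom(g)$ we have $\mathcal{L}_v g=\const\cdot g$, so $a^v=\tfrac{\const}{n+1}\,g$; hence $\Phi(hom(g))\subseteq \R\,g$, and since $iso(g)\subseteq hom(g)$ and $\ker\Phi=iso(g)$, also $\dim hom(g)=\dim iso(g)+\dim\Phi(hom(g))$. Subtracting the two dimension formulas gives
\[
\dim proj(g)-\dim hom(g)=\dim\Im\Phi-\dim\Phi(hom(g)),
\]
so it remains to bound the right-hand side by $D(g)-1=\dim Sol(g)-1$. The crucial observation is a dichotomy according to whether $g\in\Im\Phi$: solving $a^v=g$ pointwise shows that $\mathcal{L}_v g$ is a function multiple of $g$, and taking the $g$-trace shows this function is the constant $n+1$; therefore $g\in\Im\Phi$ holds if and only if $g$ admits a proper homothety (one with $\mathcal{L}_v g=\const\cdot g$, $\const\neq0$).

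Finally I would split into the two cases. If a proper homothety exists, then $g\in\Im\Phi$ and $\Phi(hom(g))=\R\,g$ is one-dimensional, so the right-hand side equals $\dim\Im\Phi-1\le \dim Sol(g)-1$. If no proper homothety exists, then $hom(g)=iso(g)$, whence $\Phi(hom(g))=\Phi(iso(g))=0$, while $g\notin\Im\Phi$ forces $\Im\Phi$ to be a proper subspace of $Sol(g)$ (recall $g\in Sol(g)$ always), giving $\dim\Im\Phi\le\dim Sol(g)-1$; in both cases the desired bound follows. I expect the only genuinely delicate point to be the clean identification of $\ker\Phi$ with $iso(g)$ and of the preimage of $\R\,g$ with $hom(g)$, both of which hinge on the normalising constant $\tfrac{1}{n+1}$ in~\eqref{apl}; everything else is bookkeeping with the rank--nullity theorem.
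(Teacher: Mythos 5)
Your proof is correct, and it runs on the same engine as the paper's: the linear map $v \mapsto a^v$ from \eqref{apl} into the solution space of \eqref{basic}, the rank--nullity theorem, and the same two trace computations (the trace of $g^{-1}\mathcal{L}_v g$ vanishes when $a^v$ is a multiple of $g$, and $a^v = \tfrac{c}{n+1}\,g$ for a homothety). The difference is in the bookkeeping. The paper maps into the quotient $\widetilde{Sol}(g) = Sol(g)/\{\const\cdot g\}$, whose dimension is already $D(g)-1$; with multiples of $g$ collapsed to zero, the kernel of that map is exactly $hom(g)$, and the inequality falls out of rank--nullity in one stroke, with no case distinction. You instead map into $Sol(g)$ itself, so your kernel is the smaller space $iso(g)$, and you must recover the missing dimension through the dichotomy on whether $g \in \Im\Phi$, which you correctly identify with the existence of a proper homothety: if one exists, $\Phi(hom(g)) = \R\, g$ accounts for one dimension of the image; if none exists, then $g \in Sol(g)\setminus\Im\Phi$ forces $\Im\Phi$ to be a proper subspace. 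Both branches close, so nothing is missing; your route is an unfolded version of the paper's, and your equivalence (proper homothety exists if and only if $g\in\Im\Phi$) is a correct and slightly finer observation than the paper records, bought at the cost of a case analysis that the quotient construction renders unnecessary.
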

\begin{proof}
  We denote by $Sol(g)$ the space of solutions of~\eqref{basic}
  corresponding to the metric $g$. Then by definition $\dim
  Sol(g)=D(g)$. Let $\widetilde{Sol}(g)$ be the quotient space
  $Sol(g)/\{const \cdot g\}$. Clearly, $\dim
  \widetilde{Sol}(g)=D(g)-1$.

  Let us consider the linear map $\phi:proj(g)\to \wtSol$, which maps
  each projective vector field to the corresponding equivalence class
  of the soluion $a^v$ given by the formula~\eqref{apl}.
  We show that $\ker \phi=hom(g)=\{v\mid \Lie_v=\const\cdot g\}$.

  Suppose $\phi(v)=0$. Then, there exists a constant $c$ such that 
  $a^v=c\cdot g$. Therefore,
  \begin{gather}
    \label{apl2}
    a^v:= {\mathcal{L}}_v g - \tfrac{1}{n+1} \tr( g^{-1}
    {\mathcal{L}}_v g) \cdot g=c\cdot g.
  \end{gather} 
  We multiply both sides by $g^{-1}$ and take the trace  to obtain 
  \begin{gather}
    \tr\left({g^{-1}\mathcal{L}}_v g - \tfrac{1}{n+1} \tr( g^{-1}
      {\mathcal{L}}_v g) \cdot \Id\right) =\tr( c\cdot \Id).
  \end{gather}
  Then,  $\tr\left({g^{-1}\mathcal{L}}_v g\right)$
  is constant. We substitute it in~\eqref{apl2} to obtain
  $\Lie_v g= \const \cdot g$  implying  $v$ is a homothety.

  Now, for a homothety vector field $v$ we have 
  $\Lie_v g=k\cdot g$, where $k$ is a certain constant. Then
  $$ a^v={\mathcal{L}}_v g - \tfrac{1}{n+1} \tr( g^{-1}
  {\mathcal{L}}_v g) \cdot g=k \cdot g - \tfrac{n\, k}{n+1} \cdot
  g=\tfrac{k}{n+1}\, g$$
  so that  $\phi(v)=0$.
  Thus, $\ker \phi=hom(g)$.
  
  Applying  the dimension theorem to the linear map $\phi:
  proj(g)\to image(\phi)\subset \wtSol(g)$, we  obtain
  $\dim proj(g)= \dim \ker \phi + \dim image(\phi)= 1 + \dim image(\phi).$
  Since  $\dim image(\phi)\leq \dim \wtSol$, 
  we obtain 
  $\dim proj(g) - \dim hom(g)\leq D(g)-1$ as we claimed. 
\end{proof}

\subsection{Generic case with $B\neq 0$.} \label{gen} 

Now we suppose that $D(g)\geq 3$ and that $B\neq 0$.

We consider  the linear map  
  $\phi$ from the proof of  Lemma~\ref{thm:proj/hom}. We need to show that 
 $\ker \phi=iso(g)$ and
  $image(\phi)=\wtSol(g)$.

  Since $\ker \phi=hom(g)$, in order to show that  $\ker \phi=iso(g)$  it is sufficient to show that 
  every
  homothety vector field is, in fact, a killing vector field.

  As we explained in Section \ref{standard1}, 
    $B$ is the global invariant of the metric, i.e. at  every
  point $P\in M$  the constant $B$  from the system \eqref{eq:mg+Ba}  is the same and  there exists,even locally,    only one  such constant 
  $B$. 

  Let $\bar g=F(t)^\ast (g)$ be the pullback of a metric $g$ with
  respect to the local flow of $v$. If $v$ is a homothety, then $\bar
  g=k\cdot g$ for some constant $k$. Then, as we explained in  Lemma~\ref{thm:two_cases}, 
  the constant $\bar B$ of
  the metric $\bar g$ is equal to $\bar b = \bar B(g)= \frac{1}{k}B$. But the metric $\bar g$ is isometric to the metric $g$, so it must have the same value of constant $B$. Thus, $k=1$, and the homothety vector field is in fact a Killing vector field as we claimed.

 Let us now show that  the image of $\phi$ is the
  whole space $\wtSol(g)$.  Choose the arbitrary $(a,\lambda,\mu)$
  from $Sol$ and consider the vector field $u^i=\lambda^i$. Then
  \begin{gather*}
    a^u_{ij}:= {\mathcal{L}}_u g_{ij} - \tfrac{1}{n+1} ( g^{pq}
    {\mathcal{L}}_u g_{pq}) \cdot g_{ij}=\\
    =\lambda_{i,j}+\lambda_{j,i}-\tfrac{1}{n+1} g^{pq}
    (\lambda_{p,q}+\lambda_{q,p}) \cdot g_{ij}=\\
    =2 \lambda_{i,j} - \tfrac{2}{n+1} g^{pq}
    \lambda_{p,q} g_{ij}=\\
    =2 (\mu g_{ij}+ B a_{ij})-\tfrac{2}{n+1} g^{pq} (\mu g_{pq} +B
    a_{pq}) g_{ij}= 2 B a_{ij} + \const\cdot g_{ij}.
  \end{gather*} 
  We see that, up to the an addition of $\const\cdot g_{ij}$,
  the solution $a^u$ is equal to  $2 B a_{ij}$. Now, we put
  $v^i=\tfrac{1}{2 B} \lambda^i$ and obtain
  $\phi(v)=\left[a\right]$, where with square brackets we
  denote the procedure of taking quotient in  $\wtSol$. Therefore, $image(\phi)=\wtSol$.
Finally,  
  $\dim proj - \dim iso = D(g) -1$
  as we claimed.

\subsection{Case $B=0$.} \label{spec} 

\subsubsection{ Assume there exists a solution $(a, \lambda, \mu)$ with $\mu\ne 0$. }
This case can be reduced to the case  $B\ne 0$ using the methods from  Section \ref{sec:zeroB}.  Indeed, in any connected simply-connected open subset of $M$ with compact closure we can find a metric $\bar g$ of the same signature  that is geodesically equivalent to $g$ and has $\bar B =B(\bar g)\ne 0$. Then, the restriction of the metric $g$ to
this subset has $ \dim prog(g)-\dim iso(g)$ as in Theorem \ref{th:apl}. Theorem  \ref{th:apl} follows then from the following

\begin{lem}
  \label{thm:noncompact-proj-iso}
  Let $(M,g)$ be a connected pseudo-Riemannian manifold and suppose
  $M=\cup_{s=1}^{\infty}M_s$, where $M_s$ are open connected subsets in
  $M$ and $M_s\subset M_{s+1}$. Denote by $g_s$ the restriction of $g$
  to $M_s$. Then, there exists $s$ such that for every $s'>s$
  $$\dim proj(g)=\dim proj( g_{s'})\quad \mbox{and}\quad \dim iso(g)=\dim iso(g_{s'})$$.
\end{lem}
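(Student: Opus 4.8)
The plan is to repeat, almost verbatim, the argument of Lemma~\ref{thm:noncompact}, now applied separately to the two linear spaces $proj$ and $iso$ in place of $Sol$. The three ingredients I will need are: (i) restricting a projective (respectively Killing) vector field to a smaller open set again yields a projective (respectively Killing) field, which is immediate since both defining conditions --- equation~\eqref{apl} together with~\eqref{basic}, and $\Lie_v g=0$ --- are local; (ii) the restriction map is injective; and (iii) once the dimensions have stabilized, every field on the smaller set extends to the larger one.

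First I would define, for $s'<s$, the restriction maps $proj(g_s)\to proj(g_{s'})$ and $iso(g_s)\to iso(g_{s'})$ sending $v$ to $v|_{M_{s'}}$. The essential point is their injectivity, i.e. that a projective (respectively Killing) vector field vanishing on a nonempty open subset of the connected manifold vanishes identically. This is where the \emph{finite type} of the equations enters: both the Killing equation and the projective equation~\eqref{apl} prolong to a closed linear first-order system on a finite-dimensional bundle (for Killing fields on $TM\oplus\Lambda^2T^*M$, for projective fields on a correspondingly larger jet bundle). A solution is therefore a section in the kernel of a linear connection, uniquely determined by its value at a single point by solving a linear ODE along paths; vanishing on an open set forces this value to vanish somewhere, hence the field vanishes on the whole connected manifold. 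Consequently both restriction maps have trivial kernel, so $\dim proj(g_{s'})\geq\dim proj(g_s)\geq\dim proj(g)$ and likewise for $iso$.

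With injectivity in hand the numerics are identical to Lemma~\ref{thm:noncompact}: the sequences $\dim proj(g_1)\geq\dim proj(g_2)\geq\cdots$ and $\dim iso(g_1)\geq\dim iso(g_2)\geq\cdots$ are non-increasing sequences of natural numbers, bounded below (by restriction from all of $M$), hence eventually constant; let $s$ be an index past which both have stabilized. For any $s'\geq s$ the restriction $proj(g_{s'})\to proj(g_s)$ is an injective map between spaces of equal dimension, hence an isomorphism, so every projective field on $M_s$ extends uniquely to $M_{s'}$, and uniqueness makes these extensions mutually compatible. Defining the value of the extension at an arbitrary point $P\in M$ by choosing any $s'\geq s$ with $P$ in the interior of $M_{s'}$ and taking the local extension there --- well-defined by uniqueness --- produces a projective field on all of $M$ restricting to the given one on $M_s$. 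Thus $proj(g)\to proj(g_s)$ is onto, so $\dim proj(g)=\dim proj(g_s)$; the same argument word for word gives $\dim iso(g)=\dim iso(g_s)$.

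The main obstacle is ingredient (ii), the unique continuation statement, precisely because $g$ is only assumed $C^\infty$ and not real-analytic, so one cannot simply invoke analytic continuation. The resolution is exactly the finite-type/prolongation property of the projective and Killing equations noted above, which reduces unique continuation to uniqueness of solutions of a linear ODE along curves; everything else in the proof is a formal repetition of Lemma~\ref{thm:noncompact}.
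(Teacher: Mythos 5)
Your proposal is correct and follows exactly the route the paper intends: the paper leaves this proof to the reader, stating that one repeats the argument of Lemma~\ref{thm:noncompact} using only the fact that two projective (respectively Killing) vector fields coinciding on an open subset coincide on all of $M$. Your argument is precisely this repetition, with the added benefit that you justify the unique-continuation property (which the paper merely asserts) via the finite-type prolongation of the Killing and projective equations.
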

The proof of this  Lemma is similar to   the proof of Lemma~\ref{thm:noncompact}, and will be left to the reader. The only essential property of the projective and isometry vector fields that should 
 be used in the proof is that     if two projective (respectively, isometric) vector fields coincide
  on an open subset of $M$, they coincide everywhere on $M$.

\subsubsection{ Special case: there is no  $(a, \lambda, \mu)$ with $\mu\ne 0$. }

Let us first proof 
 \begin{equation} \label{thm:B=0_isotropic_apl} D(g)-2\leq \dim proj(g)-\dim iso(g)\leq D(g)-1.\end{equation}

  The upper bound $  \dim proj(g)-\dim iso(g)\leq D(g)-1$ follows from 
  Lemma~\ref{thm:proj/hom}, since in view of Corollary \ref{cor3} the metric admits no homothety vector field.   
  
 In order to  prove $D(g)-2\leq \dim proj(g)-\dim iso(g)$, we   construct $D(g) - 2$  projective (in fact, affine) vector fields such that no nontrivial 
   linear combination of these vector fields is  a killing vector field; in this construction we will use  
  the description of the space $Sol(g)$ we have
  obtained in Section~\ref{sec:B=0_isotropic}.
We consider the decomposition of a tangent space $T_p M$
  \begin{gather}
    T_p M=V_0\oplus V_1\oplus\dots \oplus V_\ell
  \end{gather}
  where $V_0$ is maximal nondegenerate flat subspace of dimension
  $k_0= k-1$ and $V_s,1\leq s\leq \ell$ are indecomposable nondegenerate
  subspaces. 
In Section \ref{sec:B=0_isotropic} we have shown that   $D(g)=\dim Par (g) +1=\frac{k(k+1)}{2}+\ell+1$, see \eqref{-11}. We will  show that $g$ admits at least $D(g)-2= \frac{k(k+1)}{2}+\ell-1$
  affine vector fields such that no nontrivial 
   linear combination of these vector fields is  a killing vector field.

  We consider a  basis $\overset{(1)}{\tau}_i,\dots
  \overset{(k)}{\tau}_i$ of the $k$-dimensional space of all
  parallel  $1$-forms on $M$. 
  For each $\overset{(a)}{\tau}_i$ there exists function
  $\overset{(a)}{\tau}$ on $M$, such that
  $ \overset{(a)}{\tau}_i= \overset{(a)}{\tau}_{,i}$.
For every  $ a,b= 1,\dots,  k$,  we consider   the 1-form 
    $$ \overset{(ab)}u_i=  \overset{(b)}{\tau}
  \overset{(a)}{\tau}_i+\overset{(a)}{\tau} \overset{(b)}{\tau}_i.$$
  Every  $\overset{(ab)}u{}^i$ is an affine (and therefore projective) vector field   on $M$ (in the sence its local flow preserves the Levi-Civita connection), 
because of
   the Lie derivative of $g$ is parallel: indeed,  $$  \overset{(ab)}u_{i,j}=  \overset{(b)}{\tau}_j
  \overset{(a)}{\tau}_i+\overset{(a)}{\tau}_j
  \overset{(b)}{\tau}_i.$$
 Besides, we have $\ell-1$ additional  affine vector fields generated by the cone vector fields $\overset{s}{v}{}^i$  on   $M_s$ (such that ${\overset{s}{v}}_{i,j} = \overset{(s)}{g}{}_{ij}$, where  $
  s\geq2$. The vector fields $\overset{s}{v}{}^i$ are affine   since the Lie derivative of $g$ with respect  to  $\overset{s}{v}{}^i$ is $2\overset{(s)}{g}{}_{ij}$ and is a parallel $(0,2)$-tensor.
  
  No nontrivial linear combination of the vector fields $\overset{(ab)}u{}^i$, $a\le b$, and of $\overset{s}{v}{}^i$  is a killing vector field.  Indeed, the Lie derivatives of
  $g$  w.r.t. these vector fields are  linearly independent.

  Thus,
  $D(g)-1\geq\dim proj(g)-\dim iso(g)\geq \frac{k(k+1)}{2}+l-1\geq
  D(g)-2$ as we claimed.

In order to  explain that   \eqref{thm:B=0_isotropic_apl} implies the remaining part of 
 Theorem~\ref{th:apl},  we combine it with 
 Lemma~\ref{thm:B=0_isotropic} to obtain 
$$\frac{k(k+1)}{2} + \ell'-2=D(g)-2 \leq \dim proj(g)-
\dim iso(g) \leq D(g)-1=\frac{k(k+1)}{2} + \ell'-1$$ for certain $k\in
\{0,1,...,n-2, n\}$ and $2 \le \ell' \le \lfloor \tfrac{n-k+1}{3}\rfloor$.

Thus, $\dim proj(g)-\dim iso(g)=\frac{k(k+1)}{2} + \ell-1$, where
$\ell$ is either $\ell'$ or $(\ell'-1)$. Then $1 \le \ell \le \lfloor \tfrac{n-k+1}{3}\rfloor$ as we claimed in Theorem ~\ref{th:apl}. Theorem ~\ref{th:apl} is proved.

\subsubsection*{Acknowledgements}
We  thank  Ch. Boubel and P. Mounoud for useful discussions and  for checking the paper  and 
Deutsche Forschungsgemeinschaft for partial financial support.


\begin{thebibliography}{99}


\bibitem{Cortes2009} D. V. Alekseevsky, V.Cortes, A. S. Galaev,
  T. Leistner, \emph{Cones over pseudo-Riemannian manifolds and their
    holonomy}, Journal für die reine und angewandte Mathematik,
  no. 635, pp. 23-69, 2009.

\weg{\bibitem{Aminova} A. V. Aminova, \emph{Pseudo-Riemannian manifolds with
  general geodesics}, Russian Math. Surveys {\bf 48}(1993), no.  2, 105--160.}
\bibitem{Bols} A. V. Bolsinov, V. S. Matveev, \emph{Splitting and
    gluing constructions for geodesically equivalent pseudo-Riemannian
    metrics}, Transactions of the American Mathematical Society {\bf 363}(2011), no 8, 4081--4107,   arXiv:0904.0535v1[math.DG].

\bibitem{duna}   R. L. Bryant, M. Dunajski, M. Eastwood,
\emph{ Metrisability of two-dimensional projective structures, }	  J. Diff. Geom.
 {\bf  83}(2009) no.3  465--500,   arXiv:0801.0300. 

\bibitem{Boubel} C. Boubel, \emph{The algebra of the parallel endomorphisms of a germ of pseudo-Riemannian metric}, arXiv:1207.6544 [math.DG]

\bibitem{Darboux} G. Darboux, \emph{Le\c{c}ons sur la th\'eorie
    g\'en\'erale des surfaces}, Vol. III, Chelsea Publishing, 1896.
    
\bibitem {Ga} S. Gallot, \emph{\'Equations diff\'erentielles caract\'eristiques de la sph\`ere}, Ann. scient. \'Ec. Norm. Sup. 4$^{e}$ s\'erie  {\bf 12}(1979) 235--267.


\bibitem{dissertation_kiosak} V. Kiosak,  \emph {Geodesic mappings of Riemannian spaces}, PhD dissertation, University of Olomouc, 2002.  
 
\bibitem{KM} V. Kiosak, V. S. Matveev, \emph {Proof of projective Lichnerowicz conjecture for pseudo-Riemannian metrics with degree of mobility greater than two}, Comm. Mat. Phys. {\bf 297}(2010), 401--426,  arXiv:0810.0994v4

\bibitem{KMMS} V. A. Kiosak, V. S. Matveev, J. Mikes, and
  I. G. Shandra, {\it On the Degree of Geodesic Mobility for Riemannian
    Metrics}, Math Notes {\bf 87}(2010), no. 4, 628--629.

\bibitem{konigs} G. Koenigs, {\it Sur les g\'eodesiques a int\'egrales
    quadratiques, } Note II from ``Lecons sur la th\'eorie
  g\'en\'erale des surfaces'', Chelsea Publishing, {\bf 4}(1896).



\bibitem{knebelman} M. S. Knebelman, { \it On groups of motion in related spaces,}
Amer. J. Math.,  {\bf 52}(1930), 280--282.

\bibitem{SK} G. I. Kruchkovich, A. S. Solodovnikov, \emph{Constant
    symmetric tensors in Riemannian space},
  Izv. Vyssh. Uchebn. Zaved. Mat., {\bf 1959}(1959), no. 3, 147--158.

\bibitem{kruglikov} B. Kruglikov, \emph{ Invariant characterization of Liouville metrics and polynomial integrals,} J.
Geom. Phys. {\bf 58}(2008), no. 8, 979--995. 

\bibitem{LancRodm} P. Lancaster, L. Rodman, \emph{Canonical forms for
    hermitian matrix pairs under strict equivalence and congruence},
  SIAM Review, {\bf 47}(2005), 407--443.

 \bibitem{Levi-Civita}
 T. Levi-Civita, {\it Sulle trasformazioni delle equazioni
 dinamiche}, Ann. di Mat., serie $2^a$, {\bf 24}(1896), 255--300.


\weg{\bibitem{randers} V. S. Matveev, {\it On the dimension of the group of projective transformations of closed Randers and Riemannian manifolds}, SIGMA, {\bf  8}(2012), 007, 4 pages. }

\bibitem{topology} V. S. Matveev, {\it Three-dimensional manifolds
    having metrics with the same geodesics,} Topology {\bf 42}(2003)
  no. 6, 1371-1395

\bibitem{beltrami_short} V. S. Matveev, \emph{Geometric explanation of
   the Beltrami theorem}, Int. J. Geom. Methods Mod. Phys. \textbf{3}(2006),
  no. 3, 623--629. 

\bibitem{Mounoud2010} V. S. Matveev, P. Mounoud,
  \emph{Gallot-Tanno Theorem for closed incomplete pseudo-Riemannian
    manifolds and applications}, Global. Anal. Geom., {\bf 38}(2010), no. 3,  259--271.

\bibitem{hall} V. S. Matveev, {\it Geodesically equivalent metrics in general relativity,} J. Geom. Phys. {\bf  62}(2012), no.  3,  675--691. 

\bibitem{mikes} J. Mikes, {\it Geodesic mappings of affine-connected
    and Riemannian spaces.  Geometry, 2,} J. Math. Sci. {\bf
    78}(1996), no. 3, 311--333.

\bibitem{Mounoud} P. Mounoud, \emph{On parallel and symmetric
    2-tensorfields on cones over pseudo-Riemannian manifolds},
  Communications in Analysis and Geometry, {\bf 20}(2012), no.  1,
  203--33.

\bibitem{DR} G. De Rham, {\em Sur la reductibilit\'e d'un espace de
    Riemann, } Comment. Math. Helv. {\bf 26}(1952), 328--344.

 

\bibitem{shandra} I. G. Shandra, 
{\it On the geodesic mobility of Riemannian spaces,} 
Math. Notes {\bf  68}(2000), no. 3--4, 528--532. 

\bibitem{Sinjukov} N. S.  Sinjukov,
 {\it Geodesic mappings of Riemannian spaces},  (in Russian)
``Nauka'', Moscow, 1979, MR0552022, Zbl 0637.53020.

\bibitem{s1} A. S. Solodovnikov, {\it Projective transformations of Riemannian spaces,}
 Uspehi Mat. Nauk (N.S.) {\bf 11}(1956), no. 4(70), 45--116.
 
 \bibitem{Weyl2}  H. Weyl, {\it Zur Infinitisimalgeometrie: Einordnung der projektiven
und der  konformen Auffasung,} Nachrichten von der K. Gesellschaft
der Wissenschaften zu G\"ottingen, Mathematisch-Physikalische
Klasse, 1921;
 ``Selecta Hermann Weyl'', Birkh\"auser Verlag,
   Basel und Stuttgart,
1956.
 

\end{thebibliography}
\end{document}